\newtheorem{assumption}[theorem]{Assumption}
\newcommand{\R}{\mathbb{R}} 
\newcommand{\N}{\mathbb{N}} 
\newcommand{\e}{\mathrm{e}} 
\DeclareMathOperator*{\argmin}{arg\,min}  
\DeclareMathOperator*{\argmax}{arg\,max}  
\DeclareMathOperator*{\supp}{supp} 
\DeclareMathOperator*{\inte}{int} 
\newcommand{\defeq}{\coloneqq} 
\newcommand{\textsm}[1]{\textup{\tiny{#1}}} 
\newcommand{\ML}{\textup{\tiny ML}} 
\newcommand{\GP}{\textup{\tiny GP}}
\newcommand{\BC}{\textup{\tiny BC}}
\newcommand{\T}{\mathsf{T}} 
\renewcommand{\b}[1]{#1} 
\DeclarePairedDelimiterX\Set[2]{\lbrace}{\rbrace}%
{ #1 \,:\, #2 } 
\DeclarePairedDelimiterX\inprod[2]{\langle}{\rangle}%
{ #1 , #2 } 
\DeclarePairedDelimiter\ceil{\lceil}{\rceil}
\DeclarePairedDelimiter\floor{\lfloor}{\rfloor}
\begin{document}

\title{Maximum Likelihood Estimation and Uncertainty Quantification for Gaussian Process Approximation of Deterministic Functions\thanks{Submitted to arXiv on 11 May, 2020.
    \funding{TK and FT were supported by the Aalto ELEC Doctoral School. GW was supported by an EPSRC Industrial CASE award [18000171] in partnership with Shell UK Ltd. TK and CJO were supported by the Lloyd's Register Foundation programme on data-centric engineering at the Alan Turing Institute, United Kingdom. SS was supported by the Academy of Finland.
}}}

\headers{GP approximation of deterministic functions}{T. Karvonen, G. Wynne, F. Tronarp, C. J. Oates, and S. Särkkä}

\author{
  Toni Karvonen\thanks{Department of Electrical Engineering and Automation, Aalto University, Finland \& the Alan Turing Institute, UK}
  \and
  George Wynne\thanks{Department of Mathematics, Imperial College London, UK}
  \and
  Filip Tronarp\thanks{Department of Electrical Engineering and Automation, Aalto University, Finland \& University of T\"ubingen, Germany}
  \and
  Chris J. Oates\thanks{School of Mathematics, Statistics and Physics, Newcastle University, UK \& the Alan Turing Institute, UK}
  \and Simo Särkkä\thanks{Department of Electrical Engineering and Automation, Aalto University, Finland}
  }

\maketitle

\begin{abstract} 
Despite the ubiquity of the Gaussian process regression model, few theoretical results are available that account for the fact that parameters of the covariance kernel typically need to be estimated from the dataset.
This article provides one of the first theoretical analyses in the context of Gaussian process regression with a noiseless dataset.
Specifically, we consider the scenario where the scale parameter of a Sobolev kernel (such as a Mat\'{e}rn kernel) is estimated by maximum likelihood.
We show that the maximum likelihood estimation of the scale parameter alone provides significant adaptation against misspecification of the Gaussian process model in the sense that the model can become ``slowly'' overconfident at worst, regardless of the difference between the smoothness of the data-generating function and that expected by the model.
The analysis is based on a combination of techniques from nonparametric regression and scattered data interpolation.
Empirical results are provided in support of the theoretical findings.
\end{abstract}

\begin{keywords}
  nonparametric regression, scattered data approximation, credible sets, Bayesian cubature, model misspecification
\end{keywords}

\begin{AMS}
  60G15, 62G20, 68T37, 65D05, 46E22
\end{AMS}

\section{Introduction} \label{sec: intro}

This article considers the related tasks of approximation and integration of a \emph{deterministic} function $f \colon \Omega \to \R$, defined on $\Omega \subset \R^d$, using Gaussian process (GP) regression based on a noiseless dataset $\mathcal{D} \coloneqq \{(x_n, f(x_n))\}_{n=1}^N$.
In GP regression the true function $f$ is formally considered unknown and is modelled {\it a priori} with a GP $f_\GP \sim \mathrm{GP}(m, K)$, which is characterised by a \emph{mean} function $m : \Omega \rightarrow \mathbb{R}$ and a symmetric positive (semi-)definite covariance function $K \colon \Omega \times \Omega \rightarrow \mathbb{R}$, called a \emph{kernel}. 
The GP is conditioned on the dataset $\mathcal{D}$ and the conditional GP is used to produce credible sets for quantities of interest, such as the function~$f$ itself or its integral.
The popularity of the GP model can be attributed, at least in part, to its elegance, flexibility and computational tractability, and as such GPs underpin much of the modern statistical toolkit for both regression and classification~\citep{RasmussenWilliams2006}.
In the last decade GPs have been adopted in a wide variety of applications, a selection of which includes time series analysis~\citep{Wang2006}, astrophysical data analysis~\citep{Rajpaul2015}, spatial statistics~\citep{Lindgren2011}, bioinformatics~\citep{Gao2008}, robotics~\citep{Yang2013}, functional data analysis~\citep{Shi2008}, computer science~\citep{Manogaran2018}, emulation of computer models~\citep{Sacks1989,Kennedy2001}, and probabilistic numerical computation~\citep{Larkin1972,Hennig2015,Cockayne2019}.

The GP model is typically \emph{misspecified}: the deterministic data-generating function $f$ is not, or does not ``resemble'', a sample path of $f_\GP$.
Accordingly, the critical importance of selecting an appropriate kernel $K$ in GP regression is well-understood~\citep{MacKay1992}.
Different approaches include selecting a single kernel from a continuously parametrised family $\{K^\theta\}_{\theta \in \Theta}$~\citep[Chapter~5]{RasmussenWilliams2006}, selecting a kernel from an arbitrarily rich dictionary of possibilities~\citep{Duvenaud2014,Sun2018}, or even learning a kernel in a nonparametric manner from the data itself~\citep{Bazavan2012,Oliva2016}.
In the parametric case, maximisation of (marginal) likelihood is the most common way to select the kernel parameter $\theta$, for example being the default in well-documented software packages~\citep[e.g.,][]{RasmussenWilliams2006}. 
Despite their ubiquity in the applied context, little is known about the circumstances in which these approaches to kernel parameter selection work well and, by extension, when the credible sets arising from the GP regression model can be trusted. 
The increasing use of GP regression models and their associated credible sets in strategic and safety-critical systems, such as monitoring mine gas emissions \citep{Dong2012}, assessing the health of lithium-ion batteries \citep{Liu2013}, and detecting anomalous or malicious maritime activity \citep{Kowalska2012}, as well as in more general adaptive numerical computation routines~\citep[e.g.,][]{Rathinavel2019}, has led to an urgent need to better understand approaches to kernel parameter selection and model misspecification at a theoretical level.

This article shows that one of the simplest and most commonly used techniques, maximum likelihood estimation of a single \emph{scale parameter} of the kernel, provides a certain amount of protection against model misspecification.
We consider a kernel \sloppy{${K^\sigma(x, y) \defeq \sigma^2 K( x, y)}$} that depends on a scale parameter $\sigma > 0$ and analyse the asymptotic (as $N \to \infty$) behaviour of $\sigma_\ML(f,X_N)$, the \emph{maximum likelihood estimate} of $\sigma$ given noiseless evaluations of~$f$ at a set $X_N \subset \Omega$ consisting of $N$ points, and implications on the coverage of the credible sets derived from the fitted GP model.
For finitely smooth kernels (e.g., Matérn) we show that the maximum likelihood estimate detects the smoothness of the data-generating function: if $K$ induces a Sobolev space of smoothness $\alpha$, $f$ is in a certain sense \emph{exactly} of smoothness $\beta \leq \alpha$, and the points $X_N$ cover $\Omega$ in a sufficiently uniform manner, then $\sigma_\ML(f, X_N)$ is of order $N^{(\alpha-\beta)/d - 1/2}$, up to logarithmic factors.
Because $f$ being akin to a sample of $f_\GP$ roughly speaking corresponds to $\beta = \alpha - d/2$ (see Section~\ref{sec:sample-paths}), the maximum likelihood estimate inflates the conditional variance if $f$ is rougher than the samples and deflates it $f$ is smoother than the samples.
If $f$ is in the Sobolev space of smoothness $\beta \geq \alpha$, then $\sigma_\ML(f, X_N)$ is of order $N^{-1/2}$.
We then use these result to prove that, no matter the degree of over or undersmoothing of $f$ by the kernel, the model can become at most ``slowly'' overconfident in that the GP conditional standard deviation can decay at most with rate $N^{-1/2}$ faster than the true estimation error.
If the scale parameter is held fixed~\eqref{eq:overconfidence-fixed-sigma} demonstrates that the model may become significantly more overconfident than this.

The results are reviewed in more detail in Section~\ref{subsec: main results}.
Section~\ref{sec:general-kernels} considers the case where~$f$ is an element of the reproducing kernel Hilbert space of the kernel $K$ and therefore smoother than expected by the GP. Section~\ref{sec:sobolev-kernels} extends the results for kernels that induce Sobolev spaces by allowing the function to live in a rougher Sobolev space than the one induced by the kernel, in which case the results are dependent on the degree of oversmoothing by the kernel.
Numerical examples are used to validate the theoretical results in Section~\ref{sec:examples}.
In most applications of GP regression there will be several kernel parameters in addition of the scale parameter that must be jointly estimated; our analysis does not extend to that more general setting.
The opportunities and challenges associated with estimation of other kernel parameters are discussed in Section~\ref{sec: discussion}.

\section{Background} \label{sec:background}

In this section we introduce the GP regression model and recall how the kernel scale parameter can be estimated using maximum likelihood. Then we discuss how credible sets can be obtained based on the fitted GP model and what it means to say that the model is asymptotically underconfident or overconfident.
For the latter, we focus on credible sets both for function values and integrals of the function of interest.

\subsection{Gaussian Process Regression} \label{subsec: GP regression}

Let $\Omega$ be an arbitrary subset of $\R^d$ and $f \colon \Omega \to \R$ a deterministic function of interest. 
In GP regression the function $f$ is modelled using a GP $f_\GP$, for which $(f_\GP(x_1), \dots, f_\GP(x_N))$ is Gaussian-distributed for any finite collection $\{x_1,\dots,x_N\} \subset \Omega$ of points.
Let $\mathbb{P}$ denote the law of the GP and let $\mathbb{E}$, $\mathbb{V}$, and $\mathbb{C}$, respectively, denote the expectation, variance, and covariance with respect to $\mathbb{P}$.
The law $\mathbb{P}$ of a GP is characterised by a mean function $m : \Omega \rightarrow \mathbb{R}$, such that $m(x) = \mathbb{E}[f_\GP(x)]$ for all $x \in \Omega$, and a symmetric positive definite covariance function $K : \Omega \times \Omega \rightarrow \mathbb{R}$, called a kernel, such that $K(x,y) = \mathbb{C}[f_\GP(x), f_\GP(y)]$ for all $x,y \in \Omega$. 
Although the kernel can be allowed to be positive semi-definite, in this article we only consider positive definite kernels.
It is common to denote the GP via the shorthand $f_\GP \sim \mathrm{GP}(m, K)$.
Throughout the article and without a loss of generality\footnote{If $m$ is non-zero then the true function $f$ can just be replaced by $f - m$, since $m$ is considered to be known and can thus $f - m$ can also be pointwise evaluated.} we assume that $f_\GP$ is centred (i.e., $m(x) = 0$ for all $x \in \Omega$). 
Further details on GP regression can be found in \cite{Bogachev1998,Stein1999}; and \cite{RasmussenWilliams2006}.

Given a set \sloppy{${\mathcal{D} = \{ (\b{x}_i, f(\b{x}_i))\}_{i=1}^N}$} consisting of \emph{exact} evaluations of $f$ at distinct points $X = \{\b{x}_1, \ldots, \b{x}_N \} \subset \Omega$, the conditional process is again Gaussian: $f_\GP \mid \mathcal{D} \sim \mathrm{GP}( s_{f,X}, P_X )$, with the conditional mean and covariance functions
\begin{equation} \label{eq:GP-posterior}
  s_{f,X}(\b{x}) \coloneqq \b{k}_X(\b{x})^\T \b{K}_X^{-1} \b{f}_X \quad \text{ and } \quad P_X(\b{x}, y) \coloneqq K(\b{x}, y) - \b{k}_X(\b{x})^\T \b{K}_X^{-1} \b{k}_X(y),
\end{equation}
where $(\b{k}_X(\b{x}))_i = K(\b{x}, \b{x}_i)$, $(\b{K}_X)_{i,j} = K(\b{x}_i, \b{x}_j)$, and $(\b{f}_X)_i = f(\b{x}_i)$.
The conditional process quantifies the uncertainty associated with $f$ after the data $\mathcal{D}$ have been observed and can be summarised in terms of a \emph{credible set} for a quantity of interest. 
Let $F$ stand for the cumulative distribution function of the standard normal distribution and denote $\psi_a \coloneqq F^{-1}(1-a/2)$. For any $0 < a < 1$, the Gaussian model implies that
\begin{equation} \label{eq:interval}
  \mathbb{P}\Big[ \abs[1]{ f_{\GP}(\b{x}) - s_{f,X}(\b{x}) } \leq \psi_a P_X(\b{x}, \b{x})^{1/2} \: \Bigl\lvert \: \mathcal{D} \Big] = 1-a \quad \text{ for any } \quad \b{x} \in \Omega.
\end{equation}
Thus (if $P_X(x,x) \neq 0$) the interval bounded by $s_{f,X}(x) \pm \psi_a P_X(x,x)^{1/2}$ is a $(1-a)$ credible set for the unknown quantity $f(x)$ at fixed $x \in \Omega \setminus X$ under the GP model.
However, as is evident from its algebraic expression in~\eqref{eq:GP-posterior}, the conditional covariance $P_X$ does \emph{not} depend on the function evaluations $f_X$, which is clearly undesirable as this implies that the size of the credible set is identical for two wildly different functions evaluated at the same inputs $X$. 
It is well-understood that, for sensible uncertainty quantification to be performed, the kernel should be adapted to the dataset~\citep{MacKay1992}.
When the kernel is parametrised by a collection of parameters $\b{\theta}$ (i.e., $K = K^{\b{\theta}}$), this means that $\b{\theta}$ should be estimated based on the dataset.
Standard approaches to estimation of $\theta$ are reviewed in Section~\ref{subsec: scale param est}.

\subsection{Bayesian Cubature} \label{subsec: BC}

It is convenient to consider and easier to visualise credible sets for scalar quantities derived from $f$, rather than $f$ itself.\footnote{Indeed, unlike the scalar case there is no general consensus on how one should aim to construct a credible set in a function space; see for example \cite{Leibl2019}.}
Moreover, approximation of integrals (i.e., numerical integration) is among the most prevalent applications where noiseless data are provided.
For these reasons we also focus on integrals of $f$ as scalar quantities of interest.
The use of the GP regression model as a means to perform numerical integration is called \emph{Bayesian cubature} (\emph{quadrature} if $d=1$) and is due to \cite{Larkin1972}.
See also \cite{OHagan1991} and \cite{Briol2019} for background.
For a Lebesgue measurable\footnote{Whenever Bayesian cubature is discussed or results for it are provided, it is implicitly assumed in this article that $\Omega$ is Lebesgue measurable.} $\Omega \subset \R^d$ and a positive, bounded, and measurable weight function $w \colon \Omega \to \R$ we consider the integral
\begin{equation} \label{eq:integral}
  I(f) \defeq \int_\Omega f(x) w(x) \dif x
\end{equation}
as a scalar quantity of interest.
Because the integration operator is a linear functional, the random variable $I(f_\GP) \mid \mathcal{D}$ is Gaussian if $\int_\Omega K(x,x) w(x) \dif x < \infty$. Its mean and variance are
\begin{subequations}
\begin{align}
  Q_X(f) & \defeq \mathbb{E}\big[ I(f_\GP) \mid \mathcal{D} \big] = \int_\Omega s_{f,X}(x) w(x) \dif x, \\
  V_X & \defeq \mathbb{V}\big[ I(f_\GP) \mid \mathcal{D} \big] = \int_\Omega \int_\Omega P_X(x,y) w(x) w(y) \dif x \dif y. \label{eq:BQ-var}
\end{align}
\end{subequations}
The Gaussian model for the integral then implies that
\begin{equation} \label{eq:interval-BQ}
  \mathbb{P}\Big[ \abs[1]{ I(f_{\GP}) - Q_X(f) } \leq \psi_a V_X^{1/2} \: \Bigl\lvert \: \mathcal{D} \Big] = 1-a
\end{equation}
and thus (if $V_X \neq 0$) the interval bounded by $Q_X(f) \pm \psi_a V_X^{1/2}$ is a $(1-a)$ credible set, or \emph{credible interval} for $I(f)$ under the GP model.

\subsection{Scale Parameter Estimation} \label{subsec: scale param est}

In this section we consider the case where the kernel $K^\sigma(x,y) = \sigma^2 K(x,y)$ depends on a single fixed scale parameter $\sigma > 0$.
Under the law \sloppy{${f_\GP \sim \mathrm{GP}(0, K^\sigma)}$} the conditional distribution is $f_\GP \mid \mathcal{D} \sim \mathrm{GP}( s_{f,X}, P_X^\sigma)$, where the conditional mean remains unchanged from~\eqref{eq:GP-posterior} and the covariance is
\begin{equation*}
  P_X^\sigma(x, y) \coloneqq \sigma^2 P_X(x, y) = \sigma^2 \big[ K(x, y) - \b{k}_X(\b{x})^\T \b{K}_X^{-1} \b{k}_X(y) \big].
\end{equation*}
The purpose of this article is to analyse the maximum likelihood estimate, $\sigma_\ML(f,X)$, of~$\sigma$ and its effect on the credible sets~\eqref{eq:interval} and~\eqref{eq:interval-BQ}. 
The MLE is defined as the maximiser
\begin{equation} \label{eq:MLE-definition}
  \sigma_\ML(f,X) \coloneqq \argmax_{ \sigma > 0} L(\sigma \mid \mathcal{D}) = \sqrt{ \frac{ \b{f}_X^\T \b{K}_X^{-1} \b{f}_X}{N}}.
\end{equation}
of the log marginal likelihood,
\begin{equation*}
  \log L( \sigma \mid \mathcal{D} ) \defeq -\frac{1}{2} \bigg( \frac{\b{f}_X^\T \b{K}_X^{-1} \b{f}_X}{\sigma^2} + N \log \sigma^2 + \log \det \b{K}_X + N \log (2\pi) \bigg).
\end{equation*}
Equation~\eqref{eq:MLE-definition} is easy to verify by finding the root of the derivative of $L(\sigma \mid \mathcal{D})$.
The estimator $\sigma_\ML(f,X)$ is sometimes called a \emph{maximum marginal likelihood} or \emph{empirical Bayes} estimator.
In applications where additional parameters are present in the kernel, these could be simultaneously estimated based on the dataset.
However, our focus on the scale parameter is due to the closed-form expression in~\eqref{eq:MLE-definition}; such expressions are not be available in general.

\subsection{Credible Sets and Maximum Likelihood} \label{sec:parameter-estimation}

Adopting the maximum likelihood approach to parameter selection means that $\sigma$ is replaced by $\sigma_\ML(f,X)$ in~\eqref{eq:interval} and~\eqref{eq:interval-BQ} to produce
\begin{align*}
  \mathbb{P}\Big[ \abs[1]{ f_{\GP}(\b{x}) - s_{f,X}(\b{x}) } \leq \psi_a \sigma_\ML(f,X) P_X(\b{x}, \b{x})^{1/2} \: \Bigl\lvert \: \mathcal{D} \Big] &= 1-a, \\
  \mathbb{P}\Big[ \abs[1]{ I(f_{\GP}) - Q_X(f) } \leq \psi_a \sigma_\ML(f,X) V_X^{1/2} \: \Bigl\lvert \: \mathcal{D} \Big] &= 1-a.
\end{align*}
We use the compact notation
\begin{equation} \label{eq:credible-widths}
  R_\GP(\b{x}, f,X) \coloneqq \sigma_\ML(f,X) P_X(\b{x}, \b{x})^{1/2} \quad \text{ and } \quad R_\BC(f,X) \coloneqq \sigma_\ML(f,X) V_X^{1/2}
\end{equation}
for the unscaled widths of the credible sets and denote the credible sets as
\begin{subequations} \label{eq:credible-sets}
\begin{align}
  \mathcal{C}_\GP^a(x,f,X) &\coloneqq \Set[\Bigg]{ y \in \R }{ \frac{\abs[0]{y - s_{f,X}(x)}}{R_\GP(x,f,X)} \leq \psi_a }, \label{eq:credible-set-GP}, \\
  \mathcal{C}_\BC^a(f,X) &\coloneqq \Set[\Bigg]{ \mu \in \R}{ \frac{\abs[0]{\mu - Q_X(f)}}{R_\BC(f,X)} \leq \psi_a }. \label{eq:credible-set-BQ}
\end{align}
\end{subequations}
These credible sets underpin inferences and decisions based on the fitted GP regression model, with applications in diverse fields, including strategic and safety-critical systems, several of which were mentioned in Section~\ref{sec: intro}.
It is therefore important to understand when these sets can and cannot be trusted to accurately reflect the function $f$ or its integral.

It is immediately clear from \eqref{eq:MLE-definition} that credible sets are invariant to scaling of $f$, in the sense that the transformation $f \mapsto \lambda f$ for some constant $\lambda$ leads to $\sigma_\ML(f,X) \mapsto \abs[0]{\lambda} \sigma_\ML(f,X)$.
However, it is far from clear how these credible sets behave as a function of the point set $X$.
In particular, we consider the limit of a large number of points next.

\subsection{Asymptotics of Credible Sets} \label{subsec: coverage of credible intervals}

Consider a sequence $(X_N)_{N = 1}^\infty \subset \Omega$ of point sets such that $X_N$ contains $N$ distinct points.
The function $f$ is fixed and our focus is on the behaviour of credible sets when $N \rightarrow \infty$, a setting called \emph{fixed domain asymptotics} by \cite{Stein1999}. Specifically, we are interested in whether or not $f(x)$ or $I(f)$ can be expected to fall within the relevant credible set, $\mathcal{C}^a_\GP(x,f,X_N)$ or $\mathcal{C}^a_\BC(f,X_N)$, for large $N$.
To avoid confusion, it is important to note that our focus is distinct from the assessment of \emph{frequentist coverage} that is more commonplace in the statistical literature. There, it is most common for $N$ and $f$ to be fixed and for observations of $f$ to be contaminated with noise; one can then ask for credible sets to have correct coverage with respect to realisations of the noise generating process.
Equally, our analysis is distinct from an assessment of frequentist coverage in which $f$ is considered to be drawn at random from $\mathbb{P}$ and observed (without noise) at $N$ locations.
To emphasise, in this article the dataset~$\mathcal{D}$ and function $f$ are deterministic and the only source of uncertainty is the \emph{epistemic uncertainty} from the GP regression model.

We say that a GP model with a covariance kernel $K$ is \emph{asymptotically overconfident} for approximation at $x \in \Omega$ (respectively, integration) of a function $f \colon \Omega \to \R$ if
\begin{equation} \label{eq:overconfident}
  \liminf_{N \to \infty} \frac{ \abs[0]{f(x) - s_{f,X_N}(x)}}{ R_\textsm{GP}(x,f,X_N)} = \infty \quad \quad \Bigg( \liminf_{N \to \infty} \frac{\abs[0]{I(f) - Q_{X_N}(f)}}{R_\textsm{BC}(f,X_N)} = \infty \Bigg)
\end{equation}
and \emph{asymptotically underconfident} if
\begin{equation} \label{eq:underconfident}
  \lim_{N \to \infty} \frac{ \abs[0]{f(x) - s_{f,X_N}(x)}}{ R_\textsm{GP}(x,f,X_N)} = 0 \quad \quad \Bigg( \lim_{N \to \infty} \frac{\abs[0]{I(f) - Q_{X_N}(f)}}{R_\textsm{BC}(f,X_N)} = 0 \Bigg).
\end{equation}
Conforming to conventional statistical terminology we call the ratios in~\eqref{eq:overconfident} and~\eqref{eq:underconfident} \emph{standard scores}.
Note that $R_\textsm{GP}(x,f,X_N) = 0$ only if $f(x) = s_{f,X_N}(x)$; in this case we set $0/0 = 1$.
Asymptotic overconfidence means that the width of the credible set decays faster than the true approximation or integration error:
for any fixed $a \in (0,1)$ we have $f(x) \notin \mathcal{C}_\GP^a(x,f,X_N)$ or $I(f) \notin \mathcal{C}_\BC^a(f,X_N)$ for all sufficiently large $N$.
Conversely, asymptotic underconfidence implies that for any $a \in (0,1)$ we have $f(x) \in \mathcal{C}_\GP^a(x,f,X_N)$ or $I(f) \in \mathcal{C}_\BC^a(f,X_N)$ for all $N$ large enough.

Overconfidence can have disastrous effect in particular in safety-critical applications while underconfidence results in inefficiency as more data than is necessary is needed to attain the same level of assurance.
The ideal state of affairs is thus for the model to be neither asymptotically overconfident nor underconfident, a situation which we call \emph{asymptotic honest} as this implies that the size of the credible sets decay at rate that is commensurate with the true approximation error. See \cite{Szabo2015} for a similar concept.
In practice asymptotic honesty is a weak requirement and does not guarantee credible sets can be trusted at finite values of $N$.
Our tools are not powerful enough to identify or prove the existence of meaningful collections of functions for which the model is asymptotically honest, and our results concern only asymptotic overconfidence and underconfidence.

\subsection{Prior Work on Maximum Likelihood Estimation} \label{subsec: prior work}

The only prior work in an identical setting, to the best of our knowledge, is by \cite{XuStein2017} and \cite{Karvonen-MLSP2019}.
\cite{XuStein2017} considered the Gaussian kernel $K(x,y) = \exp(-(x-y)^2/(2\ell^2))$ with $\ell > 0$ fixed and monomials $f(x) = x^p$ on $[0,1]$, evaluated at successive sets of $N$ equispaced points, $X_N = \{1/N, 2/N, \ldots, 1\}$. 
They conjectured an asymptotic equivalence
\begin{equation*}
  \sigma_\ML(f, X_N) \sim \frac{\ell^{2p}}{\sqrt{2\pi} (p+1/2)} N^{p - 1/2} \quad \text{ for any } \quad p \geq 0
\end{equation*}
and proved this for $p=0$ and partially for $p=1$ using an explicit Cholesky decomposition of the kernel matrix.
\cite{Karvonen-MLSP2019} worked with the Ornstein--Uhlenbeck kernel $K(x,y) = \exp(-\lambda\abs[0]{x-y}) - \exp(\!-\lambda(x+y))$, with $\lambda > 0$ fixed, and equispaced evaluation points on $[0,1]$. 
They proved that $ \lim_{N \to \infty}\sigma_\ML(f,X_N)$ is proportional to the quadratic variation $V^2(f)$ of $f$. Consequently, the MLE converges to zero if the Hölder exponent of $f$ exceeds $1/2$ (e.g., the function is differentiable) and to a positive constant if $V^2(f) \in (0, \infty)$. As almost all sample paths of the Ornstein--Uhlenbeck process have a finite non-zero quadratic variation, this is in agreement with the intuition that the maximum likelihood estimate should behave reasonably if the function is plausible as a sample from the GP. 

In addition, frequentist coverage of Bayesian credible sets when various hyperparameters of a GP are selected with maximum likelihood has been extensively studied by \cite{Szabo2013,Szabo2015} and \cite{HadjiSzabo2019}.
In these articles the model of interest differs from ours, being the Gaussian \emph{white noise model} for an unknown function $f(x) = \sum_{i=1}^\infty \vartheta_i \varphi_i(x)$ expressed in a basis $\{\varphi_i\}_{i=1}^\infty$. A sequence $Y = (Y_i)_{i=1}^\infty$ of noisy observations are made directly on the square-summable parameter $\vartheta = (\vartheta_i)_{i=1}^\infty$ via
\begin{equation*}
  Y_i = \vartheta_{i} + \frac{1}{\sqrt{\eta}} Z_i, \quad \text{ where } \quad \quad Z_i \sim \mathrm{N}(0,1) \text{ are i.i.d.}
\end{equation*}
In \cite{Szabo2013,Szabo2015} the parameter $\vartheta$ was assigned a Gaussian prior distribution that is analogous to GPs with Sobolev kernels that we analyse.
Behaviour as $\eta \to \infty$ (i.e., the noise level decreases) of the MLE of the scaling parameter of this prior and the coverage properties of the resulting credible sets were analysed in \cite{Szabo2013} for the true parameter satisfying $\vartheta_i^2 \leq C_2^2 i^{-1-2\beta}$ or $C_1^2 i^{-1-2\beta} \leq \vartheta_i^2 \leq C_2^2 i^{-1-2\beta}$ for some $C_1, C_2 > 0$ and a smoothness parameter $\beta > 0$. 
These sets are analogous to our $S_-^\beta(\R^d)$ and $S^\beta(\R^d)$ defined in Section~\ref{sec:sobolev-spaces}.
The white noise model is widely studied as a theoretically tractable analogue of regression with noisy data. As such the results are not directly applicable in our context where the function $f$ is exactly evaluated.

For other work related to GP misspecification and kernel parameter estimation in a variety of settings, see \citet{Stein1993,Bachoc2013,Bachoc2017,Bachoc2018,Bachoc2019}; and \citet{Teckentrup2019}.

\subsection{Our Contributions} \label{subsec: main results}

Let $(X_N)_{N=1}^\infty \subset \Omega$ be a sequence of point sets, each containing $N$ distinct points, and let the function $f \colon \Omega \to \R$ be fixed. Our results concern (i) the behaviour, as $N \to \infty$, of the maximum likelihood estimate, $\sigma_\ML(f,X_N)$ in~\eqref{eq:MLE-definition}, of the GP scale parameter based on exact evaluation of $f$ on $X_N$ and (ii) whether or not this induces asymptotic overconfidence or underconfidence in the GP model, as defined in~\eqref{eq:overconfident} and~\eqref{eq:underconfident}.

\paragraph{Reproducing kernel Hilbert spaces} 

In Section~\ref{sec:general-kernels} we do not place any restrictions on the covariance kernel $K$. 
We first prove the surprising result that if $f$ is an element of $\mathcal{H}_K(\Omega)$, the reproducing kernel Hilbert space of $K$, then
\begin{equation} \label{eq:mle-rkhs-contributions}
  \sigma_\ML(f, X_N) \asymp N^{-1/2}
\end{equation}
regardless of the point sets $X_N$ used, provided the $X_N$ share a common element $x^*$ such that $f(x^*) \neq 0$ (Proposition~\ref{prop:MLE-general}).
Theorem~\ref{thm:coverage-RKHS}, an implication of this, states that for such functions and point sets the model cannot become overconfident ``too fast'', meaning that
\begin{equation} \label{eq:overconfidence-contributions}
  \sup_{x \in \Omega} \, \frac{ \abs[0]{ f(x) - s_{f,X_N}(x)}}{ R_\textsm{GP}(x,f,X_N)} = O(N^{1/2}) \quad \text{ and } \quad \frac{\abs[0]{I(f) - Q_{X_N}(f)}}{ R_\textsm{BC}(f,X_N)} = O(N^{1/2}).
\end{equation}
Note that this does \emph{not} imply that the model is asymptotically overconfident.
Indeed, in Theorem~\ref{thm:underconfidence-RKHS} we show that \emph{underconfidence} occurs if $f$ belongs to a certain subspace of $\mathcal{H}_K(\Omega)$.

\paragraph{Sobolev spaces} Section~\ref{sec:sobolev-kernels} focusses on \emph{Sobolev kernels},  which induce Sobolev spaces and include the popular Matérn kernels.
The restrictive assumption $f \in \mathcal{H}_K(\Omega)$ is relaxed and it is proven in Proposition~\ref{thm:MLE-upper} that if $K$ induces a Sobolev space of smoothness $\alpha$ and $f$ is in the Sobolev space of smoothness $\beta < \alpha$, then
\begin{equation} \label{eq:mle-sobolev-contributions}
  \sigma_\ML(f, X_N) = O\big( N^{(\alpha-\beta)/d-1/2} \big)
\end{equation}
assuming that $X_N$ cover the domain $\Omega$ in a uniform fashion;~\eqref{eq:mle-rkhs-contributions} is applicable  if $\beta \geq \alpha$.
Moreover, a similar lower bound is available when a lower bound on the smoothness of $f$ is known (Proposition~\ref{thm:MLE-lower}).
If it is known that $f$ is of \emph{exact smoothness} $\beta \leq \alpha$, in that it belongs to the set $S^\beta(\Omega)$ in~\eqref{eq:S-definition}, then the rate~\eqref{eq:mle-sobolev-contributions} is sharp up to logarithmic factors (Theorem~\ref{cor:MLE-upper-lower}).
In particular, if $f$ is of exact smoothness $\beta = \alpha - d/2$, which roughly speaking corresponds to $f$ having the same regularity as samples from the GP (see~Section~\ref{sec:sample-paths}), then $\sigma_\ML(f,X_N)$ is constant up to logarithmic factors.
If the exact smoothness of $f$ is known, bounds similar to~\eqref{eq:overconfidence-contributions} on the standard scores then hold by Theorem~\ref{thm:coverage-sobolev}.
These results thus show that maximum likelihood estimation of the scale parameter is a useful tool in adapting the GP model to misspecified smoothness of the data-generating function.
Finally, according to Theorem~\ref{thm:underconfidence-sobolev},~$f$ being much smoother than the kernel implies underconfidence of the GP model.

Empirical results in Section~\ref{sec:examples} verify the MLE asymptotics~\eqref{eq:mle-rkhs-contributions} and~\eqref{eq:mle-sobolev-contributions} but suggest that the standard score bounds~\eqref{eq:overconfidence-contributions} and their extensions in the Sobolev setting are not tight.
Although sufficient conditions for asymptotic honesty of a GP model are not provided here, the collection of results that we establish represents a substantial expansion of what is currently known in the context of maximum likelihood estimation with a noiseless dataset.

\subsection{Notation} \label{subsec: notation}

For $x \in \mathbb{R}^d$ we let $\|x\| \defeq (x_1^2+ \dots + x_d^2)^{1/2}$ be the Euclidean norm. 
The space $L^p(\Omega)$ stands for the space of $p$-integrable functions on a Lebesgue measurable set $\Omega \subset \R^d$.
For non-negative sequences $(a_n)_{n=1}^\infty$ and $(b_n)_{n=1}^\infty$ we denote $a_n \lesssim b_n$ ($a_n \gtrsim b_n$) if there is $C > 0$ such that $a_n \leq C b_n$ ($a_n \geq C b_n$) for every sufficiently large~$n$.
When $a_n \lesssim b_n \lesssim a_n$, we write $a_n \asymp b_n$.
Analogous notation is used for non-negative functions. For example, $h(x) \lesssim g(x)$ means that there is $C > 0$ such that $h(x) \leq C g(x)$ for all sufficiently large $\norm[0]{x}$.

The \emph{restriction} of a function $g \colon A \to \R$ on a subset $B \subset A$ is the function $g|_B \colon B \to \R$ such that $g|_B(b) = f(b)$ for every $b \in B$.
In particular, the statement that $h|_X = g|_X$ for a set $X \subset \R^d$ means that the function $g$ \emph{interpolates} $h$ on $X$.
Conversely, if $g \colon A \to \R$ and $h \colon B \to \R$ are such that $g|_B = h$, then $g$ is said to be an \emph{extension} of $h$ (onto $A$).

In what follows the set $X = \{x_1, \ldots,x_N\}$ always denotes a collection of $N \in \mathbb{N}$ distinct points contained in the domain $\Omega \subset \mathbb{R}^d$ of the function $f$ of interest. If it is necessary to emphasise the number of points in the set, we write $X_N$ for a set of $N$ points.

\section{Approximation of Functions in the RKHS} \label{sec:general-kernels}

In this section we introduce reproducing kernel Hilbert spaces (RKHSs) and study the maximum likelihood estimate and implications for the standard scores when $f$ is regular enough to be contained in the RKHS of the covariance kernel.
Results for less regular functions are deferred until Section~\ref{sec:sobolev-kernels}.

\subsection{Positive-Definite Kernels and RKHSs} \label{subsec: RKHS introduce}

The monograph of \citet{BerlinetThomasAgnan2004} is a standard introduction to the theory of RKHS.
Let $\Omega$ be an arbitrary subset of $\R^d$.
We say that a function $K \colon \Omega \times \Omega \to \R$ is a kernel (on $\Omega$) if it is \emph{positive-definite}.
Positive-definiteness entails that, for any $N \in \mathbb{N}$, the $N \times N$ \emph{kernel matrix} $(\b{K}_X)_{i,j} = K(\b{x}_i, \b{x}_j)$ is positive-definite for any set $X = \{ \b{x}_1, \ldots, \b{x}_N \} \subset \R^d$ of $N$ distinct points.
Every kernel induces a unique \emph{reproducing kernel Hilbert space} $\mathcal{H}_K(\Omega)$ equipped with the inner product $\inprod{\cdot}{\cdot}_{\mathcal{H}_K(\Omega)}$ and the induced norm $\norm[0]{\cdot}_{\mathcal{H}_K(\Omega)}$. This space consists of certain sufficiently regular functions $g \colon \Omega \to \R$ and is characterised by
\begin{itemize}
\item[(i)] $K(\cdot, \b{x}) \in \mathcal{H}_K(\Omega)$ for every $\b{x} \in \Omega$ and
\item[(ii)] $\inprod{g}{K(\cdot,x)}_{\mathcal{H}_K(\Omega)} = g(x)$ for every $g \in \mathcal{H}_K(\Omega)$ and $\b{x} \in \Omega$ (the \emph{reproducing property}).
\end{itemize}
Note the RKHS~$\mathcal{H}_K(\Omega)$ and its norm are always those of the ``unscaled'' kernel $K$. That is, they do not depend on the scale parameter $\sigma$.

Throughout the article we assume that $K$ is a kernel.
In this section the kernel $K$ is arbitrary, meaning that it is not necessarily straightforward to verify if a given function is contained in its RKHS.
However, in Section~\ref{sec:sobolev-kernels} the kernel is selected such that the RKHS is a Sobolev space so that the differentiability of a function determines if it is a member of the RKHS.
We occasionally define the kernel on the whole of $\R^d$ and then consider the restriction of $\mathcal{H}_K(\R^d)$ to~$\Omega \subset \R^d$.
The restriction consists of functions $g \colon \Omega \to \R$ that admit an extension $g_0 \in \mathcal{H}_K(\R^d)$ and its norm is
\begin{equation*}
  \norm[0]{g}_{\mathcal{H}_K(\Omega)} \defeq \inf\Set[\big]{ \norm[0]{g_0}_{\mathcal{H}_K(\R^d)} }{ g_0 \in \mathcal{H}_K(\R^d) \text{ such that } g_0|_\Omega = g}.
\end{equation*}

\subsection{Kernel Interpolation and Error Estimates}

It is necessary to recognise the equivalence of GP regression and \emph{kernel} or \emph{radial basis function interpolation}~\citep{Wendland2005,FasshauerMcCourt2015}: the GP conditional mean~\eqref{eq:GP-posterior} is the kernel interpolant to $f$ at $X$, which is to say that it is the unique function $g$ in $\mathrm{span}\{K(\cdot, \b{x}_i)\}_{i=1}^N$ such that $g|_X = f|_X$. Equivalently, $s_{f,X}$ is the interpolant to $f$ of minimal norm among the functions in the RKHS of the kernel:
\begin{equation} \label{eq:s-minimal}
  s_{f,X} = \argmin \Set[\big]{ \norm[0]{g}_{\mathcal{H}_K(\Omega)} }{ g \in \mathcal{H}_K(\Omega) \text{ such that } g|_X = f|_X}.
\end{equation}
This property implies in particular that $\norm[0]{s_{f,X}}_{\mathcal{H}_K(\Omega)} \leq \norm[0]{f}_{\mathcal{H}_K(\Omega)}$ if $f \in \mathcal{H}_K(\Omega)$.
If $f \notin \mathcal{H}_K(\Omega)$, the conditional mean is still an element of the RKHS but its norm diverges to infinity as $X$ becomes denser.
Further discussion on the relationship between GP regression and kernel-based minimum-norm interpolation can be found in \citet{Scheuerer2013,Kanagawa2018,Karvonen2019}; and \citet[Chapter~17]{FasshauerMcCourt2015}. 
\citet[Chapter~3]{Oettershagen2017} contains a compact collection of basic results on approximation in RKHS. 

The RKHS framework is useful in deriving generic estimates for GP approximation or integration error. 
The conditional variances~\eqref{eq:GP-posterior} and~\eqref{eq:BQ-var} are equal to squared \emph{worst-case errors} in function and integral approximation in the RKHS of the covariance kernel:
\begin{equation} \label{eq:WCE}
  P_X(\b{x}, \b{x})^{1/2} = \sup_{ \norm[0]{g}_{\mathcal{H}_K(\Omega)} \leq 1} \, \abs[0]{ g(\b{x}) - s_{g,X}(\b{x}) } \quad \text{ and } \quad V_X^{1/2} = \sup_{ \norm[0]{g}_{\mathcal{H}_K(\Omega)} \leq 1} \abs[0]{ I(g) - Q_X(g) }.
\end{equation}
Furthermore, the reproducing property of the kernel can be used in bounding the approximation or integration error for a specific function $f \in \mathcal{H}(K)$ using the standard deviations:
\begin{equation} \label{eq:WCE-bounds}
  \abs[0]{ f(\b{x}) - s_{f,X}(\b{x}) } \leq \norm[0]{f}_{\mathcal{H}_K(\Omega)} P_X(\b{x}, \b{x})^{1/2} \quad \text{ and } \quad \abs[0]{ I(f) - Q_X(f) } \leq \norm[0]{f}_{\mathcal{H}_K(\Omega)} V_X^{1/2}.
\end{equation}

\subsection{Maximum Likelihood Estimation in the RKHS} \label{subsec: MLEs in RKHS}

In this section we study the maximum likelihood estimator $\sigma_\textsm{ML}(f,X_N)$ and asymptotic underconfidence and overconfidence of the GP model when the function $f$ is sufficiently regular to belong to $\mathcal{H}_K(\Omega)$.\footnote{Note that, as discussed in detail in Section~\ref{sec:sample-paths}, samples from the GP do not lie in this RKHS with probability~1 if the RKHS is infinite-dimensional.}

All results in this article are based on the following expression for the MLE that, simple as it is, appears to have been seldom exploited in the GP literature:
\begin{equation} \label{eq:MLE}
  \sigma_\ML(f,X_N) = \frac{1}{\sqrt{N}} \norm[0]{s_{f,{X_N}}}_{\mathcal{H}_K(\Omega)}.
\end{equation}
Note that this equation does not require that $f \in \mathcal{H}_K(\Omega)$.
This connection between the maximum likelihood estimate of the scale parameter and the RKHS norm of the conditional mean is made explicit in \citet[Remark~9.2]{FasshauerMcCourt2015}, and the straightforward proof, based on the reproducing property and equations~\eqref{eq:GP-posterior} and~\eqref{eq:MLE-definition}, can also be found in, for example, \citet[Section~5.1]{Fasshauer2011}.
\citet[Section~3.3]{Bull2011} uses~\eqref{eq:MLE} in the context of Bayesian optimisation.
Equation~\eqref{eq:MLE} leads immediately to our first result for $f \in \mathcal{H}_K(\Omega)$.

\begin{proposition}[MLE in the RKHS] \label{prop:MLE-general} If $f \in \mathcal{H}_K(\Omega)$, then \sloppy{${\sigma_\textsm{ML}(f,X_N) \leq N^{-1/2} \norm[0]{f}_{\mathcal{H}_K(\Omega)}}$}.
Furthermore, if there exists a point $x^* \in \Omega$ such that $f(x^*) \neq 0$ and $x^* \in X_N$ for all sufficiently large $N$, then $\sigma_\textsm{ML}(f,X_N) \asymp N^{-1/2}$. 
\end{proposition}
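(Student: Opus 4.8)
The plan is to base everything on the identity~\eqref{eq:MLE}, which expresses the estimator purely through the RKHS norm of the conditional mean: $\sigma_\ML(f,X_N) = N^{-1/2} \norm[0]{s_{f,X_N}}_{\mathcal{H}_K(\Omega)}$. Both claims then reduce to sandwiching $\norm[0]{s_{f,X_N}}_{\mathcal{H}_K(\Omega)}$ between two positive constants independent of $N$. The upper bound is immediate: since $f \in \mathcal{H}_K(\Omega)$, the minimal-norm characterisation~\eqref{eq:s-minimal} of the interpolant gives $\norm[0]{s_{f,X_N}}_{\mathcal{H}_K(\Omega)} \leq \norm[0]{f}_{\mathcal{H}_K(\Omega)}$, and multiplying by $N^{-1/2}$ yields the stated bound $\sigma_\ML(f,X_N) \leq N^{-1/2} \norm[0]{f}_{\mathcal{H}_K(\Omega)}$. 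This handles the first assertion with no further work.

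For the lower bound needed to upgrade this to $\sigma_\ML(f,X_N) \asymp N^{-1/2}$, I would exploit the fact that the interpolant reproduces $f$ at the common node $x^*$. Applying the reproducing property to $g = s_{f,X_N}$ at $x^*$ and then Cauchy--Schwarz gives $\abs[0]{s_{f,X_N}(x^*)} = \abs[0]{\inprod{s_{f,X_N}}{K(\cdot,x^*)}_{\mathcal{H}_K(\Omega)}} \leq \norm[0]{s_{f,X_N}}_{\mathcal{H}_K(\Omega)} \norm[0]{K(\cdot,x^*)}_{\mathcal{H}_K(\Omega)}$. Since $\norm[0]{K(\cdot,x^*)}_{\mathcal{H}_K(\Omega)}^2 = K(x^*,x^*)$ by the reproducing property, and $s_{f,X_N}(x^*) = f(x^*)$ by the interpolation condition (because $x^* \in X_N$ for all large $N$), this rearranges to $\norm[0]{s_{f,X_N}}_{\mathcal{H}_K(\Omega)} \geq \abs[0]{f(x^*)} / K(x^*,x^*)^{1/2}$. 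The right-hand side is a strictly positive constant independent of $N$, because $f(x^*) \neq 0$ by hypothesis and $K(x^*,x^*) > 0$ as $K$ is positive-definite. Combined with~\eqref{eq:MLE} this gives $\sigma_\ML(f,X_N) \geq N^{-1/2} \abs[0]{f(x^*)} / K(x^*,x^*)^{1/2}$, and together with the upper bound it establishes $\sigma_\ML(f,X_N) \asymp N^{-1/2}$.

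The proof is essentially mechanical once~\eqref{eq:MLE} is available, so I would not expect a substantive obstacle; the only step requiring a little care is the lower bound. The key insight there is that a single shared node $x^*$ at which $f$ does not vanish already forces $\norm[0]{s_{f,X_N}}_{\mathcal{H}_K(\Omega)}$ to stay bounded away from zero uniformly in $N$, irrespective of the placement of the remaining $N-1$ points. Both hypotheses in the second statement are genuinely needed: if, for instance, every interpolation node lay in the zero set of $f$, then the minimal-norm interpolant~\eqref{eq:s-minimal} would be identically zero and $\sigma_\ML(f,X_N)$ would vanish, so neither the shared-node condition nor the requirement $f(x^*) \neq 0$ can be dropped.
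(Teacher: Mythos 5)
Your proposal is correct and follows essentially the same route as the paper: both rest entirely on the identity~\eqref{eq:MLE} together with the minimum-norm characterisation~\eqref{eq:s-minimal} for the upper bound. The only cosmetic difference is in the lower bound, where the paper invokes the monotonicity $0 < \norm[0]{s_{f,\{x^*\}}}_{\mathcal{H}_K(\Omega)} \leq \norm[0]{s_{f,X_N}}_{\mathcal{H}_K(\Omega)}$ while you compute the same quantity explicitly via the reproducing property and Cauchy--Schwarz, arriving at the identical constant $\abs[0]{f(x^*)}/K(x^*,x^*)^{1/2}$.
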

\begin{proof} If $f \in \mathcal{H}_K(\Omega)$, it follows from~\eqref{eq:MLE} and the minimum-norm characterisation~\eqref{eq:s-minimal} of the conditional mean that
  \begin{equation*}
      \sigma_\ML(f,X_N) = \frac{\norm[0]{s_{f,X_N}}_{\mathcal{H}_K(\Omega)}}{N^{1/2}} \leq \frac{ \norm[0]{f}_{\mathcal{H}_K(\Omega)} }{ N^{1/2}}.
  \end{equation*}
The minimum-norm characterisation also implies that \sloppy{${0 < \norm[0]{s_{f,\{x^*\}}}_{\mathcal{H}_K(\Omega)} \leq \norm[0]{s_{f,X_N}}_{\mathcal{H}_K(\Omega)}}$} if \sloppy{${x^* \in X_N}$} and $f(x^*) \neq 0$, which proves the second claim and completes the proof.
\end{proof}

The reasonableness or otherwise of this behaviour for the maximum likelihood estimate is best assessed in the context of its implied conditional GP, and in particular the behaviour of its credible sets.

\begin{theorem}[Slow overconfidence at worst in the RKHS] \label{thm:coverage-RKHS} If $f \in \mathcal{H}_K(\Omega)$ and there is $x^* \in \Omega$ such that $f(x^*) \neq 0$ and $x^* \in X_N$ for all sufficiently large $N$, then
  \begin{equation} \label{eq:RKHS-N12}
    \sup_{x \in \Omega} \, \frac{ \abs[0]{ f(x) - s_{f,X_N}(x)}}{ R_\textsm{GP}(x,f,X_N)} \lesssim N^{1/2} \quad \text{ and } \quad \frac{\abs[0]{I(f) - Q_{X_N}(f)}}{ R_\textsm{BC}(f,X_N)} \lesssim N^{1/2}.
  \end{equation}
\end{theorem}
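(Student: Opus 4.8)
The plan is to reduce both standard scores to the single ratio $\norm[0]{f}_{\mathcal{H}_K(\Omega)} / \sigma_\ML(f,X_N)$ and then invoke the lower bound on the maximum likelihood estimate already furnished by Proposition~\ref{prop:MLE-general}. Since $f \in \mathcal{H}_K(\Omega)$, the deterministic worst-case error bounds~\eqref{eq:WCE-bounds} apply, and the key observation is that the conditional standard deviation appearing in the numerator of~\eqref{eq:WCE-bounds} cancels against the same factor hidden inside $R_\GP$ and $R_\BC$. Concretely, for every $x \in \Omega$ with $P_{X_N}(x,x) \neq 0$,
\begin{equation*}
  \frac{\abs[0]{f(x) - s_{f,X_N}(x)}}{R_\GP(x,f,X_N)} = \frac{\abs[0]{f(x) - s_{f,X_N}(x)}}{\sigma_\ML(f,X_N)\, P_{X_N}(x,x)^{1/2}} \leq \frac{\norm[0]{f}_{\mathcal{H}_K(\Omega)}}{\sigma_\ML(f,X_N)}.
\end{equation*}
Crucially the right-hand side is independent of $x$, so it simultaneously controls the supremum over $x \in \Omega$. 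The integration score is handled identically, with $V_{X_N}^{1/2}$ playing the role of $P_{X_N}(x,x)^{1/2}$ and cancelling against the corresponding factor in $R_\BC$.

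Next I would supply the lower bound on the MLE. The hypotheses of the theorem, namely that $x^* \in X_N$ for all large $N$ with $f(x^*) \neq 0$, are exactly those of the second assertion of Proposition~\ref{prop:MLE-general}, which gives $\sigma_\ML(f,X_N) \asymp N^{-1/2}$ and in particular $\sigma_\ML(f,X_N) \gtrsim N^{-1/2}$. Substituting this into the displayed estimate yields $\norm[0]{f}_{\mathcal{H}_K(\Omega)} / \sigma_\ML(f,X_N) \lesssim N^{1/2}$, because $\norm[0]{f}_{\mathcal{H}_K(\Omega)}$ is a fixed finite constant (finite precisely because $f \in \mathcal{H}_K(\Omega)$). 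Taking the supremum over $x$ establishes the first inequality of~\eqref{eq:RKHS-N12}, and the second follows verbatim.

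There is no genuine obstacle here — the quantitative content sits entirely in Proposition~\ref{prop:MLE-general} — so the only points requiring care are the degenerate cases excluded by $P_{X_N}(x,x) \neq 0$ above. When $x \in X_N$ the interpolation property gives $f(x) = s_{f,X_N}(x)$ and $P_{X_N}(x,x) = 0$, so the score is $0/0 = 1$ by the stated convention and is trivially $\lesssim N^{1/2}$; positive-definiteness of $K$ ensures $P_{X_N}(x,x) > 0$ for all remaining $x$, so the cancellation is legitimate wherever it is used. An analogous remark disposes of the possibility $V_{X_N} = 0$ for the integral. Thus the argument reduces to the cancellation of the worst-case error factor against the conditional standard deviation, combined with the lower bound $\sigma_\ML(f,X_N) \gtrsim N^{-1/2}$ already in hand.
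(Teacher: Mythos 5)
Your proposal is correct and follows essentially the same route as the paper's own proof: apply the worst-case error bound~\eqref{eq:WCE-bounds} to cancel the conditional standard deviation against the same factor in $R_\GP$ (resp.\ $R_\BC$), then invoke the lower bound $\sigma_\ML(f,X_N) \gtrsim N^{-1/2}$ from Proposition~\ref{prop:MLE-general}. Your additional remarks on the degenerate cases $P_{X_N}(x,x)=0$ and $V_{X_N}=0$ are consistent with the convention the paper adopts and only make the argument slightly more explicit.
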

\begin{proof} From~\eqref{eq:WCE-bounds} we know that $\abs[0]{f(x) - s_{f,X_N}(x)} \leq \norm[0]{f}_{\mathcal{H}_K(\Omega)} P_{X_N}(x,x)^{1/2}$ for every $x \in \Omega$ if $f \in \mathcal{H}_K(\Omega)$. By this estimate and Proposition~\ref{prop:MLE-general},
  \begin{equation*}
    \frac{ \abs[0]{f(x) - s_{f,X_N}(x)}}{ R_\textsm{GP}(x,f,X_N)} = \frac{ \abs[0]{f(x) - s_{f,X_N}(x)}}{ \sigma_\textsm{ML}(f,X_N) P_{X_N}(x,x)^{1/2}} \leq \frac{ \norm[0]{f}_{\mathcal{H}_K(\Omega)} }{\sigma_\textsm{ML}(f,X_N)} \asymp N^{1/2}.
  \end{equation*}
  The argument for integration is identical.
\end{proof}

The interpretation of Theorem~\ref{thm:coverage-RKHS} is that a GP model can become at worst slowly overconfident, in the sense that the credible sets are asymptotically $O(N^{1/2})$ times narrower than they would be if the model was asymptotically honest. 
After the present work was completed, a closely related result appeared as Proposition~3.1 in \citet{Wang2020}.

\begin{remark} \cite{Szabo2015} included a blow-up factor $L_N > 0$ in the studied credible sets, which in our setting is equivalent to using the scale parameter \sloppy{${\sigma = L_N \sigma_\ML(f,X_N)}$}. 
If~$L_N$ is set to grow sufficiently fast, our results guarantee that the model is not asymptotically overconfident.
For example, if $L_N \gtrsim N^{1/2}$ a modification of Theorem~\ref{thm:coverage-RKHS} would state that the standard scores are $O(1)$.
It is not clear to us that such artificial inflation of $\sigma$ can be statistically justified.
\end{remark}

Theorem~\ref{thm:coverage-RKHS} establishes only upper bounds on standard scores and it does not follow that there is a function for which the model is asymptotically overconfident---let alone that this is the case for all functions in the RKHS.
In fact, the upper bounds~\eqref{eq:RKHS-N12} can be improved to $\norm[0]{f-s_{f,X_N}}_{\mathcal{H}_K(\Omega)} N^{1/2}$ by the use of improved versions~\citep[e.g.,][p.\ 192]{Wendland2005} of the generic error estimates~\eqref{eq:WCE-bounds}:
\begin{subequations} \label{eq:improved-rates-generic}
  \begin{align}
  \abs[0]{ f(x) - s_{f,X}(x) } &\leq \norm[0]{f-s_{f,X}}_{\mathcal{H}_K(\Omega)} P_X(x,x)^{1/2}, \\
  \abs[0]{ I(f) - Q_X(f) } &\leq \norm[0]{f-s_{f,X}}_{\mathcal{H}_K(\Omega)} V_X^{1/2}.
  \end{align}
\end{subequations}
If the RKHS error $\norm[0]{f-s_{f,X_N}}_{\mathcal{H}_K(\Omega)}$ decays sufficiently fast it can be established that the model is not asymptotically overconfident.
Although it is known that $\norm[0]{f-s_{f,X_N}}_{\mathcal{H}_K(\Omega)} \to 0$ as $N \to \infty$ if the kernel is continuous and the point-set sequence $(X_N)_{N=1}^\infty$ is space-filling (in the sense that the fill-distance, to defined in Section~\ref{sec:Sobolev-rates}, decays to zero), this convergence in the RKHS norm can be arbitrarily slow~\citep[Theorem~8.37 and~Exercise~8.64]{Iske2018}.
It is therefore interesting to ask whether there is a well-characterised subset of the RKHS for which the GP model is not asymptotically overconfident.
Such a subset is identified next.

\subsection{Asymptotic Underconfidence for a Subset of the RKHS}

In this section we characterise a subset of the RKHS, related to an $L^2(\Omega)$ integral operator, where the true approximation error can be shown to decay faster than the width of the credible set.
If $\Omega \subset \R^d$ is compact and the kernel $K$ continuous, it follows that the integral operator $T \colon L^2(\Omega) \to L^2(\Omega)$ defined as
\begin{equation} \label{eq:integral-operator}
  Tg(x) \coloneqq \int_\Omega g(y) K(x,y) \dif y \quad \text{ for } \quad g \in L^2(\Omega)
\end{equation}
is self-adjoint and compact. 
By the spectral theorem there exists a sequence of positive and decreasing eigenvalues $(\lambda_n)_{n=1}^\infty$ and corresponding eigenfunctions $(\varphi_n)_{n=1}^\infty \subset L^2(\Omega)$ such that $T\varphi_n = \lambda_n \varphi_n$. 
Since $K$ is assumed continuous, Mercer's theorem implies that $(\varphi_n)_{n=1}^\infty$ form an orthonormal basis of $L^2(\Omega)$ and $(\lambda_n^{1/2} \varphi_n)_{n=1}^\infty$ form an orthonormal basis of $\mathcal{H}_K(\Omega)$ when each~$\varphi_n$ is uniquely identified with a continuous element of the RKHS.
Therefore the kernel has the uniformly convergent expansion $K(x,y) = \sum_{n=1}^\infty \lambda_n \varphi_n(x) \varphi_n(y)$ on $\Omega \times \Omega$ and the RKHS is
\begin{equation*}
  \mathcal{H}_K(\Omega) = \Set[\Bigg]{ g \in L^2(\Omega)}{ \sum_{n=1}^\infty \frac{\inprod{g}{\varphi_n}_{L^2(\Omega)}^2}{\lambda_n} < \infty}.
\end{equation*}
It can be then shown that the range of $T$ is 
\begin{equation} \label{eq:T-image}
  T(L^2(\Omega)) = \Set[\Bigg]{ g \in L^2(\Omega)}{ \sum_{n=1}^\infty \frac{\inprod{g}{\varphi_n}_{L^2(\Omega)}^2}{\lambda_n^2} < \infty } \subset \mathcal{H}_K(\Omega).
\end{equation}
It is easy to prove that the error estimates~\eqref{eq:WCE-bounds} can be improved if $f$ is in $T(L^2(\Omega))$~\citep[Section~11.5]{Wendland2005}. Namely, if there is $v \in L^2(\Omega)$ such that $f = Tv$, then
\begin{equation*}
  \norm[0]{f - s_{f,X}}_{\mathcal{H}_K(\Omega)} \leq \bigg( \int_\Omega P_X(x,x) \dif x \bigg)^{1/2} \norm[0]{v}_{L^2(\Omega)} \eqqcolon \norm[0]{P_X^{1/2}}_{L^2(\Omega)} \norm[0]{v}_{L^2(\Omega)}
\end{equation*}
and therefore by~\eqref{eq:improved-rates-generic} the error estimates become
\begin{subequations} \label{eq:improved-rates}
  \begin{align}
    \abs[0]{f(x) - s_{f,X}(x)} &\leq P_{X}(x,x)^{1/2} \norm[0]{P_X^{1/2}}_{L^2(\Omega)} \norm[0]{v}_{L^2(\Omega)}, \\
      \abs[0]{I(f) - Q_X(f)} &\leq V_X^{1/2} \norm[0]{P_X^{1/2}}_{L^2(\Omega)} \norm[0]{v}_{L^2(\Omega)}.
    \end{align}
\end{subequations}
The standard convergence rates are thus effectively squared, this being occasionally referred to as \emph{superconvergence}~\citep{Schaback2018}. See \cite{Schaback1999,Schaback2000}; \citet[Section~9.4.3]{FasshauerMcCourt2015}; and \citet[Section~5]{Bach2017} for additional results and discussion and \citet[Section~6.2]{Kanagawa2019} for numerical examples.
Also note the connection of the space~\eqref{eq:T-image} to powers of RKHSs~\citep{SteinwartScovel2012} and Hilbert scales~\citep[Appendix~A.1.3]{DashtiStuart2017}.
Unfortunately, the argument that yields the improved rates~\eqref{eq:improved-rates} does not appear amenable to handling more general subspaces of $\mathcal{H}_K(\Omega)$.

By replacing~\eqref{eq:WCE-bounds} with~\eqref{eq:improved-rates} in the proof of Theorem~\ref{thm:coverage-RKHS} we establish that the GP model is asymptotically underconfident for $f \in T(L^2(\Omega))$.

\begin{theorem}[Asymptotic underconfidence for sufficiently regular functions] \label{thm:underconfidence-RKHS} Suppose that \sloppy{${\Omega \subset \R^d}$} is compact, $K$ is continuous, $f \in T(L^2(\Omega)) \subset \mathcal{H}_K(\Omega)$, and there is $x^* \in \Omega$ such that \sloppy{${f(x^*) \neq 0}$} and $x^* \in X_N$ for all sufficiently large $N$.
    Then
    \begin{equation*}
       \sup_{x \in \Omega \setminus X_N} \! \frac{ \abs[0]{ f(x) - s_{f,X_N}(x)}}{ R_\textsm{GP}(x,f,X_N)} \lesssim N^{1/2} \norm[0]{P_{X_N}^{1/2}}_{L^2(\Omega)}  \:\: \text{ and } \:\: \frac{\abs[0]{I(f) - Q_{X_N}(f)}}{ R_\textsm{BC}(f,X_N)} \lesssim N^{1/2} \norm[0]{P_{X_N}^{1/2}}_{L^2(\Omega)}. 
    \end{equation*}
    That is, the model is asympotically underconfident if the sequence \sloppy{${(X_N)_{N=1}^\infty \subset \Omega}$} is such that $N^{1/2} \norm[0]{P_{X_N}^{1/2}}_{L^2(\Omega)} \to 0$ as $N \to \infty$.
\end{theorem}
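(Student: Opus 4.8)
The plan is to mimic the proof of Theorem~\ref{thm:coverage-RKHS} almost verbatim, substituting the generic worst-case bounds~\eqref{eq:WCE-bounds} with the sharper superconvergence estimates~\eqref{eq:improved-rates} that become available precisely because $f \in T(L^2(\Omega))$. The only difference from Theorem~\ref{thm:coverage-RKHS} will be an additional multiplicative factor $\norm[0]{P_{X_N}^{1/2}}_{L^2(\Omega)}$ on the right-hand side, and this extra factor is exactly what forces the standard scores to zero once it decays faster than $N^{-1/2}$.

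First I would check that the hypotheses supply everything required to invoke Proposition~\ref{prop:MLE-general}. The inclusion $T(L^2(\Omega)) \subset \mathcal{H}_K(\Omega)$ recorded in~\eqref{eq:T-image} guarantees $f \in \mathcal{H}_K(\Omega)$, and the existence of a point $x^* \in X_N$ with $f(x^*) \neq 0$ is assumed outright; hence $\sigma_\textsm{ML}(f,X_N) \asymp N^{-1/2}$. I would then fix $v \in L^2(\Omega)$ with $f = Tv$, which exists by definition of $T(L^2(\Omega))$, so that the pointwise estimate in~\eqref{eq:improved-rates} is applicable to $f$.

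The heart of the argument is a single line. For any $x \in \Omega \setminus X_N$ one has $P_{X_N}(x,x) > 0$ by strict positive-definiteness of $K$, so dividing the first inequality of~\eqref{eq:improved-rates} by $R_\textsm{GP}(x,f,X_N) = \sigma_\textsm{ML}(f,X_N) P_{X_N}(x,x)^{1/2}$ cancels the factor $P_{X_N}(x,x)^{1/2}$ and leaves
\begin{equation*}
  \frac{\abs[0]{f(x) - s_{f,X_N}(x)}}{R_\textsm{GP}(x,f,X_N)} \leq \frac{\norm[0]{P_{X_N}^{1/2}}_{L^2(\Omega)} \norm[0]{v}_{L^2(\Omega)}}{\sigma_\textsm{ML}(f,X_N)} \asymp N^{1/2} \norm[0]{P_{X_N}^{1/2}}_{L^2(\Omega)}.
\end{equation*}
Since the right-hand side carries no dependence on $x$, passing to the supremum over $\Omega \setminus X_N$ is immediate. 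The integration bound follows identically upon dividing the second inequality of~\eqref{eq:improved-rates} by $R_\textsm{BC}(f,X_N) = \sigma_\textsm{ML}(f,X_N) V_{X_N}^{1/2}$ and cancelling $V_{X_N}^{1/2}$. The closing claim of asymptotic underconfidence is then a direct reading of the definition~\eqref{eq:underconfident}: whenever $N^{1/2} \norm[0]{P_{X_N}^{1/2}}_{L^2(\Omega)} \to 0$, both standard scores tend to zero.

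I do not anticipate a genuine obstacle, because the substantive work—the superconvergence estimate~\eqref{eq:improved-rates} and the sharp two-sided MLE rate of Proposition~\ref{prop:MLE-general}—is already established. The only points that warrant care are bookkeeping: verifying that $f \in T(L^2(\Omega))$ legitimately feeds into Proposition~\ref{prop:MLE-general} through the inclusion~\eqref{eq:T-image}, and noting that it is the cancellation of $P_{X_N}(x,x)^{1/2}$ (respectively $V_{X_N}^{1/2}$) that makes the bound \emph{uniform} in $x$ and yields a clean $N^{1/2}$ prefactor rather than an $x$-dependent one. The restriction of the supremum to $\Omega \setminus X_N$ is what allows this cancellation, since at points of $X_N$ the denominator vanishes and the convention $0/0 = 1$ applies.
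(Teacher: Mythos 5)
Your proposal is correct and follows essentially the same route as the paper's own proof: substitute the superconvergence estimates~\eqref{eq:improved-rates} for the generic bounds~\eqref{eq:WCE-bounds} in the argument of Theorem~\ref{thm:coverage-RKHS}, invoke Proposition~\ref{prop:MLE-general} for $\sigma_\ML(f,X_N) \asymp N^{-1/2}$ via the inclusion $T(L^2(\Omega)) \subset \mathcal{H}_K(\Omega)$, and cancel the factor $P_{X_N}(x,x)^{1/2}$ (respectively $V_{X_N}^{1/2}$). No gaps.
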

\begin{proof} 
Let $f = Tv$ for $v \in L^2(\Omega)$. By using~\eqref{eq:improved-rates} instead of~\eqref{eq:WCE-bounds} in the proof of Theorem~\ref{thm:coverage-RKHS} we get
  \begin{equation*}
     \sup_{x \in \Omega \setminus X_N} \! \frac{ \abs[0]{ f(x) - s_{f,X_N}(x)}}{ R_\textsm{GP}(x,f,X_N)} \leq \frac{ \norm[0]{P_{X_N}^{1/2}}_{L^2(\Omega)} \norm[0]{v}_{L^2(\Omega)}}{\sigma_\ML(f,X_N)} \lesssim N^{1/2} \norm[0]{P_{X_N}^{1/2}}_{L^2(\Omega)}. 
  \end{equation*}
  The supremum is over $x \notin X_N$ because for $x \in X_N$ we have defined the standard score to be one.
  The argument for integration is identical.
\end{proof}

If $(X_N)_{N=1}^\infty$ are quasi-uniform (see Section~\ref{sec:Sobolev-rates} for details), then $N^{1/2} \norm[0]{P_{X_N}^{1/2}}_{L^2(\Omega)} \to 0$ is true, for example, when $K$ is one of the popular infinitely smooth kernels associated with super-algebraic rates of convergence such as a Gaussian or an inverse multiquadric~\citep{RiegerZwicknagl2010}. 
A specialisation to Sobolev kernels will be given in Section~\ref{sec:coverage-sobolev}.

\section{Sobolev Kernels and Functions Outside the RKHS} \label{sec:sobolev-kernels}

This section extends the results of Section~\ref{sec:general-kernels} for functions outside the RKHS when the kernel $K$ is a Sobolev kernel.

\subsection{Sobolev Spaces and Kernels} \label{sec:sobolev-spaces}

Let $\widehat{g}(\xi) \defeq \int_{\R^d} g(x) \mathrm{e}^{-\mathrm{i} \b{\xi}^\T \b{x}} \dif \b{x}$ denote the Fourier transform of $g \in L^1(\R^d)$. The Sobolev space $W_2^\alpha(\R^d)$ of order $\alpha \geq 0$ is the Hilbert space
\begin{equation*}
  W_2^\alpha(\R^d) \defeq \Set[\bigg]{g \in L^2(\R^d)}{ \int_{\R^d} \big( 1 + \norm[0]{\b{\xi}}^2 \big)^\alpha \abs[0]{\widehat{g}(\b{\xi})}^2 \dif \b{\xi} < \infty}
\end{equation*}
equipped with the inner product
\begin{equation*}
  \inprod{h}{g}_{W_2^\alpha(\R^d)} \defeq \int_{\R^d} \big( 1 + \norm[0]{\b{\xi}}^2 \big)^\alpha \, \widehat{h}(\b{\xi}) \overline{\widehat{g}(\b{\xi})} \dif \b{\xi},
\end{equation*}
where $\bar{z}$ is the complex conjugate of $z \in \mathbb{C}$.
When $\alpha \in \N$, the space $W_2^\alpha(\R^d)$ can be equivalently defined as consisting of those functions whose weak derivatives up to order $\alpha$ exist and are in $L^2(\R^d)$. For $\alpha \notin \N$, $W_2^\alpha(\R^d)$ can also be defined as an interpolation or Besov space,  up to equivalent norms~\citep[e.g.,][]{Triebel2006}.
If $\alpha > d/2$, then every element of $W_2^\alpha(\mathbb{R}^d)$ can be uniquely identified with a continuous function from its $L^2(\R^d)$ equivalence class and $W_2^\alpha(\mathbb{R}^d)$ can be viewed as an RKHS of continuous functions on $\R^d$.
This identification will be implicitly assumed throughout the article.

Let $\Omega \subset \R^d$ be Lebesgue measurable and let $W_2^\alpha(\Omega)$ be the restriction of $W_2^\alpha(\R^d)$ to $\Omega$, as defined in Section~\ref{subsec: RKHS introduce}.
We say that a kernel $K \colon \Omega \times \Omega \to \R$ is a \emph{Sobolev kernel of order} $\alpha > d/2$ (on $\Omega$) if its RKHS $\mathcal{H}_K(\Omega)$ is \emph{norm-equivalent} to $W_2^\alpha(\Omega)$.
That is, $\mathcal{H}_K(\Omega)$ equals $W_2^\alpha(\Omega)$ as a set of functions and there exist positive constants $C_K$ and $C_K'$ such that
\begin{equation} \label{eq:norm-equivalence}
  C_{K} \norm[0]{g}_{W_2^\alpha(\Omega)} \leq \norm[0]{g}_{\mathcal{H}_K(\Omega)} \leq C_K' \norm[0]{g}_{W_2^\alpha(\Omega)}
\end{equation}
for all $g \in \mathcal{H}_K(\Omega)$.
Stationary kernels with prescribed Fourier decay form an important subclass of Sobolev kernels: if there is $\Phi \colon \R^d \to \R$ such that $K(x,y) = \Phi(x-y)$ and
\begin{equation*}
  C_1 \big(1 + \norm[0]{\xi}^2 \big)^{-\alpha} \leq \widehat{\Phi}(\xi) \leq C_2 \big(1 + \norm[0]{\xi}^2 \big)^{-\alpha} \quad \text{ for some $C_1, C_2 > 0$ and all $\xi \in \R^d$},
\end{equation*}
then $K$ is a Sobolev kernel of order $\alpha$ and $\mathcal{H}_K(\mathbb{R}^d)$ is norm-equivalent to $W_2^\alpha(\mathbb{R}^d)$.
Perhaps the most ubiquitous Sobolev kernels are the Matérn kernels
\begin{equation} \label{eq:matern}
  K_{\nu,\ell}(x, y) = \frac{2^{1-\nu}}{\Gamma(\nu)} \bigg( \frac{\sqrt{2\nu} \norm[0]{x-y}}{\ell} \bigg)^\nu \mathrm{K}_\nu \bigg( \frac{\sqrt{2\nu} \norm[0]{x-y}}{\ell} \bigg),
\end{equation}
where $\nu > 0$ is a smoothness parameter, $\ell > 0$ a length-scale parameter, $\Gamma$ the Gamma function, and $\mathrm{K}_\nu$ the modified Bessel function of the second kind of order $\nu$. The Fourier transform of a Matérn kernel is~\citep[p.\@~49]{Stein1999}
\begin{equation} \label{eq:matern-fourier}
  K_{\nu,\ell}(x,y) = \Phi_{\nu,\ell}(x-y), \quad \quad \widehat{\Phi}_{\nu,\ell}(\xi) = \frac{\Gamma(\nu + d/2)}{ \pi^{d/2} \Gamma(\nu)} \bigg( \frac{2\nu}{\ell^2} \bigg)^\nu \bigg( \frac{2\nu}{\ell^2} + \norm[0]{\xi}^2 \bigg)^{-(\nu+d/2)},
\end{equation}
and its RKHS is thus norm-equivalent to the Sobolev space $W_2^{\alpha}(\R^d)$ with $\alpha = \nu + d/2$. See \citet[Chapter~10]{Wendland2005} for proofs and further detail.

Functions that lie on the ``boundary'' of a Sobolev space play an important role in our analysis. For this purpose, define the sets
\begin{align*}
  S_-^\alpha(\R^d) & \defeq \Set[\big]{ g \in L^2(\R^d)}{ \abs[0]{\widehat{g}(\xi)}^2 \lesssim \big(1 + \norm[0]{\xi}^2 \big)^{-(\alpha + d/2)} }, \\
  S_+^\alpha(\R^d) & \defeq \Set[\big]{ g \in L^2(\R^d)}{ \abs[0]{\widehat{g}(\xi)}^2 \gtrsim \big(1 + \norm[0]{\xi}^2 \big)^{-(\alpha + d/2)} },
\end{align*}
and
\begin{equation} \label{eq:S-definition}
  S^\alpha(\R^d) \defeq S_-^\alpha(\R^d) \cap S_+^\alpha(\R^d).
\end{equation}
From the fact that $\int_{\R^d} ( 1 + \norm[0]{\xi}^2 )^\alpha ( 1 + \norm[0]{\xi}^2 )^{-(\beta + d/2)} \dif \xi$ is finite if and only if $\beta > \alpha$ it follows that $S^\beta(\R^d)$ and $S_-^\beta(\R^d)$ are subsets of $W_2^\alpha(\R^d)$ if and only if $\beta > \alpha$.
Similarly, $S_+^\beta(\R^d) \cap W_2^\alpha(\R^d)$ is non-empty if and only if $\beta > \alpha$, and it may therefore be helpful to think of $S_+^\alpha(\R^d)$ as approximately the collection of square-integrable functions that are not in $W_2^\alpha(\R^d)$.
A function $g \colon \Omega \to \R$ is said to be in $S_-^\alpha(\Omega)$ ($S_+^\alpha(\Omega)$) if it has an extension $g_0 \in S_-^\alpha(\R^d)$ ($g_0 \in S_+^\alpha(\R^d)$).
As an aside, we note the similarity of these sets to the sequence hyperrectangles analysed in \cite{Szabo2013,Szabo2015} and \cite{HadjiSzabo2019}.

\subsection{Motivation: Sample Path Properties of Gaussian Processes} \label{sec:sample-paths}

In this article the function $f$ is fixed, but nevertheless it seems reasonable that a statistical estimation method based on a GP model ought to perform well when the assumptions of the GP model are satisfied. 
This motivates us to consider the regularity of samples from the GP model, which will later form the basis of regularity assumptions on $f$.
The most important results relating the samples and the RKHS are the following~\citep[for a recent review, see][]{Kanagawa2018}:
\begin{itemize}
\item If $\mathcal{H}_K(\Omega)$ is infinite-dimensional, then the sample paths of the GP belong to $\mathcal{H}_K(\Omega)$ with probability 0. In general, the samples being contained in the RKHS of a different kernel $R$  with probability 0 or 1 depends on whether or not a certain nuclear dominance condition between the kernels $K$ and $R$ holds~\citep{Driscoll1973,LukicBeder2001}.
\item If $K$ is a Sobolev kernel of order $\alpha > d/2$, then the GP sample paths are in $W_2^\beta(\Omega)$ with probability 1 if $\beta < \alpha - d/2$ and with probability 0 if $\beta \geq \alpha - d/2$~\citep{Scheuerer2010,Steinwart2017}.
\end{itemize}
The latter result essentially says that for Sobolev kernels the sample paths are rougher than elements in $\mathcal{H}_K(\Omega)$ by order $d/2$. Furthermore, it follows that sample paths are in the set
\begin{equation} \label{eq:sample-path-epsilon}
  W_2^{\alpha-d/2-\varepsilon}(\Omega) \setminus W_2^{\alpha-d/2}(\Omega)
\end{equation}
with probability 1 for any $\varepsilon > 0$.
We are not aware of more advanced developments than this but, encouraged by~\eqref{eq:sigma-ML-samples}, conjecture that the set $S^{\alpha-d/2}(\Omega)$, which is a subset of~\eqref{eq:sample-path-epsilon} for any $\varepsilon > 0$, is in some sense the smallest set (or closely related to such a set) that contains almost all sample paths of a GP with a Sobolev covariance kernel of order $\alpha$.

\subsection{Error Estimates for Sobolev Kernels} \label{sec:Sobolev-rates}

In this section we present bounds on the GP approximation and integration errors and sharp rates (i.e., the upper and lower bounds are of matching order) of decay of $\sup_{ \b{x} \in \Omega} P_{X}(\b{x},\b{x})^{1/2}$ and $V_X^{1/2}$ when $K$ is a Sobolev kernel; these will be used to study the maximum likelihood estimator in Section~\ref{sec:MLE-Sobolev}.
Define the \emph{fill-distance} $h_X$ and the \emph{separation radius} $q_X$ of a set of distinct points $X = \{x_1, \ldots, x_N\} \subset \Omega$ as 
\begin{equation*}
  h_X \defeq \sup_{ x \in \Omega} \, \min_{i=1,\ldots,N} \, \norm[0]{ x - x_i} \quad \text{ and } \quad q_X \defeq \frac{1}{2} \min_{ i \neq j} \, \norm[0]{x_i - x_j}.
\end{equation*}
Also define the \emph{mesh ratio} $\rho_X \defeq h_X / q_X \geq 1$. 
A sequence $(X_N)_{N=1}^\infty \subset \Omega$ is \emph{quasi-uniform} if $\rho_{X_N} \lesssim 1$, which implies that $q_{X_N} \asymp h_{X_N} \asymp N^{-1/d}$~\citep[Proposition~14.1]{Wendland2005}.

The domain $\Omega \subset \R^d$ will often be assumed to satisfy the following requirement, which will be made explicit when required.

\begin{assumption} \label{ass:Omega} The set $\Omega \subset \R^d$ is bounded and connected, has a non-empty interior and a Lipschitz boundary, and satisfies an interior cone condition.
\end{assumption}

The Lipschitz boundary condition says that the boundary is sufficiently regular in that it is locally the graph of a Lipschitz function, while the interior cone condition prohibits the existence of pinch points; for technical definitions see for example~\citet[Section~3]{Kanagawa2019}. These conditions are standard in the theory of Sobolev spaces and error analysis of kernel-based approximation methods.\footnote{In the results we cite it is often assumed that $\Omega$ is open. Because these results provide bounds on $L^p(\Omega)$ norms and a Lipschitz boundary is of measure zero, they remain valid whenever $\Omega$ has a non-empty interior.}
In particular, they guarantee that various different notions of integer and fractional order Sobolev spaces defined on $\Omega$ result in identical function spaces up to equivalent norms.
Assumption~\ref{ass:Omega} is satisfied by all typical domains and in particular by $\Omega = [0,1]^d$, which is used in the numerical examples in Section~\ref{sec:examples}.

The following theorem provides bounds on the approximation and integration error by a GP conditional mean when the kernel is Sobolev and $f$ does not necessarily lie in the RKHS. 
The theorem as we state it is a consequence of results in the scattered data approximation literature~\citep{WendlandRieger2005,Narcowich2006}.
For completeness and to simplify later developments the proof is provided in Appendix~\ref{sec:proofs}.

\begin{theorem} \label{thm:sobolev-rates} Let $\alpha \geq \beta$ and $\floor{\beta} > d/2$. Suppose that $\Omega \subset \R^d$ satisfies Assumption~\ref{ass:Omega} and $K$ is a Sobolev kernel of order $\alpha$. If $f \in W_2^\beta(\Omega)$, then there are $C_1, C_2, h_0 > 0$, which do not depend on $f$ or $X$, such that
  \begin{equation*}
    \sup_{ x \in \Omega} \, \abs[0]{ f(x) - s_{f,X}(x) } \leq C_1 h_X^{\beta-d/2} \rho_X^{\alpha-\beta} \norm[0]{f}_{W_2^\beta(\Omega)} \: \text{ and } \: \abs[0]{I(f) - Q_X(f)} \leq C_2 h_X^\beta \rho_X^{\alpha-\beta} \norm[0]{f}_{W_2^\beta(\Omega)}
  \end{equation*}
  whenever $h_X \leq h_0$.
  For a quasi-uniform sequence $(X_N)_{N=1}^\infty \subset \Omega$ these bounds become
  \begin{equation*}
    \sup_{x \in \Omega} \, \abs[0]{ f(x) - s_{f,X_N}(x) } \lesssim N^{-\beta/d+1/2} \norm[0]{f}_{W_2^\beta(\Omega)} \quad \text{ and } \quad \abs[0]{I(f) - Q_{X_N}(f)} \lesssim N^{-\beta/d} \norm[0]{f}_{W_2^\beta(\Omega)}.
  \end{equation*}
\end{theorem}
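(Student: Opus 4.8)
The plan is to reduce both estimates to a single bound on the interpolation error $g \defeq f - s_{f,X}$. Because $s_{f,X}$ interpolates $f$ on $X$, the error $g$ vanishes on $X$, and since $\floor{\beta} > d/2$ the space $W_2^\beta(\Omega)$ embeds into the continuous functions, so the pointwise and $L^\infty$ quantities in the statement are meaningful. The whole argument then rests on two ingredients: a \emph{sampling inequality} that exploits the zeros of $g$, and a \emph{stability estimate} that controls the $W_2^\beta(\Omega)$ norm of $g$ even though $f$ may lie outside the native space $\mathcal{H}_K(\Omega)$.

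First I would invoke the sampling inequalities of \citet{WendlandRieger2005} and \citet{Narcowich2006}: under Assumption~\ref{ass:Omega} there exist $h_0 > 0$ and constants, depending only on $\Omega$, $\alpha$, $\beta$, and $d$ (hence independent of $f$ and $X$), such that every $u \in W_2^\beta(\Omega)$ with $u|_X = 0$ obeys
\[
  \norm[0]{u}_{L^\infty(\Omega)} \lesssim h_X^{\beta-d/2} \norm[0]{u}_{W_2^\beta(\Omega)} \quad \text{and} \quad \norm[0]{u}_{L^2(\Omega)} \lesssim h_X^{\beta} \norm[0]{u}_{W_2^\beta(\Omega)}
\]
whenever $h_X \le h_0$; the hypothesis $\floor{\beta} > d/2$ guarantees the local polynomial reproduction of degree $\floor{\beta}$ on which these inequalities rest. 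Applied to $u = g$, the first inequality gives the approximation bound, and the second, combined with $\abs[0]{I(f) - Q_X(f)} = \abs[0]{I(g)} \le \norm[0]{w}_{L^2(\Omega)} \norm[0]{g}_{L^2(\Omega)}$ (Cauchy--Schwarz, using that $w$ is bounded and that $\Omega$ is bounded hence of finite measure), gives the integration bound. This also explains the extra factor $h_X^{d/2}$ enjoyed by integration: it is exactly the gain of the $L^2$ sampling inequality over the $L^\infty$ one.

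The hard part will be the stability estimate
\[
  \norm[0]{f - s_{f,X}}_{W_2^\beta(\Omega)} \lesssim \rho_X^{\alpha-\beta}\norm[0]{f}_{W_2^\beta(\Omega)},
\]
which is the sole source of the mesh-ratio factor and the only place where the possibility that $f$ lies \emph{outside} the native space $\mathcal{H}_K(\Omega)$, which is norm-equivalent to $W_2^\alpha(\Omega)$ (possible because $\beta \le \alpha$), causes real difficulty. When $\beta < \alpha$ the minimal-norm property~\eqref{eq:s-minimal} does \emph{not} bound $\norm[0]{s_{f,X}}_{\mathcal{H}_K(\Omega)}$ by any finite multiple of $\norm[0]{f}_{W_2^\beta(\Omega)}$; indeed this native-space norm typically diverges as $X$ fills $\Omega$. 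The displayed bound is equivalent to the $W_2^\beta$-stability of the interpolation operator, $\norm[0]{s_{f,X}}_{W_2^\beta(\Omega)} \lesssim \rho_X^{\alpha-\beta}\norm[0]{f}_{W_2^\beta(\Omega)}$, which I would import from \citet{Narcowich2006}. Its proof rests on a Bernstein-type inverse inequality for the finite-dimensional interpolation space $\mathrm{span}\{K(\cdot,x_i)\}_{i=1}^N$, controlled through the separation radius $q_X$; pairing the resulting power $q_X^{-(\alpha-\beta)}$ with the fill-distance $h_X$ supplied by the approximation side yields exactly one power of $\rho_X = h_X/q_X$ for each order of oversmoothing, i.e.\ the factor $\rho_X^{\alpha-\beta}$.

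Finally I would assemble the constants---all of which are uniform in $f$ and $X$ by the two cited results---to obtain the non-asymptotic estimates, and then pass to the asymptotic form. For a quasi-uniform sequence one has $h_{X_N} \asymp q_{X_N} \asymp N^{-1/d}$ and $\rho_{X_N} \lesssim 1$ by \citet[Proposition~14.1]{Wendland2005}, whence $h_{X_N}^{\beta-d/2}\rho_{X_N}^{\alpha-\beta} \asymp N^{-\beta/d+1/2}$ and $h_{X_N}^{\beta}\rho_{X_N}^{\alpha-\beta} \asymp N^{-\beta/d}$, which are the claimed rates.
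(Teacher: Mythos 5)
Your proposal is correct and follows essentially the same route as the paper's proof: a sampling inequality of Wendland--Rieger type reduces both claims to controlling $\norm[0]{f-s_{f,X}}_{W_2^\beta(\Omega)}$, and the mesh-ratio factor $\rho_X^{\alpha-\beta}$ comes from the Sobolev stability estimate of Narcowich, Ward, and Wendland, which the paper rederives (via the band-limited quasi-interpolant of Lemma~\ref{lemma:bernstein} and an operator-interpolation argument) only so as to track how the constants depend on $\beta$, which is needed later for Theorem~\ref{thm:sobolev-rates-S}. The only cosmetic divergence is that you bound the integration error through $L^2(\Omega)$ and Cauchy--Schwarz where the paper uses the $p=1$ sampling inequality and the boundedness of $w$; both yield the exponent $h_X^\beta$.
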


See \citet{Arcangeli2007,Arcangeli2012} and \citet{Wynne2019} for a collection of marginally more general versions of Theorem~\ref{thm:sobolev-rates}.
  These generalisations are not used here because proofs of some of the results in Section~\ref{sec:MLE-Sobolev} require understanding of the dependency, which is much less transparent in the generalisations, on the Sobolev smoothness parameters of the constants~$C_1$ and $C_2$.
The following extension for $f \in S_-^\beta(\Omega)$, that we have not found in the literature, will be useful. Its proof is given in Appendix~\ref{sec:proofs}.

\begin{theorem} \label{thm:sobolev-rates-S} Suppose that the other assumptions of Theorem~\ref{thm:sobolev-rates} are satisfied but $f \in S_-^\beta(\Omega)$. Then for a quasi-uniform sequence $(X_N)_{N=1}^\infty \subset \Omega$,
\begin{equation*}
  \sup_{x \in \Omega} \, \abs[0]{ f(x) - s_{f,X_N}(x) } \lesssim N^{-\beta/d+1/2} (\log N)^{1/2} \:\: \text{ and } \:\: \abs[0]{I(f) - Q_{X_N}(f)} \lesssim N^{-\beta/d} (\log N)^{1/2}.
\end{equation*}
\end{theorem}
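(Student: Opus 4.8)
The plan is to exploit the fact that a function in $S_-^\beta(\Omega)$, while possibly failing to lie in $W_2^\beta(\Omega)$ itself, belongs to $W_2^{\beta-\epsilon}(\Omega)$ for every $\epsilon \in (0, \beta - d/2)$, and to apply Theorem~\ref{thm:sobolev-rates} at smoothness $\beta - \epsilon$ while carefully tracking how both the Sobolev norm of $f$ and the constant $C_1$ (respectively $C_2$) depend on $\epsilon$. Optimising the free parameter $\epsilon$ against $N$ will then produce the logarithmic factor.

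First I would bound the growth of the norm. Fix an extension $f_0 \in S_-^\beta(\R^d)$ of $f$, so that $\abs[0]{\widehat{f_0}(\xi)}^2 \leq C (1 + \norm[0]{\xi}^2)^{-(\beta + d/2)}$ for some $C > 0$. Then for each admissible $\epsilon$,
\begin{equation*}
  \norm[0]{f}_{W_2^{\beta-\epsilon}(\Omega)}^2 \leq \norm[0]{f_0}_{W_2^{\beta-\epsilon}(\R^d)}^2 \leq C \int_{\R^d} \big(1 + \norm[0]{\xi}^2\big)^{-\epsilon - d/2} \dif \xi \lesssim \frac{1}{\epsilon},
\end{equation*}
where the final bound follows by passing to polar coordinates and noting that the radial integral $\int_1^\infty r^{-2\epsilon - 1} \dif r = (2\epsilon)^{-1}$ dominates. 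Hence $\norm[0]{f}_{W_2^{\beta-\epsilon}(\Omega)} \lesssim \epsilon^{-1/2}$ with an implied constant independent of $\epsilon$.

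Next I would invoke Theorem~\ref{thm:sobolev-rates} with $\beta$ replaced by $\beta - \epsilon$. Writing $C_1(\beta - \epsilon)$ for the resulting constant, this gives, for a quasi-uniform sequence,
\begin{equation*}
  \sup_{x \in \Omega} \abs[0]{ f(x) - s_{f,X_N}(x) } \lesssim C_1(\beta-\epsilon) \, N^{-(\beta-\epsilon)/d + 1/2} \, \epsilon^{-1/2} = C_1(\beta-\epsilon) \, N^{-\beta/d + 1/2} \, N^{\epsilon/d} \epsilon^{-1/2}.
\end{equation*}
Provided $C_1(\beta - \epsilon)$ stays bounded as $\epsilon \downarrow 0$, it remains to minimise $N^{\epsilon/d} \epsilon^{-1/2}$ over $\epsilon$. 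Setting the derivative of $(\epsilon/d) \log N - \tfrac{1}{2} \log \epsilon$ to zero yields the optimal choice $\epsilon = d/(2 \log N)$, at which $N^{\epsilon/d} = \e^{1/2}$ and $\epsilon^{-1/2} \asymp (\log N)^{1/2}$, so that $N^{\epsilon/d} \epsilon^{-1/2} \asymp (\log N)^{1/2}$. This delivers the stated approximation bound; the integration bound follows identically using the $V_X^{1/2}$ estimate of Theorem~\ref{thm:sobolev-rates}, the only change being the exponent $-\beta/d$ in place of $-\beta/d + 1/2$.

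The main obstacle is the boundedness of $C_1(\beta - \epsilon)$ (and $C_2(\beta - \epsilon)$) as $\epsilon \downarrow 0$: the optimisation is only legitimate if the constants in Theorem~\ref{thm:sobolev-rates} do not blow up as the smoothness parameter approaches $\beta$ from below. This is precisely why the transparent dependence of $C_1$ and $C_2$ on the Sobolev smoothness parameters, emphasised in the remark following Theorem~\ref{thm:sobolev-rates}, is needed here; since $\beta - \epsilon$ remains in a fixed compact subinterval of $(d/2, \alpha]$ for all small $\epsilon$, I expect these constants to be uniformly bounded, but confirming this requires inspecting the appendix proof rather than treating $C_1, C_2$ as black boxes. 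A minor technical point is that applying Theorem~\ref{thm:sobolev-rates} at smoothness $\beta - \epsilon$ requires $\floor{\beta - \epsilon} > d/2$; this holds for all sufficiently small $\epsilon$ whenever $\beta$ is non-integer, and in the remaining case can be accommodated by appealing to the more general error estimates referenced after Theorem~\ref{thm:sobolev-rates}.
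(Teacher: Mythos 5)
Your argument is essentially the paper's own proof: the paper likewise applies Theorem~\ref{thm:sobolev-rates} at smoothness $\gamma = \beta - 1/\log N$, bounds $\norm[0]{f_0}_{W_2^\gamma(\R^d)}^2 \lesssim (\beta-\gamma)^{-1}$ by a spherical coordinate transform, and relies on the uniform-in-$\gamma$ boundedness of the constants recorded in~\eqref{eq:well-behaved-constants} (which holds because $C_\gamma$ and $h_{0,\gamma}$ depend on $\gamma$ only through $\floor{\gamma}$ and a continuous factor). Your choice $\epsilon = d/(2\log N)$ versus the paper's $1/\log N$ is immaterial, and the caveat you raise about $\floor{\beta-\epsilon} > d/2$ is a genuine shared subtlety that the paper handles by restricting $\gamma$ to $[\ceil{d/2},\beta)$.
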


The implicit constants in the estimates of Theorem~\ref{thm:sobolev-rates-S} depend on the smallest $C > 0$ such that \sloppy{${\abs[0]{\widehat{f_0}(\xi)}^2 \leq C(1+\norm[0]{\xi}^2)^{-(\beta+d/2)}}$} for a Sobolev exntesion $f_0$ of $f$ and all sufficiently large $\xi \in \R^d$. Similar dependencies are implicit in the bounds of Propositions~\ref{thm:MLE-upper-S} and~\ref{thm:MLE-lower-S} and Theorems~\ref{cor:MLE-upper-lower} and~\ref{thm:coverage-sobolev}.

Due to~\eqref{eq:WCE} the error estimates of Theorem~\ref{thm:sobolev-rates} for $\beta = \alpha$ are also upper bounds on the conditional standard deviations.
It is possible to establish matching lower bounds, which leads to the following standard result, the proof of which is given in Appendix~\ref{sec:proofs}.

\begin{theorem} \label{thm:WCE-exact} Suppose that $\Omega \subset \R^d$ satisfies Assumption~\ref{ass:Omega}. If $K$ is a Sobolev kernel of order $\alpha > \floor{d/2}$ and the sequence $(X_N)_{N=1}^\infty \subset \Omega$ is quasi-uniform, then
  \begin{equation*}
     \sup_{x \in \Omega} P_{X_N}(x,x)^{1/2} \asymp N^{-\alpha/d + 1/2} \quad \text{ and } \quad V_{X_N}^{1/2} \asymp N^{-\alpha/d}.
  \end{equation*}
  Furthermore, $P_{X_N}(\b{x}, \b{x})^{1/2} \asymp N^{-\alpha/d + 1/2}$ for any $x \notin \bigcup_{N=1}^\infty X_N$.
\end{theorem}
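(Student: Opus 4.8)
The plan is to split the statement into matching upper and lower bounds for each of the three quantities and to handle the two directions by entirely different means. The upper bounds are essentially bookkeeping: the worst-case-error identities \eqref{eq:WCE} express $\sup_{x \in \Omega} P_{X_N}(x,x)^{1/2}$ and $V_{X_N}^{1/2}$ as suprema of the pointwise and integration errors over the unit ball of $\mathcal{H}_K(\Omega)$, and by the norm-equivalence \eqref{eq:norm-equivalence} this ball is contained in a fixed multiple of the unit ball of $W_2^\alpha(\Omega)$. Hence the quasi-uniform error rates of Theorem~\ref{thm:sobolev-rates} applied at the boundary case $\beta = \alpha$ transfer directly to $\sup_x P_{X_N}(x,x)^{1/2} \lesssim N^{-\alpha/d+1/2}$ and $V_{X_N}^{1/2} \lesssim N^{-\alpha/d}$.

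For the lower bounds I would use the dual characterisation, immediate from \eqref{eq:WCE}, that $P_X(x,x)^{1/2} = \sup\Set{ g(x) }{ \norm[0]{g}_{\mathcal{H}_K(\Omega)} \leq 1,\ g|_X = 0 }$ (and the analogue with $g(x)$ replaced by $I(g)$ for $V_X^{1/2}$), which holds because a function vanishing on $X$ has zero conditional mean. The problem thus reduces to constructing explicit test functions vanishing on $X_N$, with controlled Sobolev norm, that are large where needed. For the supremal power function, fix a Schwartz bump $\phi$ with $\phi(0) = 1$, choose $x^\ast_N$ realising (up to a constant) the fill distance $h_{X_N}$, and set $g(y) = \phi((y - x^\ast_N)/\delta)$ with $\delta \asymp h_{X_N} \asymp N^{-1/d}$ small enough that $\supp g$ misses $X_N$. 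A Fourier rescaling of $\widehat{g}(\xi) = \delta^d \mathrm{e}^{-\mathrm{i} x_N^\ast \cdot \xi} \widehat{\phi}(\delta \xi)$ gives $\norm[0]{g}_{W_2^\alpha(\R^d)} \asymp \delta^{d/2 - \alpha}$, so the normalised bump takes value $\asymp \delta^{\alpha - d/2} \asymp N^{-\alpha/d+1/2}$ at $x^\ast_N$, and norm-equivalence yields $\sup_x P_{X_N}(x,x)^{1/2} \gtrsim N^{-\alpha/d+1/2}$.

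For the integration lower bound a single bump is too weak, producing only $N^{-\alpha/d-1/2}$, so I would instead superpose one bump of radius $\asymp N^{-1/d}$ in each of the $\asymp N$ holes of the quasi-uniform point set, with matching signs so the integrals accumulate. Writing $g = \sum_k \psi_k$, one has $I(g) \asymp N \delta^d \asymp 1$, while the well-separated supports make the Sobolev norm essentially additive, $\norm[0]{g}_{W_2^\alpha(\R^d)}^2 \asymp N \delta^{d - 2\alpha} \asymp N^{2\alpha/d}$; normalising gives $V_{X_N}^{1/2} \gtrsim N^{-\alpha/d}$. The one genuinely technical estimate here is the near-additivity of the \emph{fractional} Sobolev norm over a family of separated rescaled profiles, which I would control by exploiting that the $\psi_k$ are translates and dilates of a single bump placed on a grid of spacing $\asymp \delta$, so that the nonlocal cross-terms in the seminorm decay summably.

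The pointwise lower bound $P_{X_N}(x,x)^{1/2} \gtrsim N^{-\alpha/d+1/2}$ for fixed $x \notin \bigcup_{N} X_N$ is where I expect the real difficulty. The single-bump construction centred at $x$ works provided its support avoids $X_N$, i.e.\ provided $\mathrm{dist}(x, X_N) \gtrsim N^{-1/d}$. The separation radius $q_{X_N} \asymp N^{-1/d}$ guarantees that at most one point of $X_N$ lies within $q_{X_N}$ of $x$, but this does not by itself keep that point from approaching $x$ faster than $N^{-1/d}$; and by monotonicity of the power function under adding points, $P_{X_N}(x,x)^{1/2} \leq P_{\{x_j\}}(x,x)^{1/2} \asymp \mathrm{dist}(x, x_j)^{\alpha - d/2}$, so the claimed rate genuinely \emph{forces} $\mathrm{dist}(x, X_N) \asymp N^{-1/d}$. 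The crux of the theorem, rather than any of the norm computations, is therefore to establish that for a fixed $x \notin \bigcup_N X_N$ the nearest data point stays at the quasi-uniform scale $\asymp N^{-1/d}$, after which the bump argument closes the bound; I would leverage quasi-uniformity together with $x \notin \bigcup_N X_N$ and the fill-distance/interior-cone structure of $\Omega$ to secure this.
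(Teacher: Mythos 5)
Your treatment of the two supremal quantities is correct and in fact more self-contained than the paper's. The paper obtains the upper bounds exactly as you do, from the worst-case identities \eqref{eq:WCE} and Theorem~\ref{thm:sobolev-rates} at $\beta=\alpha$, but for the matching lower bounds on $\sup_{x\in\Omega}P_{X_N}(x,x)^{1/2}$ and on $V_{X_N}^{1/2}$ it simply cites standard information-based-complexity results (Novak; Ritter; Novak--Wo\'{z}niakowski), whereas you construct the extremal functions explicitly: one rescaled bump in a fill-distance hole, and $\asymp N$ bumps for the integral. Your dual characterisation $P_X(x,x)^{1/2}=\sup\{\,g(x) : \norm[0]{g}_{\mathcal{H}_K(\Omega)}\leq 1,\ g|_X=0\,\}$ is valid by the orthogonal decomposition $\norm[0]{g}_{\mathcal{H}_K(\Omega)}^2=\norm[0]{s_{g,X}}_{\mathcal{H}_K(\Omega)}^2+\norm[0]{g-s_{g,X}}_{\mathcal{H}_K(\Omega)}^2$, and the scaling computations are right. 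The near-additivity of the fractional Sobolev norm over separated dilated translates is genuinely the only technical point in the integration bound; it is controllable as you indicate, or can be sidestepped entirely by citation, as the paper does. This part of your proposal is a legitimate alternative route.

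The genuine gap is in the pointwise lower bound, and it sits exactly where you located it---but you have not closed it, and the hypotheses you propose to lean on do not suffice. You correctly note that monotonicity of the power function gives $P_{X_N}(x,x)^{1/2}\lesssim \delta_{x,X_N}^{\,\min(\alpha-d/2,1)}$ with $\delta_{x,X_N}\coloneqq\min_{i}\norm[0]{x-x_i}$, so the claimed rate forces $\delta_{x,X_N}\gtrsim N^{-1/d}$; you then assert that quasi-uniformity together with $x\notin\bigcup_N X_N$ will ``secure'' this. It will not: separation only guarantees that at most one point of $X_N$ lies within $q_{X_N}$ of $x$, and nothing prevents that single point from sitting at distance $\varepsilon_N\ll N^{-1/d}$ from $x$. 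Concretely, take a uniform grid on $[0,1]$ and move the grid point nearest to $x=1/2$ to $1/2+\varepsilon_N$ with, say, $\varepsilon_N=4^{-N}$; each $X_N$ is still quasi-uniform and omits $x$, yet $P_{X_N}(x,x)^{1/2}\lesssim \varepsilon_N^{\min(\alpha-d/2,1)}$ decays far faster than $N^{-\alpha/d+1/2}$. So the step you defer is not a routine consequence of the stated assumptions; it is the crux of the ``furthermore'' claim and requires either additional structure on the sequence or an explicit argument. The paper's own proof devotes its final page to precisely this point, via a case analysis on whether $\delta_{x,X_N}\geq q_{X_N}$ and, in the remaining case, a triangle-inequality comparison $2q_{X_N}\leq\delta_{x,X_N}+\norm[0]{x-x'}$ linking $\delta_{x,X_N}$ to $\delta_{x,X_{N-1}}$. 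Supplying (and scrutinising) an argument of that kind is the part of the proof your proposal is missing; the bump construction itself is the easy half.
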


\subsection{Maximum Likelihood Estimation} \label{sec:MLE-Sobolev}

This section contains upper and lower bounds on $\sigma_\ML(f,X_N)$ when $K$ is a Sobolev kernel of order $\alpha$ and $f$ is not necessarily in $W_2^\alpha(\Omega)$.
If $f \in W_2^\alpha(\Omega)$, which below corresponds to either $\beta \geq \alpha$ or $\beta < \alpha$, then the results in Section~\ref{subsec: MLEs in RKHS} can be used instead.
The main result on maximum likelihood estimation is Theorem~\ref{cor:MLE-upper-lower} which provides sharp (up to logarithmic factors) asymptotics for the maximum likelihood estimate under certain conditions on $f$.
Propositions~\ref{thm:MLE-upper} to~\ref{thm:MLE-lower-S} contain individual upper and lower bounds.
The bounds are used to discuss credible sets and asymptotic overconfidence and underconfidence in Section~\ref{sec:coverage-sobolev}.
The proofs of this section are provided in Appendix~\ref{sec:proofs}.

\begin{proposition} \label{thm:MLE-upper} Let $\alpha \geq \beta$ and $\floor{\beta} > d/2$. Suppose that $\Omega \subset \R^d$ satisfies Assumption~\ref{ass:Omega} and $K$ is a Sobolev kernel of order $\alpha$. If $f \in W_2^\beta(\Omega)$, then there are $C, h_0 > 0$, which do not depend on $f$ or $X_N$, such that
  \begin{equation} \label{eq:MLE-upper-general}
    \sigma_\ML(f,X_N) \leq C N^{-1/2} q_{X_N}^{\beta-\alpha} \norm[0]{f}_{W_2^\beta(\Omega)}
  \end{equation}
  whenever $h_{X_N} \leq h_0$.
For a quasi-uniform sequence $(X_N)_{N=1}^\infty \subset \Omega$ this bound becomes
  \begin{equation*}
    \sigma_\ML(f,X_N) \lesssim N^{(\alpha-\beta)/d-1/2} \norm[0]{f}_{W_2^\beta(\Omega)}.
  \end{equation*}
\end{proposition}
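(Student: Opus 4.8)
The plan is to reduce the entire statement to a single stability estimate for the kernel interpolant. By the identity~\eqref{eq:MLE} we have $\sigma_\ML(f,X_N) = N^{-1/2}\norm[0]{s_{f,X_N}}_{\mathcal{H}_K(\Omega)}$, and since $K$ is a Sobolev kernel of order $\alpha$ the norm-equivalence~\eqref{eq:norm-equivalence} lets us replace $\norm[0]{s_{f,X_N}}_{\mathcal{H}_K(\Omega)}$ by $\norm[0]{s_{f,X_N}}_{W_2^\alpha(\Omega)}$ up to a constant independent of $f$ and $X_N$. Thus the whole proposition follows once I establish the interpolation-stability bound $\norm[0]{s_{f,X_N}}_{W_2^\alpha(\Omega)} \lesssim q_{X_N}^{\beta-\alpha}\norm[0]{f}_{W_2^\beta(\Omega)}$; the quasi-uniform rate is then immediate from $q_{X_N}\asymp N^{-1/d}$. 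The difficulty is that $f$ lives only in the rougher space $W_2^\beta(\Omega)$, so the minimum-norm property~\eqref{eq:s-minimal}, which would bound $\norm[0]{s_{f,X_N}}_{\mathcal{H}_K(\Omega)}$ by $\norm[0]{f}_{\mathcal{H}_K(\Omega)}$, is of no direct use because $\norm[0]{f}_{\mathcal{H}_K(\Omega)}=\infty$ when $\beta<\alpha$.

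To circumvent this I would still invoke~\eqref{eq:s-minimal}, but against a hand-built competitor: since $s_{f,X_N}$ has the smallest $\mathcal{H}_K(\Omega)$-norm among all functions interpolating $f$ on $X_N$, it suffices to exhibit one smooth interpolant $g\in W_2^\alpha(\Omega)$ with $g|_{X_N}=f|_{X_N}$ and $\norm[0]{g}_{W_2^\alpha(\Omega)}\lesssim q_{X_N}^{\beta-\alpha}\norm[0]{f}_{W_2^\beta(\Omega)}$. I would construct $g$ in two layers. First, extend $f$ to $f_0\in W_2^\beta(\R^d)$ with comparable norm (a Stein-type extension, available under Assumption~\ref{ass:Omega}) and mollify at scale $\delta\asymp q_{X_N}$ to obtain a smooth background $f_\delta$; standard Fourier estimates give $\norm[0]{f_\delta}_{W_2^\alpha(\R^d)}\lesssim\delta^{\beta-\alpha}\norm[0]{f_0}_{W_2^\beta(\R^d)}$ and $\norm[0]{f_0-f_\delta}_{L^2(\R^d)}\lesssim\delta^{\beta}\norm[0]{f_0}_{W_2^\beta(\R^d)}$. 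Second, correct the residual at the nodes: set $g=f_\delta+\sum_i r_i\phi_i$ with $r_i=f(x_i)-f_\delta(x_i)$ and $\phi_i(\cdot)=\phi((\cdot-x_i)/q_{X_N})$ a fixed smooth bump with $\phi(0)=1$ supported in the unit ball, so that the $\phi_i$ have pairwise disjoint supports (by the very definition of $q_{X_N}$) and $g$ interpolates $f$ exactly on $X_N$.

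The two layers are then estimated separately. The background contributes $\norm[0]{f_\delta}_{W_2^\alpha}\lesssim q_{X_N}^{\beta-\alpha}\norm[0]{f}_{W_2^\beta}$ directly. For the correction, the disjoint supports give $\norm[0]{\sum_i r_i\phi_i}_{W_2^\alpha}^2 \asymp q_{X_N}^{d-2\alpha}\sum_i r_i^2$ via the scaling $\norm[0]{\phi_i}_{W_2^\alpha}^2\asymp q_{X_N}^{d-2\alpha}$, while the node residuals are controlled by a scaled trace (sampling) inequality on the balls $B(x_i,q_{X_N})$, namely $\abs[0]{r_i}^2\lesssim q_{X_N}^{-d}\norm[0]{f_0-f_\delta}_{L^2(B(x_i,q_{X_N}))}^2+q_{X_N}^{2\beta-d}\abs[0]{f_0-f_\delta}_{W_2^\beta(B(x_i,q_{X_N}))}^2$, which is licensed by $\beta>d/2$ (guaranteed by $\floor{\beta}>d/2$). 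Summing over the disjoint balls and inserting the mollification bounds yields $\sum_i r_i^2\lesssim q_{X_N}^{2\beta-d}\norm[0]{f}_{W_2^\beta}^2$, hence $\norm[0]{\sum_i r_i\phi_i}_{W_2^\alpha}\lesssim q_{X_N}^{\beta-\alpha}\norm[0]{f}_{W_2^\beta}$. Adding the two layers gives the competitor bound, and~\eqref{eq:s-minimal} together with~\eqref{eq:MLE} completes the argument.

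I expect the main obstacle to be securing this competitor estimate with the \emph{sharp} exponent $q_{X_N}^{\beta-\alpha}$: a naive interpolant built purely from independent nodal bumps $\sum_i f(x_i)\phi_i$ loses a full factor of $q_{X_N}^{\beta}$, because it ignores the smoothness of $f$ by treating the samples as unrelated spikes. The decomposition into a mollified background plus a small residual correction is exactly what restores the missing power, since the residual $f_0-f_\delta$ is of size $\delta^\beta$ in $L^2$. A secondary technical point is verifying the mollification estimates for non-integer $\alpha$ and $\beta$, which I would handle on the Fourier side by choosing a mollifier with sufficiently many vanishing moments and controlling the multiplier $(1+\norm[0]{\xi}^2)^{\alpha-\beta}\abs[0]{\widehat{\eta}(\delta\xi)}^2$ separately on $\norm[0]{\xi}\le\delta^{-1}$ and $\norm[0]{\xi}>\delta^{-1}$.
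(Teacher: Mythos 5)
Your reduction is exactly the one the paper uses: by~\eqref{eq:MLE}, the minimum-norm property~\eqref{eq:s-minimal}, and the norm-equivalence~\eqref{eq:norm-equivalence}, everything hinges on exhibiting a single interpolant $g \in W_2^\alpha(\Omega)$ of $f$ on $X_N$ with $\norm[0]{g}_{W_2^\alpha(\Omega)} \lesssim q_{X_N}^{\beta-\alpha} \norm[0]{f}_{W_2^\beta(\Omega)}$. Where you genuinely diverge is in how that competitor is built. The paper's Lemma~\ref{lemma:bernstein} obtains it by citing the band-limited interpolation theorem of \citet[Theorem~3.4]{Narcowich2006}---a band-limited $f_\beta$ with band-width $\asymp q_{X_N}^{-1}$ that interpolates $f$ on $X_N$ and satisfies $\norm[0]{f_\beta}_{W_2^\beta} \lesssim \norm[0]{f}_{W_2^\beta}$---and then upgrades to the $W_2^\alpha$ norm by a one-line Bernstein inequality on the Fourier side. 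You instead construct the competitor by hand as a mollified background at scale $q_{X_N}$ plus disjointly supported nodal bump corrections, with the nodal residuals controlled by a scaled trace inequality; this is self-contained and makes transparent where the sharp exponent comes from (the residual $f_0 - f_\delta$ being of size $\delta^\beta$ in $L^2$ is precisely what recovers the power a naive nodal-bump interpolant loses). The trade-offs: the paper's route is shorter and the same lemma also delivers $\norm[0]{f - f_\beta}_{W_2^\beta(\Omega)} \leq 5 \norm[0]{f}_{W_2^\beta(\Omega)}$, which is reused in the proof of Theorem~\ref{thm:sobolev-rates}; your route avoids the external citation but has one step you gloss over: for fractional $\alpha$ the bound $\norm[0]{\sum_i r_i \phi_i}_{W_2^\alpha(\R^d)}^2 \lesssim q_{X_N}^{d-2\alpha} \sum_i r_i^2$ does not follow from disjointness of the supports alone, since the cross terms in the fractional seminorm do not vanish and a crude triangle inequality loses a factor of $N$. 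You need, for instance, a packing/colouring argument splitting the bumps into a dimension-dependent number of well-separated families (for which almost-orthogonality does hold), or to work with integer-order derivatives and interpolate. With that detail patched, your argument is correct and yields the stated bound, including the quasi-uniform specialisation via $q_{X_N} \asymp N^{-1/d}$.
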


Proposition~\ref{thm:MLE-upper} holds in a slightly modified form if $f \in S_-^\beta(\Omega)$ (recall $S_-^\beta(\Omega) \cap W_2^\beta(\Omega) = \emptyset)$.

\begin{proposition} \label{thm:MLE-upper-S} 
Suppose that the other assumptions of Theorem~\ref{thm:MLE-upper} are satisfied but \sloppy{${f \in S_-^\beta(\R^d)}$}. Then for a quasi-uniform sequence $(X_N)_{N=1}^\infty \subset \Omega$,
  \begin{equation*}
    \sigma_\ML(f,X_N) \lesssim N^{(\alpha-\beta)/d-1/2} (\log N)^{1/2}.
  \end{equation*}
\end{proposition}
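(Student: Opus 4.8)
The plan is to deduce the $S_-^\beta$ bound from the already-established $W_2^\beta$ bound of Proposition~\ref{thm:MLE-upper} by sacrificing an arbitrarily small amount of Sobolev smoothness and then optimising the resulting trade-off. The key observation is that a function in $S_-^\beta(\R^d)$ just fails to lie in $W_2^\beta(\R^d)$, yet it does belong to $W_2^{\beta'}(\R^d)$ for every $\beta' < \beta$, with a norm that diverges in a controlled way as $\beta' \uparrow \beta$. Since Proposition~\ref{thm:MLE-upper} already packages the inverse-inequality machinery needed to pass from the interpolant to the maximum likelihood estimate via~\eqref{eq:MLE}, I would reuse it as a black box applied at a reduced smoothness index.

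First I would make the norm blow-up quantitative. Let $f_0$ be a Sobolev extension of $f$ with $\abs[0]{\widehat{f_0}(\xi)}^2 \lesssim (1 + \norm[0]{\xi}^2)^{-(\beta + d/2)}$. Then for any $\beta' < \beta$ we have $\norm[0]{f_0}_{W_2^{\beta'}(\R^d)}^2 \lesssim \int_{\R^d}(1+\norm[0]{\xi}^2)^{\beta'-\beta-d/2}\dif \xi \lesssim (\beta-\beta')^{-1}$, the integral being finite precisely because $\beta' < \beta$ (cf.\ the computation following~\eqref{eq:S-definition}) and the bound remaining uniform as $\beta' \uparrow \beta$, since the divergence comes only from the tail $\norm[0]{\xi} > 1$. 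By the restriction definition of the norm this gives $\norm[0]{f}_{W_2^{\beta'}(\Omega)} \lesssim (\beta-\beta')^{-1/2}$.

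Next I would apply the quasi-uniform form of Proposition~\ref{thm:MLE-upper} with $\beta$ replaced by such a $\beta'$, obtaining $\sigma_\ML(f,X_N) \lesssim N^{(\alpha-\beta')/d-1/2}\norm[0]{f}_{W_2^{\beta'}(\Omega)} \lesssim N^{(\alpha-\beta)/d-1/2}\, N^{(\beta-\beta')/d}(\beta-\beta')^{-1/2}$. Writing $\varepsilon \defeq \beta-\beta' > 0$, it then remains to minimise $N^{\varepsilon/d}\varepsilon^{-1/2} = \exp(\varepsilon \log N / d)\,\varepsilon^{-1/2}$ over $\varepsilon$; differentiating the logarithm shows the optimum is $\varepsilon_N \asymp (\log N)^{-1}$, for which $N^{\varepsilon_N/d} \asymp 1$ while $\varepsilon_N^{-1/2} \asymp (\log N)^{1/2}$, yielding exactly the claimed rate $N^{(\alpha-\beta)/d-1/2}(\log N)^{1/2}$.

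The main obstacle is not this elementary optimisation but the \emph{uniformity} of all constants as $\beta' \uparrow \beta$. Proposition~\ref{thm:MLE-upper} rests on Theorem~\ref{thm:sobolev-rates}, whose constants and fill-distance threshold $h_0$ depend on the smoothness index; were the multiplicative constant in~\eqref{eq:MLE-upper-general} permitted to blow up (for instance exponentially) as $\beta' \to \beta$, the logarithmic factor would be lost. This is precisely why the transparent, constant-tracking version of Theorem~\ref{thm:sobolev-rates} is invoked rather than its sharper but opaque generalisations. Concretely I would verify that the constant in~\eqref{eq:MLE-upper-general} and the threshold $h_0$ can be chosen uniformly for $\beta'$ in a fixed interval $[\beta_0,\beta]$ with $\floor{\beta_0} > d/2$, so that for the admissible choice $\beta' = \beta - \varepsilon_N$ the hypothesis $h_{X_N} \le h_0$ still holds for all large $N$ (as $h_{X_N} \asymp N^{-1/d}\to 0$) and the only source of growth is the factor $(\beta-\beta')^{-1/2}$ identified above.
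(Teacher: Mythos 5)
Your proposal is correct and is essentially identical to the paper's own proof: the paper likewise applies Proposition~\ref{thm:MLE-upper} at the reduced smoothness $\gamma_N = \beta - 1/\log N$, bounds $\norm[0]{f_0}_{W_2^{\gamma_N}(\R^d)}^2 \lesssim (\beta-\gamma_N)^{-1} = \log N$ via the same spherical-coordinate tail computation, and absorbs $N^{(\beta-\gamma_N)/d} = \e^{1/d}$ into the constant. The uniformity issue you flag is exactly the one the paper handles, by noting that the constants $C_\gamma$ and thresholds $h_{0,\gamma}$ depend on $\gamma$ only through $\floor{\gamma}$ (and continuously otherwise), so they are uniformly controlled for $\gamma \in [\ceil{d/2},\beta)$.
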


Lower bounds require some additional assumptions and take a more cumbersome form.
Recall that the support of a function is the closed set $\supp(f) \coloneqq \overline{\Set{ x \in \Omega }{ f(x) \neq 0}}$ and the interior $\inte (\Omega)$ of $\Omega \subset \R^d$ is the largest open set contained in $\Omega$.

\begin{proposition} \label{thm:MLE-lower} Let $\alpha \geq \beta > \gamma$ and $\floor{\gamma} > d/2$. Suppose that $\Omega \subset \R^d$ satisfies Assumption~\ref{ass:Omega} and $K$ is a Sobolev kernel of order $\alpha$. If $\supp(f) \subset \inte(\Omega)$ and $f$ has an extension $f_0 \in W_2^\gamma(\R^d) \cap S_+^\beta(\R^d)$ such that $\supp(f_0) \subset \inte(\Omega)$, then there are $C, h_0 > 0$, which do not depend on $X_N$, such that
  \begin{equation} \label{eq:MLE-lower-general}
    \sigma_\ML(f,X_N) \geq C N^{-1/2} h_{X_N}^{\gamma(1-\alpha/\beta)} \rho_{X_N}^{-(\alpha-\gamma)(\alpha-\beta)/\beta} \norm[0]{f}_{W^\gamma_2(\Omega)}^{1-\alpha/\beta}
  \end{equation}
  whenever $h_{X_N} \leq h_0$.
For a quasi-uniform sequence $(X_N)_{N=1}^\infty \subset \Omega$ this bound becomes
  \begin{equation} \label{eq:MLE-lower-uniform}
    \sigma_\ML(f,X_N) \gtrsim N^{\gamma(\alpha/\beta-1)/d - 1/2}\norm[0]{f}_{W^{\gamma}_{2}(\Omega)}^{1-\alpha/\beta}.
  \end{equation}
\end{proposition}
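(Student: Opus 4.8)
The plan is to lower-bound the RKHS norm of the conditional mean and then invoke the identity~\eqref{eq:MLE}. By~\eqref{eq:MLE} together with the norm-equivalence~\eqref{eq:norm-equivalence} of a Sobolev kernel of order $\alpha$, we have $\sigma_\ML(f,X_N) = N^{-1/2}\norm[0]{s_{f,X_N}}_{\mathcal{H}_K(\Omega)} \asymp N^{-1/2}\norm[0]{s_{f,X_N}}_{W_2^\alpha(\Omega)}$, so it suffices to show that the roughness of $f$ forces $\norm[0]{s_{f,X_N}}_{W_2^\alpha(\Omega)}$ to grow; write $s \defeq s_{f,X_N}$. The governing heuristic is a frequency split: membership $f_0 \in S_+^\beta(\R^d)$ provides a lower bound on the high-frequency $L^2$-mass of $f$, whereas a function with small $W_2^\alpha$-norm can carry only little mass at high frequencies. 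Since $s$ matches $f$ on $X_N$, and hence approximates it in $L^2$, the function $s$ must account for most of that high-frequency mass, which is only possible if $\norm[0]{s}_{W_2^\alpha(\Omega)}$ is large.

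To make this quantitative I would first record the sharp $L^2$ error estimate. Because $f \in W_2^\gamma(\Omega)$ with $\floor{\gamma} > d/2$, the scattered-data estimates underlying Theorem~\ref{thm:sobolev-rates}, in their $L^2$ form due to \citet{Narcowich2006} together with stability of the interpolant in $W_2^\gamma(\Omega)$, give $\norm[0]{f - s}_{L^2(\Omega)} \lesssim h_{X_N}^\gamma \rho_{X_N}^{\alpha-\gamma} \norm[0]{f}_{W_2^\gamma(\Omega)}$ for $h_{X_N}$ small. It is essential to use the $L^2$ rate $h^\gamma$ here rather than the weaker $L^\infty$ rate $h^{\gamma-d/2}$ of Theorem~\ref{thm:sobolev-rates}, since only the former yields the exponent claimed in~\eqref{eq:MLE-lower-general}.

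The support hypotheses are used to transfer this estimate into a global frequency-domain statement. Since $\supp(f_0) \subset \inte(\Omega)$ is compact, I fix a cutoff $\chi \in C_c^\infty(\inte(\Omega))$ with $\chi \equiv 1$ on $\supp(f_0)$ and set $g \defeq f_0 - (\chi s)_0$, where $(\chi s)_0$ is the extension of $\chi s$ by zero to $\R^d$. Because $f = \chi f$ on $\Omega$, a direct computation gives $g = \chi(f - s)$, supported in $\inte(\Omega)$, so $\norm[0]{g}_{L^2(\R^d)} \lesssim \norm[0]{f-s}_{L^2(\Omega)}$; meanwhile multiplication by $\chi$ followed by extension by zero is bounded on $W_2^\alpha$ (as $\chi$ vanishes near $\partial\Omega$), whence $\norm[0]{(\chi s)_0}_{W_2^\alpha(\R^d)} \lesssim \norm[0]{s}_{W_2^\alpha(\Omega)}$. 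Splitting $\widehat{f_0} = \widehat{g} + \widehat{(\chi s)_0}$ and integrating over $\{\norm[0]{\xi} \geq R\}$, the bound $\abs[0]{\widehat{f_0}(\xi)}^2 \gtrsim (1+\norm[0]{\xi}^2)^{-(\beta+d/2)}$ from $S_+^\beta$ yields $\int_{\norm[0]{\xi}\geq R} \abs[0]{\widehat{f_0}}^2 \gtrsim R^{-2\beta}$, while the high-frequency mass of $(\chi s)_0$ is at most $R^{-2\alpha}\norm[0]{(\chi s)_0}_{W_2^\alpha(\R^d)}^2$. The triangle inequality in $L^2(\{\norm[0]{\xi}\geq R\})$ then delivers $R^{-\beta} \lesssim h_{X_N}^\gamma \rho_{X_N}^{\alpha-\gamma}\norm[0]{f}_{W_2^\gamma(\Omega)} + R^{-\alpha}\norm[0]{s}_{W_2^\alpha(\Omega)}$.

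Finally I would optimise over the cutoff frequency $R$: choosing $R \asymp (h_{X_N}^\gamma \rho_{X_N}^{\alpha-\gamma}\norm[0]{f}_{W_2^\gamma(\Omega)})^{-1/\beta}$, with a constant making the first term at most half of $R^{-\beta}$, absorbs the error term and leaves $\norm[0]{s}_{W_2^\alpha(\Omega)} \gtrsim R^{\alpha-\beta}$. Substituting the value of $R$, simplifying the exponents via $\gamma(1-\alpha/\beta) = -\gamma(\alpha-\beta)/\beta$ and $1-\alpha/\beta = -(\alpha-\beta)/\beta$, and multiplying by $N^{-1/2}$ produces~\eqref{eq:MLE-lower-general} (the implicit constant depending on the roughness constant of $f_0$); the quasi-uniform bound~\eqref{eq:MLE-lower-uniform} then follows from $h_{X_N} \asymp q_{X_N} \asymp N^{-1/d}$ and $\rho_{X_N} \lesssim 1$. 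The main obstacle is the third step: extracting an honest global frequency-domain inequality from a domain-based interpolation bound, which forces both the cutoff construction exploiting $\supp(f_0) \subset \inte(\Omega)$ and the use of the sharp $L^2$ rate. The remaining balancing of frequencies is routine once these ingredients are in place.
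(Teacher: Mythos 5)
Your proposal is correct and follows the same overall strategy as the paper: start from the sharp $L^2(\Omega)$ interpolation error $\varepsilon_{X_N} \asymp h_{X_N}^\gamma \rho_{X_N}^{\alpha-\gamma}\norm[0]{f}_{W_2^\gamma(\Omega)}$ (you are right that the $L^\infty$ rate of Theorem~\ref{thm:sobolev-rates} would lose a factor $h^{-d/2}$ and ruin the exponent), localise using a smooth cutoff supported in $\inte(\Omega)$ that equals one on $\supp(f_0)$, extract a high-frequency lower bound $\gtrsim R^{-\beta}$ from $f_0 \in S_+^\beta(\R^d)$, bound the high-frequency mass of the localised interpolant by $R^{-\alpha}$ times its $W_2^\alpha$ norm, and balance at $R \asymp \varepsilon_{X_N}^{-1/\beta}$ to get $\norm[0]{s_{f,X_N}}_{W_2^\alpha(\Omega)} \gtrsim \varepsilon_{X_N}^{1-\alpha/\beta}$, which combined with~\eqref{eq:MLE} and norm-equivalence gives the claim. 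The one place you diverge is the localisation step. The paper (following van der Vaart and van Zanten) works in Fourier space with the convolution $\widehat{s}_{f,X,0} * \widehat{\phi}$ and invokes their Lemma~16 to show that convolving with the rapidly decaying $\widehat{\phi}$ cannot create high-frequency mass out of low frequencies, which requires the explicit sub-exponential tail $\abs[0]{\widehat{\phi}(\xi)\e^{\norm[0]{\xi}^u}} \to 0$ and the two-scale cutoffs $\mathbbm{1}_R^{\mathsf{c}}$ versus $\mathbbm{1}_{2R}^{\mathsf{c}}$. You instead bound $\int_{\norm[0]{\xi} \geq R} \abs[0]{\widehat{(\chi s)_0}}^2 \leq R^{-2\alpha}\norm[0]{(\chi s)_0}_{W_2^\alpha(\R^d)}^2$ directly and appeal to the boundedness of multiplication by $\chi \in C_c^\infty$ on $W_2^\alpha(\R^d)$. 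That multiplier bound is standard (Peetre's inequality plus Young, using the rapid decay of $\widehat{\chi}$ — morally the same estimate Lemma~16 encodes, but packaged as a black box), so your route is legitimate and somewhat cleaner: it needs only one frequency cutoff and no explicit convolution bookkeeping, at the price of citing the multiplier theorem for fractional $\alpha$ rather than proving the needed special case by hand. The exponent bookkeeping at the end is correct and reproduces~\eqref{eq:MLE-lower-general} and~\eqref{eq:MLE-lower-uniform} exactly.
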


\begin{proposition} \label{thm:MLE-lower-S} 
Suppose that the other assumptions of Theorem~\ref{thm:MLE-lower} are satisfied but \sloppy{${f_0 \in S^\beta(\R^d)}$}. 
Then for a quasi-uniform sequence $(X_N)_{N=1}^\infty \subset \Omega$,
  \begin{equation*}
    \sigma_\ML(f,X_N) \gtrsim N^{(\alpha-\beta)/d-1/2} (\log N)^{(1-\alpha/\beta)/2} .
  \end{equation*}
\end{proposition}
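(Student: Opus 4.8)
The plan is to derive this sharper lower bound from Proposition~\ref{thm:MLE-lower} by invoking it not with a single fixed $\gamma$ but with a sequence $\gamma_N \uparrow \beta$ that is allowed to depend on $N$. The point is that $S^\beta(\R^d) = S_-^\beta(\R^d) \cap S_+^\beta(\R^d)$: the lower Fourier bound ($S_+^\beta$) is exactly what powers Proposition~\ref{thm:MLE-lower}, while the additional upper Fourier bound ($S_-^\beta$) guarantees $f_0 \in W_2^{\gamma}(\R^d)$ for \emph{every} $\gamma < \beta$ and lets us quantify how fast $\norm[0]{f_0}_{W_2^\gamma(\R^d)}$ blows up as $\gamma \uparrow \beta$. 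Since $f_0 \in S^\beta(\R^d)$ and the ambient $\gamma$ from Theorem~\ref{thm:MLE-lower} satisfies $\floor{\gamma} > d/2$ and $\gamma < \beta$, for each large $N$ we may apply Proposition~\ref{thm:MLE-lower} with $\gamma$ replaced by any $\gamma_N \in (\gamma, \beta)$; the hypotheses $f_0 \in W_2^{\gamma_N}(\R^d) \cap S_+^\beta(\R^d)$, $\supp(f_0) \subset \inte(\Omega)$, and $\floor{\gamma_N} \geq \floor{\gamma} > d/2$ all remain in force.

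The key quantitative input is the growth of the Sobolev norm of $f_0$ near the critical smoothness. Writing $\epsilon \defeq \beta - \gamma_N$ and using the two-sided bound $\abs[0]{\widehat{f_0}(\xi)}^2 \asymp (1+\norm[0]{\xi}^2)^{-(\beta+d/2)}$ available for $f_0 \in S^\beta(\R^d)$, a computation in polar coordinates gives
\begin{equation*}
  \norm[0]{f_0}_{W_2^{\gamma_N}(\R^d)}^2 = \int_{\R^d} \big(1+\norm[0]{\xi}^2\big)^{\gamma_N} \abs[0]{\widehat{f_0}(\xi)}^2 \dif \xi \asymp \int_{\R^d} \big(1+\norm[0]{\xi}^2\big)^{-(d/2+\epsilon)} \dif \xi \asymp \frac{1}{\epsilon},
\end{equation*}
the divergence as $\epsilon \to 0$ reflecting that $f_0 \notin W_2^\beta(\R^d)$. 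By the definition of the restriction norm as an infimum over extensions, $\norm[0]{f}_{W_2^{\gamma_N}(\Omega)} \leq \norm[0]{f_0}_{W_2^{\gamma_N}(\R^d)} \asymp \epsilon^{-1/2}$, and because the exponent $1-\alpha/\beta$ is \emph{negative} this upper bound on the norm produces a valid \emph{lower} bound when inserted into~\eqref{eq:MLE-lower-uniform}. Collecting the $\epsilon$-dependent factors yields, with $c \defeq (\alpha-\beta)/(\beta d) > 0$,
\begin{equation*}
  \sigma_\ML(f,X_N) \gtrsim N^{(\alpha-\beta)/d - 1/2} \, N^{-\epsilon c} \, \epsilon^{\,cd/2}.
\end{equation*}

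It remains to optimise over the free parameter, i.e.\ over $\epsilon$. The product $N^{-\epsilon c}\epsilon^{cd/2} = \exp(-\epsilon c \log N)\,\epsilon^{cd/2}$ is maximised at $\epsilon_N = d/(2\log N)$, which is the choice $\gamma_N = \beta - d/(2\log N)$; this lies in $(\gamma,\beta)$ for all sufficiently large $N$. For this $\epsilon_N$ the exponential factor $N^{-\epsilon_N c} = \exp(-cd/2)$ is a positive constant, while $\epsilon_N^{cd/2} \asymp (\log N)^{-cd/2} = (\log N)^{(1-\alpha/\beta)/2}$, since $cd/2 = (\alpha-\beta)/(2\beta) = -(1-\alpha/\beta)/2$. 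This produces exactly the claimed bound $\sigma_\ML(f,X_N) \gtrsim N^{(\alpha-\beta)/d-1/2}(\log N)^{(1-\alpha/\beta)/2}$.

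The main obstacle is \emph{uniformity} of the implicit constant in Proposition~\ref{thm:MLE-lower} as $\gamma_N \uparrow \beta$: since that proposition is invoked with $\gamma$ varying with $N$, the optimisation above is only legitimate if the $\gtrsim$-constant in~\eqref{eq:MLE-lower-uniform} stays bounded away from zero, because a constant decaying polynomially in $\epsilon$ would degrade the final power of $\log N$ (whereas constants that \emph{grow} as $\gamma_N \to \beta$ are harmless, as they only strengthen a lower bound). I would therefore revisit the proof of Proposition~\ref{thm:MLE-lower} in Appendix~\ref{sec:proofs} and track the $\gamma$-dependence of each constant entering it---chiefly those from the Sobolev interpolation inequality relating the $W_2^{\gamma_N}$, $W_2^\beta$, and $W_2^\alpha$ norms, and from the sampling and stability estimates---verifying that each is uniform for $\gamma_N$ in a fixed compact neighbourhood of $\beta$. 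This uniformity check is the only nontrivial step; everything else is the elementary Fourier computation and the one-parameter optimisation above.
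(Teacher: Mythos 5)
Your proposal is correct and follows essentially the same route as the paper: apply Proposition~\ref{thm:MLE-lower} with an $N$-dependent smoothness $\gamma_N = \beta - O(1/\log N)$, use the $S_-^\beta$ Fourier bound to control the blow-up $\norm[0]{f_0}_{W_2^{\gamma_N}(\R^d)} \lesssim (\beta-\gamma_N)^{-1/2}$, and exploit the negative exponent $1-\alpha/\beta$ to convert this into the logarithmic gain (the paper simply takes $\gamma_N = \beta - 1/\log N$ rather than optimising, which yields the same rate). The uniformity of constants that you rightly flag as the only nontrivial step is indeed verified in the paper's proof of Proposition~\ref{thm:MLE-lower}, where the final constant $C_4$ is shown not to depend on $\gamma$ and $C_\gamma$, $h_{0,\gamma}$ are shown to be bounded for $\gamma$ in a bounded interval.
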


By combining Propositions~\ref{thm:MLE-upper-S} and~\ref{thm:MLE-lower-S} we obtain a rate for $f \in S^\beta(\Omega)$ that is sharp up to logarithmic factors. 
The empirical results in Section~\ref{sec:examples-mle} suggest that elimination of the logarithmic factors and the support conditions may be possible with more careful analysis.

\begin{theorem}[Asymptotics of the MLE] \label{cor:MLE-upper-lower} Let $\alpha \geq \beta$ and $\floor{\beta} > d/2$. Suppose that $\Omega \subset \R^d$ satisfies Assumption~\ref{ass:Omega} and $K$ is a Sobolev kernel of order $\alpha$.
  If $\supp(f) \subset \inte(\Omega)$ and $f$ has an extension $f_0 \in S^\beta(\R^d)$ such that $\supp(f_0) \subset \inte(\Omega)$, then for a quasi-uniform sequence $(X_N)_{N=1}^\infty \subset \Omega$,
  \begin{equation*}
    N^{(\alpha-\beta)/d-1/2} (\log N )^{(1-\alpha/\beta)/2} \lesssim \sigma_\ML(f,X_N) \lesssim N^{(\alpha-\beta)/d-1/2} (\log N)^{1/2}.
  \end{equation*}
\end{theorem}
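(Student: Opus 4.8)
The plan is to obtain the theorem as an immediate corollary of Propositions~\ref{thm:MLE-upper-S} and~\ref{thm:MLE-lower-S}, exploiting the decomposition $S^\beta(\R^d) = S_-^\beta(\R^d) \cap S_+^\beta(\R^d)$ in~\eqref{eq:S-definition}. The analytic substance has already been discharged in the proofs of those two propositions (via the scattered-data bounds of Theorems~\ref{thm:sobolev-rates} and~\ref{thm:sobolev-rates-S} and the identity~\eqref{eq:MLE}); the remaining task is purely to check that the hypotheses of the present theorem feed correctly into each one-sided bound.

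First I would establish the upper bound. Since $f_0 \in S^\beta(\R^d)$ we have in particular $f_0 \in S_-^\beta(\R^d)$, so $f$ satisfies the $S_-^\beta$ hypothesis of Proposition~\ref{thm:MLE-upper-S}. The remaining requirements of that proposition --- namely $\alpha \geq \beta$, $\floor{\beta} > d/2$, Assumption~\ref{ass:Omega} on $\Omega$, $K$ a Sobolev kernel of order $\alpha$, and quasi-uniformity of $(X_N)_{N=1}^\infty$ --- are exactly those assumed here. Hence Proposition~\ref{thm:MLE-upper-S} yields $\sigma_\ML(f,X_N) \lesssim N^{(\alpha-\beta)/d-1/2}(\log N)^{1/2}$. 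For the lower bound I would invoke Proposition~\ref{thm:MLE-lower-S}, whose hypotheses are those of Proposition~\ref{thm:MLE-lower} with the strengthening $f_0 \in S^\beta(\R^d)$. Because $f_0 \in S^\beta(\R^d) \subset S_+^\beta(\R^d)$ and, by the integrability observation following~\eqref{eq:S-definition}, $S_-^\beta(\R^d) \subset W_2^\gamma(\R^d)$ for every $\gamma < \beta$, the extension $f_0$ lies in $W_2^\gamma(\R^d) \cap S_+^\beta(\R^d)$ for any admissible intermediate smoothness $\gamma$. The support conditions $\supp(f) \subset \inte(\Omega)$ and $\supp(f_0) \subset \inte(\Omega)$ are assumed in the theorem, so once a valid $\gamma$ is fixed Proposition~\ref{thm:MLE-lower-S} gives $\sigma_\ML(f,X_N) \gtrsim N^{(\alpha-\beta)/d-1/2}(\log N)^{(1-\alpha/\beta)/2}$. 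Combining the two one-sided estimates produces the claimed two-sided bound.

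The only genuine obstacle I anticipate is selecting the intermediate smoothness $\gamma$ demanded by Proposition~\ref{thm:MLE-lower} (and hence by Proposition~\ref{thm:MLE-lower-S}), which must satisfy $\beta > \gamma$ together with $\floor{\gamma} > d/2$. This is a purely arithmetic nonemptiness check on the interval of admissible parameters under the hypothesis $\floor{\beta} > d/2$ --- taking, for instance, $\gamma$ slightly below $\beta$ when $\beta \notin \N$ --- and it does not influence the stated rate, since the exponents appearing in Proposition~\ref{thm:MLE-lower-S} are independent of $\gamma$. Everything else is bookkeeping: the theorem inherits all its content from the two constituent propositions, and no new estimate needs to be derived.
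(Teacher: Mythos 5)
Your proposal is correct and matches the paper's own route exactly: the theorem is obtained there precisely by combining Proposition~\ref{thm:MLE-upper-S} (applicable since $S^\beta(\R^d) \subset S_-^\beta(\R^d)$) with Proposition~\ref{thm:MLE-lower-S}, with no further estimate derived. Your additional remark on choosing the intermediate smoothness $\gamma$ is a reasonable piece of diligence that the paper leaves implicit (its proofs of the $S$-versions take $\gamma_N = \beta - 1/\log N$), and it does not change the argument.
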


In particular, for $\beta = \alpha - d/2$ we have $(\alpha-\beta)/d - 1/2 = 0$ so that the maximum likelihood estimates are asymptotically constant, up to logarithmic factors:
\begin{equation} \label{eq:sigma-ML-samples}
  (\log N)^{-d/(4\alpha - 2d)} \lesssim \sigma_\ML(f, X_N) \lesssim (\log N)^{1/2}
\end{equation}
if $f \in S^{\alpha-d/2}(\Omega)$.
As discussed in Section~\ref{sec:sample-paths}, this corresponds to the case where $f$ has essentially the same regularity as samples from a GP whose covariance kernel is a Sobolev kernel of order $\alpha$.

\subsection{Credible Sets} \label{sec:coverage-sobolev}

We now use the bounds on the maximum likelihood estimates to prove an overconfidence result similar to Theorem~\ref{thm:coverage-RKHS}, but this time for functions outside the RKHS.
First, it is instructive to study what can happen if the scale parameter is held fixed. If $K$ is a Sobolev kernel of order $\alpha$, $f \in W_2^\beta(\Omega)$ for $\beta \leq \alpha$, and $(X_N)_{N=1}^\infty$ are quasi-uniform, then Theorems~\ref{thm:sobolev-rates} and~\ref{thm:WCE-exact} yield
\begin{equation} \label{eq:overconfidence-fixed-sigma}
  \frac{\abs[0]{I(f) - Q_{X_N}(f)}}{R_\BC(f, X_N)} = \frac{\abs[0]{I(f) - Q_{X_N}(f)}}{\sigma V_{X_N}^{1/2}} \lesssim \frac{N^{-\beta/d} \norm[0]{f}_{W_2^\beta(\Omega)}}{\sigma N^{-\alpha/d}} = \sigma^{-1} N^{(\alpha-\beta)/d} \norm[0]{f}_{W_2^\beta(\Omega)}.
\end{equation}
That is, there is potential for significant overconfidence if $K$ is smoother than $f$.
The following theorem shows that maximum likelihood estimation provides protection against such model misspecification.

\begin{theorem}[Slow overconfidence at worst outside the RKHS] \label{thm:coverage-sobolev} Let $\alpha \geq \beta$ and $\floor{\beta} > d/2$. Suppose that $\Omega \subset \R^d$ satisfies Assumption~\ref{ass:Omega} and $K$ is a Sobolev kernel of order $\alpha$.
  If $\supp(f) \subset \inte (\Omega)$ and $f$ has an extension $f_0 \in S^\beta(\R^d)$ such that $\supp(f_0) \subset \inte (\Omega)$, then for a quasi-uniform sequence $(X_N)_{N=1}^\infty \subset \Omega$,
  \begin{equation*}
    \frac{\abs[0]{ f(x) - s_{f,X_N}(x)}}{ R_\GP(x,f,X_N)} \lesssim N^{1/2} (\log N)^{\alpha/(2\beta)} \quad \text{ for any } \quad x \in \Omega
  \end{equation*}
  and
  \begin{equation*}
    \frac{\abs[0]{ I(f) - Q_{X_N}(f)}}{ R_\BC(f,X_N)} \lesssim N^{1/2} (\log N)^{\alpha/(2\beta)}.
  \end{equation*}
\end{theorem}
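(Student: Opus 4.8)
The plan is to bound each standard score by controlling the genuine error (the numerator) from above and the scaled posterior width (the denominator) from below, with all three required ingredients already in hand from the preceding results. The hypothesis $f_0 \in S^\beta(\R^d) = S_-^\beta(\R^d) \cap S_+^\beta(\R^d)$ gives in particular $f \in S_-^\beta(\Omega)$, so Theorem~\ref{thm:sobolev-rates-S} applies and supplies the numerator bounds for a quasi-uniform sequence: $\sup_{x \in \Omega} \abs[0]{f(x) - s_{f,X_N}(x)} \lesssim N^{-\beta/d+1/2}(\log N)^{1/2}$ and $\abs[0]{I(f) - Q_{X_N}(f)} \lesssim N^{-\beta/d}(\log N)^{1/2}$.

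For the denominator I would invoke two lower bounds. First, the \emph{full} hypothesis $f_0 \in S^\beta(\R^d)$ together with $\supp(f_0) \subset \inte(\Omega)$ is precisely what Theorem~\ref{cor:MLE-upper-lower} requires, yielding the matching lower bound $\sigma_\ML(f,X_N) \gtrsim N^{(\alpha-\beta)/d-1/2}(\log N)^{(1-\alpha/\beta)/2}$. Second, since $\alpha \geq \beta > d/2 \geq \floor{d/2}$, the hypotheses of Theorem~\ref{thm:WCE-exact} are met, so $V_{X_N}^{1/2} \asymp N^{-\alpha/d}$ and $P_{X_N}(x,x)^{1/2} \asymp N^{-\alpha/d+1/2}$ for any $x \notin \bigcup_N X_N$; the latter two $\asymp$ statements give the needed lower bounds on the unscaled posterior widths.

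Combining these is then a bookkeeping of exponents. For the integration score, dividing the numerator bound by $\sigma_\ML(f,X_N) V_{X_N}^{1/2}$ produces a denominator with $N$-exponent $(\alpha-\beta)/d - 1/2 - \alpha/d = -\beta/d - 1/2$, so the powers of $N$ collapse to $N^{1/2}$, while the logarithmic factors combine to $(\log N)^{1/2 - (1-\alpha/\beta)/2} = (\log N)^{\alpha/(2\beta)}$, the claimed rate. The function score is identical once the extra $N^{1/2}$ present in both the numerator bound and the lower bound on $P_{X_N}(x,x)^{1/2}$ is accounted for: the denominator $N$-exponent becomes $(\alpha-\beta)/d - 1/2 - \alpha/d + 1/2 = -\beta/d$, and dividing the numerator $N^{-\beta/d+1/2}$ by $N^{-\beta/d}$ again leaves $N^{1/2}$, with the same logarithmic factor.

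The one step requiring care is the pointwise function claim ``for any $x \in \Omega$'': Theorem~\ref{thm:WCE-exact} guarantees the lower bound $P_{X_N}(x,x)^{1/2} \gtrsim N^{-\alpha/d+1/2}$ only for $x \notin \bigcup_N X_N$. For $x$ that lies in some $X_N$ the posterior width vanishes there, but so does the interpolation error, and under the convention $0/0 = 1$ the score equals $1$, which is trivially $\lesssim N^{1/2}(\log N)^{\alpha/(2\beta)}$; the two regimes thus cover every $x$. I expect the genuine difficulty to have been discharged already in the sharp MLE lower bound of Proposition~\ref{thm:MLE-lower-S} (hence Theorem~\ref{cor:MLE-upper-lower}), since it is precisely this matching lower bound on $\sigma_\ML(f,X_N)$ that prevents the denominator from decaying faster than $N^{-1/2}$ relative to the honest rate. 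Everything after that is the exponent arithmetic above, and the argument for integration is word-for-word the same as for approximation.
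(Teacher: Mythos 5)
Your proposal is correct and follows essentially the same route as the paper's own proof: bound the numerator via Theorem~\ref{thm:sobolev-rates-S}, bound the denominator from below via Proposition~\ref{thm:MLE-lower-S} and Theorem~\ref{thm:WCE-exact}, handle $x \in X_N$ by the $0/0=1$ convention, and collapse the exponents. The arithmetic you carry out matches the paper's displayed computation exactly.
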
 
\begin{proof}
  Consider first approximation with GPs.
  For $N$ such that $x \in X_N$ the standard score in~\eqref{eq:overconfident} and~\eqref{eq:underconfident} is by definition equal to one.
  We can thus assume that $x \notin \bigcup_{N=1}^\infty X_N$.
  Then the estimates in Theorems~\ref{thm:sobolev-rates-S} and~\ref{thm:WCE-exact} and Proposition~\ref{thm:MLE-lower-S} yield
  \begin{equation*}
    \begin{split}
      \frac{\abs[0]{ f(x) - s_{f,X_N}(x)}}{ R_\GP(x,f,X_N)} = \frac{\abs[0]{f(x) - s_{f,X_N}(x)}}{\sigma_\ML(f,X_N) P_{X_N}(x,x)^{1/2} } & \lesssim \frac{ N^{-\beta/d + 1/2} (\log N)^{1/2}}{ N^{(\alpha-\beta)/d - 1/2} (\log N)^{(1-\alpha/\beta)/2} N^{-\alpha/d + 1/2}} \\
      &= N^{1/2} (\log N)^{\alpha/(2\beta)}.
      \end{split}
  \end{equation*}
  The proof for integration is essentially identical.
\end{proof}

Interestingly, the case $\beta = \alpha - d/2$, which essentially corresponds to $f$ having the same regularity as samples from the GP, plays no special role in Theorem~\ref{thm:coverage-sobolev}.
We are uncertain if this is due to an inadequacy in the analysis or if there in fact exist GP samples for which the model is overconfident.
In practice one rarely knows the exact smoothness of $f$ (or the function is not an element of $S^\beta(\Omega)$ for any $\beta$) and can only guess, for example, that $f$ has weak derivatives at least up to some order $\beta$. If $\beta < \alpha$, then nothing can be inferred about the credible sets based on our results; if $\beta \geq \alpha$, then Theorem~\ref{thm:coverage-RKHS} can be used.

As our final result we present a specialisation of Theorem~\ref{thm:underconfidence-RKHS} to Sobolev kernels. The proof is a straightforward application of the estimates in~\eqref{eq:improved-rates}, Proposition~\ref{prop:MLE-general}, and Theorem~\ref{thm:WCE-exact}:
\begin{equation*}
 \sup_{x \in \Omega \setminus X_N} \! \frac{ \abs[0]{ f(x) - s_{f,X_N}(x)}}{ R_\textsm{GP}(x,f,X_N)} \lesssim N^{1/2} \bigg( \int_\Omega P_{X_N}(x,x) \dif x \bigg)^{1/2} \leq N^{1/2} \sup_{x \in \Omega} P_{X_N}(x,x)^{1/2} \asymp N^{-\alpha/d + 1} 
\end{equation*}
if the point sequence is quasi-uniform.
Recall that $T(L^2(\Omega))$ is the range of the integral operator in~\eqref{eq:integral-operator}. 

\begin{theorem}[Asymptotic underconfidence for sufficiently regular functions] \label{thm:underconfidence-sobolev} Suppose that $\Omega \subset \R^d$ is compact, $K$ is a Sobolev kernel of order $\floor{\alpha} > d/2$, and $f \in T(L^2(\Omega))$. Then for a quasi-uniform sequence $(X_N)_{N=1}^\infty \subset \Omega$ such that there is $x^* \in X_N$ for which $f(x^*) \neq 0$ for all sufficiently large $N$, 
  \begin{equation*}
     \sup_{x \in \Omega \setminus X_N} \! \frac{ \abs[0]{ f(x) - s_{f,X_N}(x)}}{ R_\textsm{GP}(x,f,X_N)} \lesssim N^{-\alpha/d + 1}  \quad \text{ and } \quad \frac{\abs[0]{I(f) - Q_{X_N}(f)}}{ R_\textsm{BC}(f,X_N)} \lesssim N^{-\alpha/d + 1}. 
  \end{equation*}
\end{theorem}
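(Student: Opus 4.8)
The plan is to obtain Theorem~\ref{thm:underconfidence-sobolev} as a direct specialisation of the general underconfidence bound in Theorem~\ref{thm:underconfidence-RKHS}, the only genuinely new ingredient being the sharp decay rate of the conditional standard deviation that is available for Sobolev kernels.

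First I would verify that the hypotheses of Theorem~\ref{thm:underconfidence-RKHS} are in force. Compactness of $\Omega$ and the membership $f \in T(L^2(\Omega))$ are assumed outright, and the existence of a common point $x^* \in X_N$ with $f(x^*) \neq 0$ is part of the statement. The only item warranting a remark is continuity of $K$: since $K$ is a Sobolev kernel of order exceeding $d/2$, its RKHS embeds into the continuous functions, so each $K(\cdot, x)$ is continuous, and joint continuity then follows from the reproducing property together with Cauchy--Schwarz. In particular the integral operator $T$ in~\eqref{eq:integral-operator} is well defined. With these in hand, Theorem~\ref{thm:underconfidence-RKHS} supplies
\[
  \sup_{x \in \Omega \setminus X_N} \frac{\abs[0]{f(x) - s_{f,X_N}(x)}}{R_\GP(x,f,X_N)} \lesssim N^{1/2} \norm[0]{P_{X_N}^{1/2}}_{L^2(\Omega)}
\]
and the analogous bound with $R_\BC$ for the integration error.

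The remaining step is to quantify $\norm[0]{P_{X_N}^{1/2}}_{L^2(\Omega)}$ for a Sobolev kernel. Because $\Omega$ is bounded, the crude estimate
\[
  \norm[0]{P_{X_N}^{1/2}}_{L^2(\Omega)} = \bigg( \int_\Omega P_{X_N}(x,x) \dif x \bigg)^{1/2} \leq \abs[0]{\Omega}^{1/2} \sup_{x \in \Omega} P_{X_N}(x,x)^{1/2}
\]
reduces matters to the uniform posterior-variance rate. Invoking Theorem~\ref{thm:WCE-exact}, valid under quasi-uniformity and the order condition $\floor{\alpha} > d/2$, gives $\sup_{x \in \Omega} P_{X_N}(x,x)^{1/2} \asymp N^{-\alpha/d + 1/2}$. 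Substituting this back yields $N^{1/2} \norm[0]{P_{X_N}^{1/2}}_{L^2(\Omega)} \lesssim N^{1/2} N^{-\alpha/d + 1/2} = N^{-\alpha/d + 1}$, which is the claimed rate; the integration case follows verbatim using the $V_{X_N}$ half of Theorem~\ref{thm:underconfidence-RKHS}.

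I do not anticipate a real obstacle, as the result is essentially a corollary: the superconvergence encoded in Theorem~\ref{thm:underconfidence-RKHS} extracts the extra factor $\norm[0]{P_{X_N}^{1/2}}_{L^2(\Omega)}$, and the Sobolev error analysis of Theorem~\ref{thm:WCE-exact} pins down its size. The only points demanding minor care are confirming continuity of the Sobolev kernel so that $T$ and Theorem~\ref{thm:underconfidence-RKHS} are legitimately available, and observing that boundedness of $\Omega$ lets the $L^2(\Omega)$ norm of the posterior standard deviation be controlled by its supremum rather than requiring a separate integral estimate.
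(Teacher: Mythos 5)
Your proposal is correct and follows essentially the same route as the paper: the paper also obtains the result by combining the superconvergence bound of Theorem~\ref{thm:underconfidence-RKHS} (equivalently, the estimates~\eqref{eq:improved-rates} together with Proposition~\ref{prop:MLE-general}) with the bound $\norm[0]{P_{X_N}^{1/2}}_{L^2(\Omega)} \lesssim \sup_{x \in \Omega} P_{X_N}(x,x)^{1/2} \asymp N^{-\alpha/d+1/2}$ from Theorem~\ref{thm:WCE-exact} under quasi-uniformity. Your extra remarks on continuity of $K$ and the explicit $\abs[0]{\Omega}^{1/2}$ factor are harmless refinements of details the paper leaves implicit.
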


We thus have asymptotic underconfidence for approximation and integration of $f \in T(L^2(\Omega))$ at least when $\alpha > d$.
Note that for Sobolev kernels of order $\alpha$ the range $T(L^2(\Omega))$ is related to the Sobolev space of smoothness $2\alpha$~\citep[see, e.g.,][Section~2.3]{TuoWangWu2019}.

Theorem~\ref{thm:underconfidence-sobolev} can be illustrated in detail using the Brownian motion kernel \sloppy{${K(x,y) = \min\{x,y\}}$} on $\Omega = [0,1]$.
Its RKHS consists of functions $f \in W_2^1([0,1])$ such that $f(0) = 0$.
It is well-known that the GP conditional mean for this kernel is the piecewise linear spline interpolant. Furthermore, for the weight $w \equiv 1$ in~\eqref{eq:integral} the Bayesian quadrature estimator is the \emph{trapezoidal rule} if $x_N = 1$ and $f(0) = 0$~\citep[e.g.,][Section~5.5]{Karvonen2019}:
\begin{equation*}
  Q_{X_{N}}(f) = \sum_{n=1}^N \frac{f(x_{n-1}) + f(x_n)}{2} (x_n - x_{n-1}),
\end{equation*}
where the convention $x_0 = 0$ is used.
If the equispaced points $x_n = n/N$ are used, the integral conditional variance~\eqref{eq:BQ-var} has the simple form~\citep[p.\@~26]{Ritter2000}
\begin{equation} \label{eq:trapezoidal-wce}
  V_{X_{N}} = \frac{1}{12} \sum_{n=1}^N (x_n - x_{n-1})^3 = \frac{1}{12 N^2}.
\end{equation}
If $f \colon [0,1] \to \R$ is twice-differentiable with $f''$ bounded and $f(0) = 0$, which means that $f \in W_2^2([0,1])$, the standard error formula for the trapezoidal rule with equispaced points is~\citep[Section~5.1]{Atkinson1989}
\begin{equation} \label{eq:trapezoidal-error}
  I(f) - Q_{X_N}(f) = -\frac{1}{12N^2} f''(\xi_N) \quad \text{ for some } \quad \xi_N \in [0,1].
\end{equation}
Because such a function is in the RKHS, Proposition~\ref{prop:MLE-general} gives $\sigma_\ML(f,X_N) \asymp N^{-1/2}$.
This and the estimates~\eqref{eq:trapezoidal-wce} and~\eqref{eq:trapezoidal-error} thus yield
\begin{equation} \label{eq:trapezoidal-coverage}
  \frac{\abs[0]{I(f) - Q_{X_N}(f)}}{R_\BC(f,X_N)} = \frac{\abs[0]{I(f) - Q_{X_N}(f)}}{\sigma_\ML(f,X_N) V_{X_N}^{1/2}} \leq \frac{\sup_{ x \in [0,1] } \abs[0]{f''(x)}}{\sqrt{12} N \sigma_\ML(f,X_N)} \lesssim N^{-1/2},
\end{equation}
which is the statement of Theorem~\ref{thm:underconfidence-sobolev} with $\alpha = 1$ and $d = 1$.
If we further assume that there is $C > 0$ such that $f''(x) > C$ for all $x \in [0,1]$ (i.e., $f$ is strictly convex), then~\eqref{eq:trapezoidal-error} implies that $\abs[0]{I(f) - Q_{X_N}(f)} \asymp N^{-2}$, and the standard score~\eqref{eq:trapezoidal-coverage} hence has a lower bound of matching order $N^{-1/2}$.


\section{Numerical Illustration} \label{sec:examples}

This section numerically investigates the sharpness of the results in Section~\ref{sec:sobolev-kernels}. 
Examples in Section~\ref{sec:examples-mle} verify that the bounds on $\sigma_\ML(f,X_N)$ in Corollary~\ref{cor:MLE-upper-lower} are valid. 
Section~\ref{sec:example-coverage} contains limited evidence that the bounds in Section~\ref{sec:coverage-sobolev} are not tight: the credible sets do \emph{not} appear to contract with a rate $O(N^{-1/2})$ faster than the true error.

\subsection{Maximum Likelihood Estimation} \label{sec:examples-mle}

In these examples we illustrate the behaviour of the maximum likelihood estimate $\sigma_\ML(f,X_N)$ using the Matérn kernel $K_{\nu,\ell}$ in~\eqref{eq:matern}. Recall that the RKHS of $K_{\nu,\ell}$ is norm-equivalent to the Sobolev space $W_2^\alpha(\R^d)$ with \sloppy{${\alpha = \nu + d/2}$}.
We select $\Omega = [0,1]^d$ and use test functions constructed out of Matérn kernels of smoothness $\eta$:
\begin{equation} \label{eq:matern-test-function}
  f(\b{x}) = \sum_{i=1}^m a_i K_{\eta,\ell}(\b{x},\b{z}_i), \quad \text{ with some } \quad a_i \in \R \quad \text{ and } \quad \b{z}_i \in [0,1]^d.
\end{equation}
By~\eqref{eq:matern-fourier}, the Fourier transform of such a function satisfies $\abs[0]{\widehat{f}(\b{\xi})}^2 \propto (2\eta/\ell^2 + \norm[0]{\xi}^2)^{-(2\eta+d)}$.
The function is thus an element $S^{2\eta+d/2}(\R^d)$ and, except for the support condition $\supp(f) \subset (0,1)^d$, satisfies the assumptions of Corollary~\ref{cor:MLE-upper-lower} with $\beta = 2\eta + d/2$.
For a quasi-uniform point sequence we therefore expect that (possibly up to logarithmic factors)
\begin{equation} \label{eq:theoretical-rate}
  \sigma_\ML(f,X_N) \asymp N^{(\nu-2\eta)_+/d - 1/2} \: \text{ if } \: \nu \geq 2\eta \quad \text{ and } \quad \sigma_\ML(f,X_N) \asymp N^{-1/2} \: \text{ if } \: \nu < 2\eta.
\end{equation}

\begin{figure}[t!]
  \centering
  \includegraphics{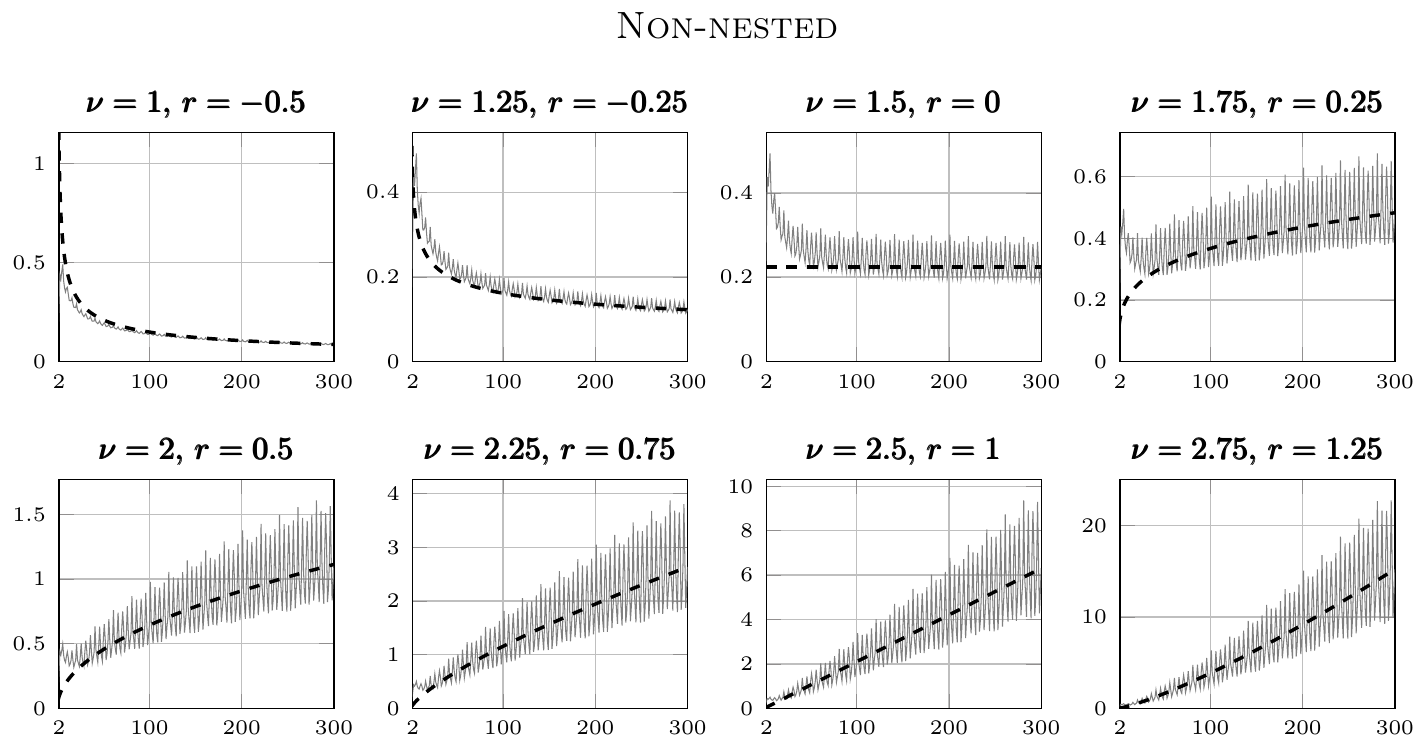}
  \includegraphics{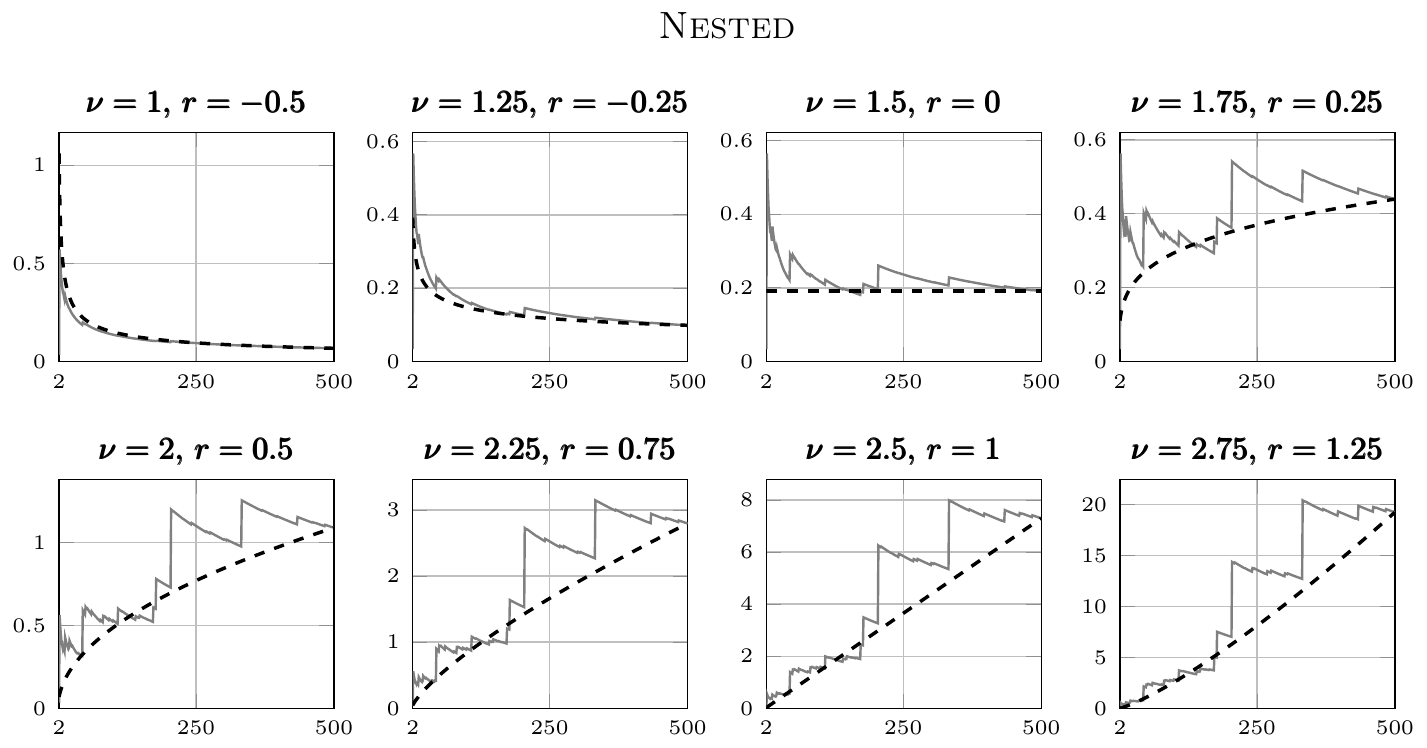}
  \caption{Maximum likelihood estimates $\sigma_\ML(f,X_N)$ (grey) and theoretically predicted rates $N^r$ with $r = \nu-3/2$ (dashed black) when $d=1$, as a function of the size $N$ of the point set. \emph{Above:} non-nested uniform point sets~\eqref{eq:XN-uniform} for $N=2,\ldots,300$. \emph{Below:} nested van der Corput points~\eqref{eq:van-der-corput} for $N=2,\ldots,500$. The function $f$ is of the form~\eqref{eq:matern-test-function} with $\eta = 0.5$. The GP covariance kernel is a Matérn~\eqref{eq:matern} with smoothness~$\nu$.} \label{fig:MLE-1D}
\end{figure}

\paragraph{MLE when $d=1$} In the first example we set $d = 1$, $\ell=0.2$, $\eta=0.5$, $m=3$, $(a_1,a_2,a_3) = (1,0.5,0.2)$, and $(z_1, z_2, z_3) = (0.2, 0.55, 0.78)$.
Figure~\ref{fig:MLE-1D} displays the behaviour of the maximum likelihood estimates and the predicted theoretical rates~\eqref{eq:theoretical-rate} for different values of the smoothness parameter $\nu$ of the Matérn kernel. On the first and second row of Figure~\ref{fig:MLE-1D} the point sets are the non-nested uniform grids
\begin{equation} \label{eq:XN-uniform}
  X_N = \bigg\{ 0, \frac{1}{N-1}, \ldots, \frac{N-2}{N-1}, 1 \bigg\}
\end{equation}
with fill-distances $h_{X_N} = 1/(N-1)$.
The maximum likelihood estimates exhibit wild oscillations which seem to be related to placement of the evaluation points in relation to the points $z_i$ defining $f$. Nevertheless, it is clear that the rates predicted by~\eqref{eq:theoretical-rate} are realised in all cases. On the third and fourth row of Figure~\ref{fig:MLE-1D} the point sets are nested: $X_N$ consists of the first $N$ elements of the low-discrepancy van der Corput sequence
\begin{equation} \label{eq:van-der-corput}
  0, \: 0.5, \: 0.75, \: 0.125, \: 0.625, \: 0.375, \: 0.875, \: \ldots
\end{equation}
Because the fill-distances and separation radii of these sets are not equal, the maximum likelihood estimates exhibit sudden increases intercepted by periods of decay contributed by the $N^{-1/2}$ term in~\eqref{eq:MLE-upper-general} and~\eqref{eq:MLE-lower-general}.
However, overall behaviour of $\sigma_\ML(f,X_N)$ appears to be compatible with the rate~\eqref{eq:theoretical-rate}.
Even though the plots are seemingly similar, $\sigma_\ML(f,X_N)$ grows much faster for larger $\nu$ as attested by changing $y$-scaling of the figures.

\paragraph{MLE when $d=2$} In the second example we set $d = 2$, $\ell = 0.8$, $\eta = 0.75$, $m=3$, $(a_1,a_2,a_3) = (1,0.5,0.2)$, and $(z_1,z_2,z_3) = ((0.1,0.1), (0.5,0.1), (0.725,0.565))$.
The results are displayed in Figure~\ref{fig:MLE-2d}.
The point sets are now Cartesian products of the point sets used in the previous one-dimensional example.
The maximum likelihood estimates again appear to behave as predicted by~\eqref{eq:theoretical-rate}.

\begin{figure}[t!]
  \centering
  \includegraphics{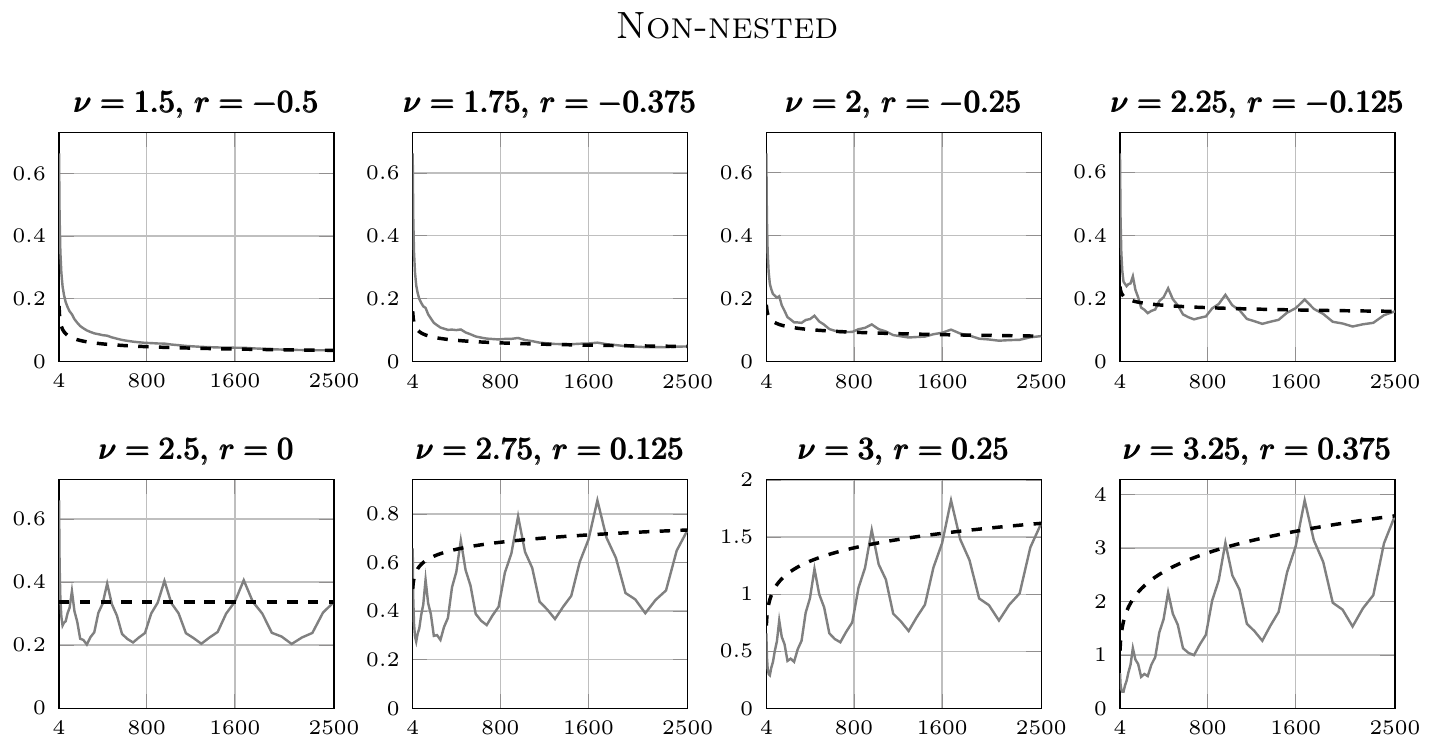}
  \includegraphics{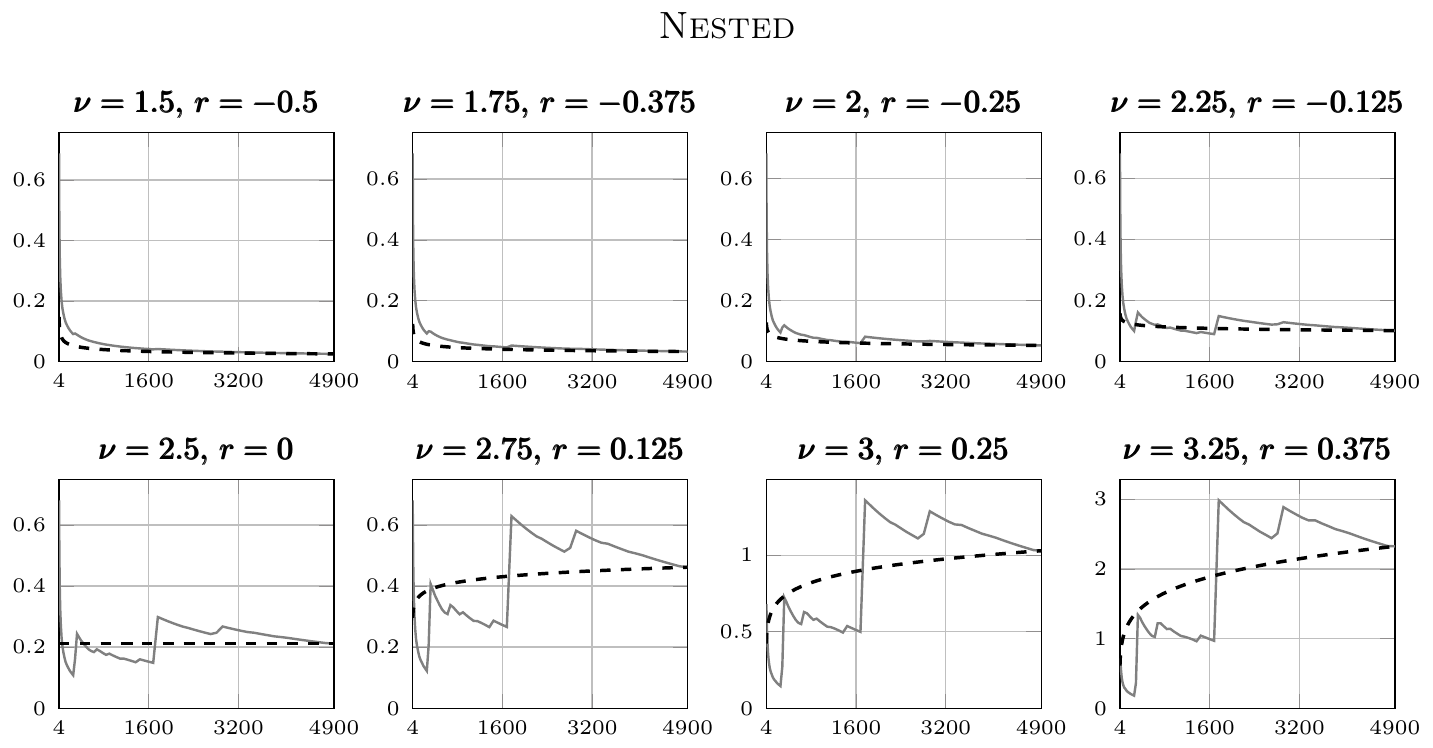}
  \caption{Maximum likelihood estimates $\sigma_\ML(f,X_N)$ (grey) and theoretically predicted rates $N^r$ with $r = \nu/2 - 5/4$ (dashed black) when $d=2$, as a function of the size $N$ of the point set. \emph{Above:} Cartesian products of non-nested uniform point sets~\eqref{eq:XN-uniform} for $N=2^2,\ldots,50^2$. \emph{Below:} nested van der Corput points~\eqref{eq:van-der-corput} for $N=2^2,\ldots,70^2$. The function $f$ is of the form~\eqref{eq:matern-test-function} with $\eta = 0.75$. The GP covariance kernel is a Matérn~\eqref{eq:matern} with smoothness~$\nu$.} \label{fig:MLE-2d}
\end{figure}

\subsection{Credible Sets} \label{sec:example-coverage}

The uncertainty quantification provided by GPs, as measured by the true function or its integral being contained in the credible sets~\eqref{eq:credible-sets}, has been empirically assessed by various authors in a number of problems of varying character~\citep{KarvonenOates2018,Briol2019,Rathinavel2019}.
The theoretical results that we report may help to explain such empirical results previously observed.

In this example we study asymptotic overconfidence and underconfidence on $\Omega = [0,1] \subset \R$ using the released once integrated Brownian motion kernel
\begin{equation} \label{eq:BM1-kernel}
  K(x,y) = 1 + xy + \frac{1}{3} \min\{x,y\}^3 + \frac{1}{2} \abs[0]{x-y} \min\{x,y\}^2
\end{equation}
whose RKHS is $W_2^2([0,1])$ (in parlance of Section~\ref{sec:sobolev-kernels}, $\alpha=2$).
The term $1+xy$ ``releases'' the standard integrated Brownian motion by removing the requirement that $f(0) = f'(0) = 0$.
See~\citet[Section~10]{VaartZanten2008} and \citet[Section~2.2.3]{Karvonen2019} for details about integrated Brownian motion kernels.
For simplicity we only consider Bayesian quadrature for the computation of unweighted Lebesgue integrals on $[0,1]$.
Our integrands are of the form~\eqref{eq:matern-test-function} with fixed $m=3$, $\ell = 0.7$, $(a_1,a_2,a_3) = (1,2,0.5)$, and $(z_1,z_2,z_3) = (0.125,0.5,0.75)$.
Six different smoothness parameters are used: $\eta = n/4$ for $n=1,\ldots,6$.
As described in Section~\ref{sec:examples-mle}, this implies that the integrands are elements of $S^\beta([0,1])$ with $\beta = 2\eta + 1/2 = (n+1)/2$ for $n=1,\ldots,6$.
For each $N \geq 1$ the point set $X_N$ consists of the $N$ first elements in the van der Corput sequence~\eqref{eq:van-der-corput}.

The results are depicted in Figure~\ref{fig:coverage}.
In the right hand panel we see that asymptotic overconfidence appears to occur when $f$ is less smooth than the RKHS ($\eta = 0.25$ and $\eta = 0.50$), though it is not clear with which rate this happens.
When $\eta = 0.75$, which corresponds to $f$ being on the ``boundary'' of the RKHS, credible sets appear to be either slowly asymptotically overconfident or asymptotically honest.
When $f \in W_2^2([0,1])$ ($\eta = 1.00$ and $\eta = 1.25$) the GP model appears to be asymptotically honest.
Note that in all cases the relevant theoretical result, Theorem~\ref{thm:coverage-sobolev}, only guarantees overconfidence, if it happens, cannot happen too fast in that $\abs[0]{I(f)-Q_{X_N}(f)} / R_\BC(f,X_N) = O( N^{1/2} (\log N)^{1/\beta} )$.

\begin{figure}[t!]
  \centering
  \includegraphics{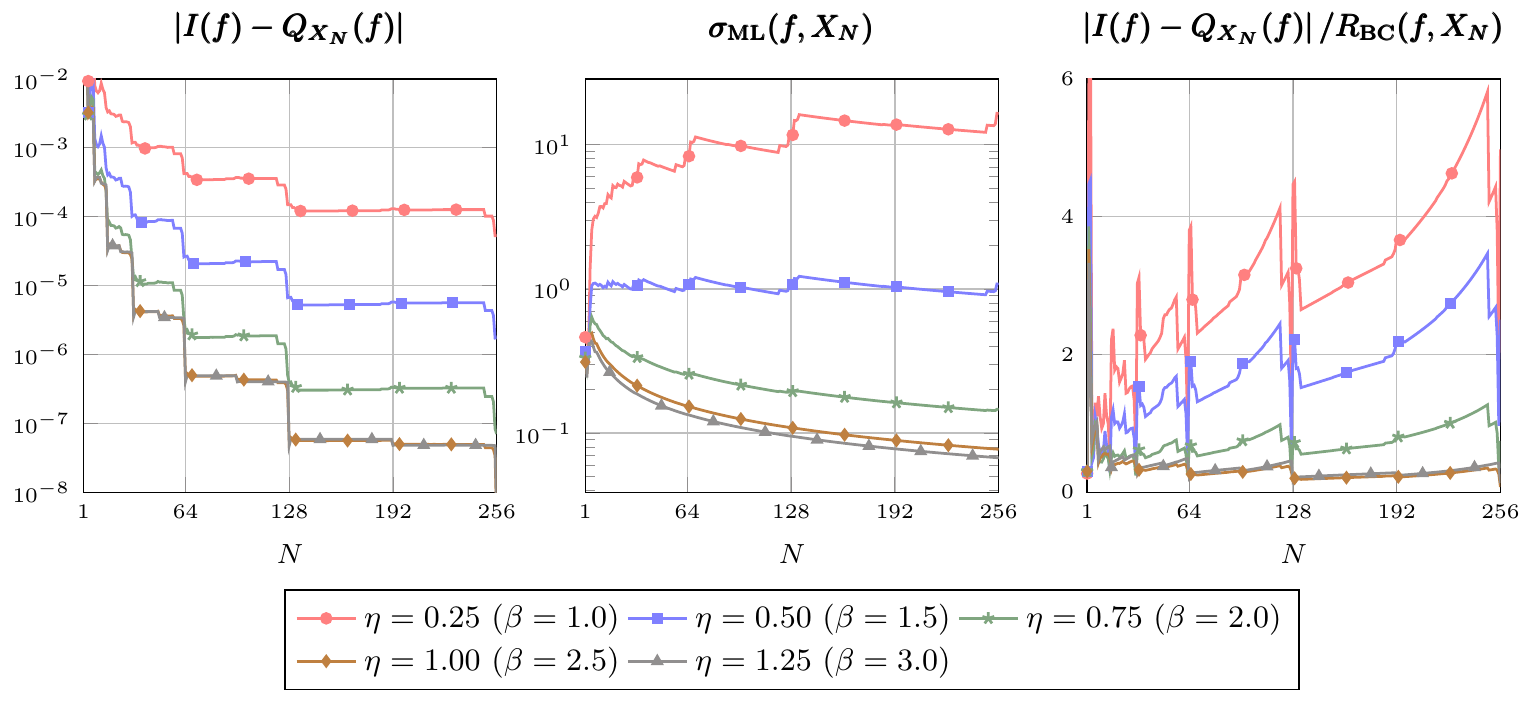}
  \caption{Behaviour of the integration error (\emph{left}), maximum likelihood estimate of the kernel scale parameter (\emph{centre}), and the standard score in~\eqref{eq:overconfident} and~\eqref{eq:underconfident} (\emph{right}) for a function $f$ of the form~\eqref{eq:matern-test-function} with $\eta = n/4$, $n=1,\ldots,6$, corresponding to $f \in S^\beta([0,1])$ for $\beta=(n+1)/2$, $n=1,\ldots,6$. The functions are evaluated at the nested van der Corput points~\eqref{eq:van-der-corput} for $N=1,\ldots,256$. The GP covariance kernel is the released integrated Brownian motion kernel~\eqref{eq:BM1-kernel} whose RKHS is $W_2^2([0,1])$.} \label{fig:coverage}
\end{figure}


\section{Conclusion} \label{sec: discussion}

In this article we analysed the asymptotic behaviour, as the number of data points grows, of maximum likelihood estimates of the scale parameter of a GP in the context of approximation and integration of a function that is exactly observed.
The results on maximum likelihood estimation were then used to show that in some settings the GP model can become at worst ``slowly'' overconfident.

Similar analysis of other common kernel parameters, in particular the length-scale parameter of a stationary kernel, and their effect on uncertainty quantification would be a logical next step.
Some work exists in the setting of the white noise model~\citep{Szabo2015,HadjiSzabo2019}.
However, such analysis is greatly complicated by the lack of a closed-form expression like~\eqref{eq:MLE-definition} for more general maximum likelihood estimators.
For the length-scale parameter there is some evidence that its MLE can converge to a constant $\ell_\infty \in (0,\infty)$ even when the GP model is misspecified~\citep{Karvonen-MLSP2019}.
Although proper selection of this parameter is often a prerequisite for an accurate and meaningful uncertainty quantification when $N$ is small, it would follow that the parameter has no effect on asymptotic overconfidence or underconfidence of the model.
\cite{Teckentrup2019} has recently proved approximation error bounds for GPs over compact sets of kernel parameters by assuming that the associated RKHS norm-equivalence constants can be uniformly bounded.
If the existence of a limit $\ell_\infty$ could be established, it is likely that results in \cite{Teckentrup2019} could be leveraged to extend the results in Section~\ref{sec:coverage-sobolev} to simultaneous maximum likelihood estimation of the scale and length-scale parameters.

Ther are also other popular approaches to kernel parameter selection. In \emph{marginalisation}, or \emph{full Bayes}, the scale parameter is treated as random and assigned an improper prior with density $p(\sigma^2) \propto 1/\sigma^2$ before being marginalised out~\citep[see][]{MacKay1996}.
If $N \geq 3$, the conditional process becomes a Student's $t$ process with~$N$ degrees of freedom, whose mean function is still $s_{f,X}$ but whose covariance function is now $(f_X^\T K_X^{-1} f_X/(N-2))  P_X(x,y)$.
The Student's $t$ distribution converges to a Gaussian when its degrees of freedom increases, which implies that the resulting posterior is indistinguishable from the one obtained using maximum likelihood in the large $N$ limit.
As a consequence, the asymptotic results of this article apply equally to the case where $\sigma$ is marginalised.
\emph{Cross-validation} offers more possibilities, both rooted~\citep{Fong2019} and not rooted~\citep[e.g.,][Section 2.2.3]{Rathinavel2019} in the GP model, to some of which our results on maximum likelihood may be relevant.
An empirical investigation on cross-validation has been performed in \citet{Bachoc2013}.

\section*{Acknowledgements}

We thank Gabriele Santin for pointing out some useful references.

\appendix

\section{Proofs for Sections~\ref{sec:Sobolev-rates} and~\ref{sec:MLE-Sobolev}} \label{sec:proofs}

This appendix contains proofs for the results in Sections~\ref{sec:Sobolev-rates} and~\ref{sec:MLE-Sobolev}.
Unlike in the statements of the results, here various constants are tracked carefully because these constants need to be controlled in the proofs of Theorem~\ref{thm:sobolev-rates-S} and Propositions~\ref{thm:MLE-upper-S} and~\ref{thm:MLE-lower-S}.


\begin{lemma} \label{lemma:bernstein} Suppose that $\alpha \geq \beta > d/2$ and $\Omega \subset \R^d$ satisfies Assumption~\ref{ass:Omega}. If $f \in W_2^\beta(\Omega)$ and $X \subset \Omega$ is a finite set of points, then there is $f_{\beta} \in W_2^\alpha(\Omega)$ such that $f_{\beta}|_X = f|_X$,
  \begin{equation} \label{eq:bernstein-statement}
    \norm[0]{f - f_{\beta}}_{W_2^\beta(\Omega)} \leq 5 \norm[0]{f}_{W_2^\beta(\Omega)}, \quad \text{ and } \quad \norm[0]{ f_{\beta} }_{W_2^\alpha(\Omega)} \leq C_\beta q_X^{\beta-\alpha} \norm[0]{ f }_{W_2^\beta(\Omega)},
  \end{equation}
  where the constant $C_\beta > 0$ does not depend on $f$ or $X$ and varies continuously with $\beta$.
\end{lemma}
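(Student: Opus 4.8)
The label \texttt{bernstein} is apposite: the claim is a Bernstein-type statement to the effect that data sampled on $X$ from a function of modest smoothness $\beta$ can be reproduced exactly by a function of any higher smoothness $\alpha$, at the unavoidable price of a factor $q_X^{\beta-\alpha} \ge 1$ governed by the separation radius. My plan is to build $f_\beta$ explicitly as a low-pass (Fourier-truncated) approximation of $f$, plus a small scattered-data correction that restores exact interpolation on $X$; the order $q_X^{\beta-\alpha}$ then emerges from two Bernstein/inverse estimates at the spatial scale $q_X$.

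First I would pass to the whole space. Because $W_2^\beta(\Omega)$ is defined as the restriction space, its norm is the infimum of extension norms, so for any $\varepsilon > 0$ I may fix an extension $f_0 \in W_2^\beta(\R^d)$ with $\norm[0]{f_0}_{W_2^\beta(\R^d)} \le (1+\varepsilon)\norm[0]{f}_{W_2^\beta(\Omega)}$; crucially this avoids any extension-operator constant. Fix a bandwidth $\sigma \asymp q_X^{-1}$ and let $g$ be the Fourier truncation of $f_0$ to $\{\xi : \norm[0]{\xi} \le \sigma\}$. Since $g$ is band-limited it lies in $W_2^\alpha(\R^d)$, and the elementary inequality $(1+\norm[0]{\xi}^2)^\alpha \le (1+\sigma^2)^{\alpha-\beta}(1+\norm[0]{\xi}^2)^\beta$ on $\supp \widehat{g}$ yields the Bernstein bound $\norm[0]{g}_{W_2^\alpha(\R^d)} \le (1+\sigma^2)^{(\alpha-\beta)/2}\norm[0]{f_0}_{W_2^\beta(\R^d)} \asymp q_X^{\beta-\alpha}\norm[0]{f}_{W_2^\beta(\Omega)}$; moreover truncation only decreases the $\beta$-norm, so $\norm[0]{f_0 - g}_{W_2^\beta(\R^d)} \le \norm[0]{f_0}_{W_2^\beta(\R^d)}$ and $\norm[0]{f_0-g}_{L^2(\R^d)} \le \sigma^{-\beta}\norm[0]{f_0}_{W_2^\beta(\R^d)}$.

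The function $g$ does not interpolate $f$ on $X$, so I would add a correction $r$ with $r|_X = (f_0-g)|_X \eqdef e$. As the points of $X$ are $2q_X$-separated, the balls of radius $q_X$ about them are pairwise disjoint, and I would take $r = \sum_i e_i \psi((\,\cdot\, - x_i)/q_X)$ for a fixed Schwartz bump $\psi$ with $\psi(0)=1$ supported in the unit ball, so that $r|_X = e$ and $r \in W_2^\alpha(\R^d)$. A rescaling computation gives $\norm[0]{r}_{W_2^\tau(\R^d)} \lesssim q_X^{d/2-\tau}\norm[0]{e}_{\ell^2}$ for $\tau \in \{\beta,\alpha\}$, while a sampling (Marcinkiewicz--Zygmund) inequality on the disjoint balls, $\sum_i \abs[0]{u(x_i)}^2 \lesssim q_X^{-d}\norm[0]{u}_{L^2(\R^d)}^2 + q_X^{2\beta-d}\abs[0]{u}_{W_2^\beta(\R^d)}^2$ applied to $u = f_0-g$, yields $\norm[0]{e}_{\ell^2} \lesssim q_X^{\beta-d/2}\norm[0]{f_0}_{W_2^\beta(\R^d)}$. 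Substituting gives $\norm[0]{r}_{W_2^\alpha(\R^d)} \lesssim q_X^{\beta-\alpha}\norm[0]{f}_{W_2^\beta(\Omega)}$ and $\norm[0]{r}_{W_2^\beta(\R^d)} \le C_r(1+\varepsilon)\norm[0]{f}_{W_2^\beta(\Omega)}$. Setting $f_\beta \defeq (g+r)|_\Omega$, the interpolation identity $f_\beta|_X = g|_X + e = f_0|_X = f|_X$ holds, and restriction to $\Omega$ together with the displays above gives $\norm[0]{f_\beta}_{W_2^\alpha(\Omega)} \lesssim q_X^{\beta-\alpha}\norm[0]{f}_{W_2^\beta(\Omega)}$ and $\norm[0]{f-f_\beta}_{W_2^\beta(\Omega)} \le \norm[0]{f_0-g}_{W_2^\beta(\R^d)} + \norm[0]{r}_{W_2^\beta(\R^d)} \le (1+\varepsilon)(1+C_r)\norm[0]{f}_{W_2^\beta(\Omega)}$; after a careful choice of the free bump profile $\psi$ and tracking of the remaining fixed constants, this secures the explicit value $5$.

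The main obstacle is the correction estimate for non-integer $\alpha$: the Gagliardo seminorm of $r$ does not split over the disjoint bump supports, so the bound $\norm[0]{r}_{W_2^\alpha(\R^d)} \lesssim q_X^{d/2-\alpha}\norm[0]{e}_{\ell^2}$ requires controlling the nonlocal interactions between well-separated bumps (equivalently, a stable-frame estimate for the trigonometric sum $\sum_i e_i \e^{-\i x_i^\T \xi}$ weighted by $\abs[0]{\widehat{\psi}(q_X\xi)}^2$). This, together with the sampling inequality on an arbitrary $q_X$-separated set, is exactly the scattered-data machinery of \citet{Narcowich2006} and \citet{WendlandRieger2005}, and is where the real work lies. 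The remaining requirement, that $C_\beta$ depend continuously on $\beta$, follows by inspection: the Bernstein factor is explicit, and the sampling and bump constants inherit continuity in $\beta$ from the Sobolev-embedding and Bramble--Hilbert constants on a fixed reference ball.
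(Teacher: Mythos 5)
Your construction is genuinely different from the paper's. The paper does not build the interpolant by hand: it invokes Theorem~3.4 of \citet{Narcowich2006}, which directly supplies a \emph{band-limited} $f_\beta$ with bandwidth $\sigma = \kappa_\beta q_X^{-1}$ satisfying $f_\beta|_X = f|_X$ and $\norm[0]{f_0 - f_\beta}_{W_2^\beta(\R^d)} \leq 5 \norm[0]{f_0}_{W_2^\beta(\R^d)}$, and then obtains the $W_2^\alpha$ bound by a single application of the Bernstein inequality~\eqref{eq:bernstein-intermediate} to the band-limited $f_\beta$ itself. You instead take $f_\beta = g + r$ with $g$ a Fourier truncation and $r$ a sum of rescaled bumps; this forces you to estimate $\norm[0]{r}_{W_2^\alpha}$ separately, which is precisely the fractional-order, non-local bump-sum estimate you flag as the main obstacle, together with a sampling inequality on a $q_X$-separated set. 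Both of these are the substance of the scattered-data results you would otherwise cite, so your route does not avoid the machinery of \citet{Narcowich2006} and \citet{WendlandRieger2005}; it re-derives it in a slightly different packaging. The outline is sound for proving the statement with \emph{some} absolute constant in place of $5$, and that would suffice for every downstream use of the lemma in this paper.

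There is, however, a concrete gap at the constant $5$. Your final bound is $(1+\varepsilon)(1+C_r)\norm[0]{f}_{W_2^\beta(\Omega)}$, where $C_r$ is the product of the constant in the scaled sampling inequality and the constant in the bump-sum estimate, and you assert that a careful choice of $\psi$ secures $C_r \leq 4$. This cannot be arranged. The normalisation $\psi(0)=1$ forces $\norm[0]{\psi}_{W_2^\beta(\R^d)} \gtrsim 1$ by the Sobolev embedding, and the sampling-inequality constant blows up as $\beta \downarrow d/2$; more importantly, the term $q_X^{2\beta - d} \abs[0]{f_0 - g}_{W_2^\beta(\R^d)}^2$ in your bound on $\norm[0]{e}_{\ell^2}^2$ is controlled only by $q_X^{2\beta-d}\norm[0]{f_0}_{W_2^\beta(\R^d)}^2$ --- the high-frequency tail of $f_0$ tends to zero for fixed $f_0$ but not uniformly over the unit ball of $W_2^\beta(\R^d)$ --- so enlarging the bandwidth constant $\kappa$ does not shrink $C_r$. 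This is exactly the feature that the Narcowich--Ward--Wendland construction has and yours lacks: their correction is itself band-limited and is produced by a norming-set argument in the Paley--Wiener space, whose operator norm \emph{can} be made arbitrarily small by enlarging $\kappa_\beta$, which is where the explicit $5$ (and, via the proof of their Lemma~3.3, the continuity of $\kappa_\beta$ in $\beta$ that the paper needs for Theorem~\ref{thm:sobolev-rates-S}) actually comes from. Your claim that continuity of $C_\beta$ in $\beta$ ``follows by inspection'' likewise needs the same careful tracking through the embedding and cross-term constants; it is plausible but not free.
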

\begin{proof}
For any band-limited $f_\sigma$ with band-limit $\sigma \geq 1$ we have
  \begin{equation} \label{eq:bernstein-intermediate}
    \begin{split} 
      \norm[0]{f_\sigma}_{W_2^\alpha(\R^d)}^2 &= \int_{\norm[0]{\xi} \leq \sigma} \big(1 + \norm[0]{\xi}^2\big)^\alpha \abs[0]{\widehat{f}_\sigma(\xi)}^2 \dif \xi \\
      &\leq (1+\sigma^2)^{\alpha-\beta} \int_{\norm[0]{\xi} \leq \sigma} \big(1 + \norm[0]{\xi}^2 \big)^\beta \abs[0]{\widehat{f}_\sigma(\xi)}^2 \dif \xi \\
      &= (1+\sigma^2)^{\alpha-\beta} \norm[0]{f_\sigma}_{W_2^\beta(\R^d)}^2 \\
      &\leq 2^{\alpha-\beta} \sigma^{2(\alpha-\beta)} \norm[0]{f_\sigma}_{W_2^\beta(\R^d)}^2.
      \end{split}
  \end{equation}
  Let $f_0 \in W_2^\beta(\R^d)$ be any extension of $f$. By Theorem~3.4 in \cite{Narcowich2006} there exists $f_{\beta} \in W_2^\alpha(\R^d)$ with band-width $\sigma = \kappa_{\beta} \, q_X^{-1}$ such that $f_{\beta}|_X = f|_X$ and
  \begin{equation} \label{eq:narcowich-5}
    \norm[0]{f_0 - f_{\beta}}_{W_2^\beta(\R^d)} \leq 5 \norm[0]{f_0}_{W_2^\beta(\R^d)}.
  \end{equation}
  The constant $\kappa_\beta > 0$ depends only on $d$ and $\beta$ and can be selected such that $\sigma=\kappa_\beta q_X^{-1} \geq 1$ for any points $X \subset \Omega$.
  That it varies continuously with~$\beta$ is ascertained by observing that, according to the proof of Lemma~3.3 in \citet{Narcowich2006}, it is a continuous combination of the constants $C_{\beta,d} = \Phi_\beta(0) + \sum_{n=1}^\infty 3d (n+2)^{d-1} \Phi_\beta(n)$, where $\Phi_\beta(x) = (1+x^2)^{-\beta}$ for $x \in \R$, and $c_{\beta,d}$, given in \citet[Theorem~12.3]{Wendland2005}, which are continuous functions of $\beta$.
  The second claim follows from~\eqref{eq:bernstein-intermediate} and~\eqref{eq:narcowich-5}:
  \begin{equation*}
    \begin{split}
      \norm[0]{f_{\beta}}_{W_2^\alpha(\R^d)} \leq 2^{(\alpha-\beta)/2} \sigma^{\alpha-\beta} \norm[0]{f_{\beta}}_{W_2^\beta(\R^d)} &\leq 2^{(\alpha-\beta)/2} \sigma^{\alpha-\beta} \big( \norm[0]{f_0}_{W_2^\beta(\R^d)} + \norm[0]{f_0 - f_{\beta}}_{W_2^\beta(\R^d)} \big)\\
      &\leq 6 \times 2^{(\alpha-\beta)/2} \kappa_\beta^{\alpha-\beta} q_X^{\beta-\alpha} \norm[0]{ f_0 }_{W_2^\beta(\R^d)}.
      \end{split}
  \end{equation*}
  That is, $C_\beta = 6 \times 2^{(\alpha-\beta)/2} \kappa_\beta^{\alpha-\beta}$.
  The Sobolev norms over $\R^d$ in the inequalities can be replaced with norms over $\Omega$ because the inequalities are valid for any extension of $f$.
\end{proof}

\begin{theorem}[{\citealt[Theorem~2.6]{WendlandRieger2005}}] \label{thm:wendland-rieger} 
  Let $\alpha \geq \beta$, $\floor{\beta} > d/2$, and $p \in [1,\infty]$. Suppose that $\Omega \subset \R^d$ satisfies Assumption~\ref{ass:Omega} and $K$ is a Sobolev kernel of order $\alpha$. If $f \in W_2^\beta(\Omega)$, then there are constants $C_\beta, h_{0,\beta} > 0$ such that
  \begin{equation*}
    \norm[0]{f - s_{f,X}}_{L^p(\Omega)} \leq C_\beta h_X^{\beta-d(1/2-1/p)_+} \norm[0]{f-s_{f,X}}_{W_2^\beta(\Omega)}
  \end{equation*}
  whenever $h_X \leq h_{0,\beta}$, where $(x)_+ \coloneqq \max\{x,0\}$.
  The constant $C_\beta$ depends on $d$, $\Omega$, $p$, and $\floor{\beta}$ and $h_{0,\beta}$ on $d$, $\Omega$, and $\floor{\beta}$.
\end{theorem}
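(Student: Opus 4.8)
The plan is to reduce the claim to a \emph{sampling inequality} and then apply it to the single function $f - s_{f,X}$. Two preliminary observations make this reduction: since $s_{f,X}$ interpolates $f$ on $X$, the function $u \defeq f - s_{f,X}$ vanishes on $X$; and since $s_{f,X} \in \mathcal{H}_K(\Omega)$, which by norm equivalence equals $W_2^\alpha(\Omega) \subseteq W_2^\beta(\Omega)$ because $\alpha \geq \beta$, we have $u \in W_2^\beta(\Omega)$. It therefore suffices to prove that every $u \in W_2^\beta(\Omega)$ with $u|_X = 0$ satisfies $\norm[0]{u}_{L^p(\Omega)} \lesssim h_X^{\beta - d(1/2-1/p)_+} \, \abs[0]{u}_{W_2^\beta(\Omega)}$ once $h_X \leq h_{0,\beta}$, where $\abs[0]{\cdot}_{W_2^\beta(\Omega)}$ is the order-$\beta$ Sobolev seminorm; bounding this seminorm by the full norm then yields the stated inequality.

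The central tool is a \emph{local polynomial reproduction} of degree $m = \floor{\beta}$: under Assumption~\ref{ass:Omega} (in particular the interior cone condition) and for $h_X$ small enough, there exist coefficient functions $u_i(x)$ that vanish unless $\norm[0]{x - x_i} \leq C_1 h_X$, satisfy $\sum_i \abs[0]{u_i(x)} \leq C_2$ uniformly in $x$, and reproduce $\sum_i u_i(x) p(x_i) = p(x)$ for every polynomial $p$ of degree at most $m$. The cone condition and the smallness of $h_X$ enter exactly here. Fixing $x \in \Omega$, I would take a region $B$ of diameter $\asymp h_X$ around $x$ on which a Bramble--Hilbert estimate applies (the cone condition guarantees such regions exist uniformly, including near $\partial\Omega$) and let $p$ be a degree-$m$ polynomial best approximating $u$ on $B$. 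Using $u|_X = 0$ and reproduction,
\begin{equation*}
  u(x) = \big[ u(x) - p(x) \big] + \sum_i u_i(x) \big[ p(x_i) - u(x_i) \big],
\end{equation*}
and every term is dominated by $\norm[0]{u - p}_{L^\infty(B)}$. A scaled Bramble--Hilbert estimate together with the embedding $W_2^\beta(B) \hookrightarrow L^\infty(B)$ (valid since $\beta > d/2$) bounds this by $C \, h_X^{\beta - d/2} \abs[0]{u}_{W_2^\beta(B)}$, the power arising as the product of the $h_X^{\beta}$ from the seminorm estimate and the $h_X^{-d/2}$ from the embedding constant under dilation. Taking the supremum over $x$ handles $p = \infty$.

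For $p = 2$ the same local decomposition, integrated against a covering of $\Omega$ with bounded overlap, yields a ``zeros lemma'' $\norm[0]{u}_{L^2(\Omega)} \lesssim h_X^{\beta} \abs[0]{u}_{W_2^\beta(\Omega)}$. The range $2 < p < \infty$ then follows from the interpolation inequality $\norm[0]{u}_{L^p} \leq \norm[0]{u}_{L^2}^{2/p} \norm[0]{u}_{L^\infty}^{1 - 2/p}$, which produces precisely the exponent $\beta - d(1/2 - 1/p)$, while $1 \leq p < 2$ follows from the inclusion $L^2(\Omega) \hookrightarrow L^p(\Omega)$ on the bounded domain~$\Omega$; combining the four ranges gives $\beta - d(1/2-1/p)_+$.

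The main obstacle is the treatment of \emph{fractional} smoothness $\beta \notin \N$: the Bramble--Hilbert estimate and the $h_X^{\beta-d/2}$ scaling must be established for fractional Sobolev seminorms (via real interpolation or the intrinsic Gagliardo seminorm), and in the $p=2$ step one must control the non-additivity of the fractional seminorm over the covering so that the local contributions still sum back to $\abs[0]{u}_{W_2^\beta(\Omega)}$ up to a constant. A secondary but, for the later results, essential point is to check that the constants $C_\beta$ and $h_{0,\beta}$ depend on $\beta$ only through $\floor{\beta}$ and continuously, since Theorem~\ref{thm:sobolev-rates-S} and Proposition~\ref{thm:MLE-upper-S} require uniform control of these constants as $\beta$ ranges over an interval.
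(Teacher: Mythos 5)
Your proposal is correct in outline, but it takes a genuinely different route from the paper: the paper does not prove this statement from first principles at all---its entire proof consists of specialising parameters ($k=\floor{\beta}$, $s=\beta-\floor{\beta}$, $u=f-s_{f,X}$, and so on) in Theorem~2.6 of Wendland and Rieger (2005), whose hypotheses are met precisely because $f - s_{f,X}$ vanishes on $X$. What you have written is essentially a reconstruction of the proof of that cited sampling inequality: the reduction to a function with scattered zeros, local polynomial reproduction under the interior cone condition, a scaled Bramble--Hilbert estimate combined with the embedding $W_2^\beta \hookrightarrow L^\infty$ (where $\floor{\beta} > d/2$ enters), and $L^2$--$L^\infty$ interpolation to produce the exponent $\beta - d(1/2-1/p)_+$; this is the standard argument of Narcowich--Ward--Wendland and Wendland--Rieger, and your exponent bookkeeping in the interpolation step is right. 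The two technical burdens you flag---the fractional-order Bramble--Hilbert lemma and the non-additivity of the fractional seminorm over a bounded-overlap covering---are exactly the nontrivial content of the cited theorem, which the paper sidesteps by citation. What the citation additionally buys, and what the paper genuinely relies on later (in the proofs of Theorem~\ref{thm:sobolev-rates-S} and Proposition~\ref{thm:MLE-upper-S}), is that the constants $C_\beta$ and $h_{0,\beta}$ depend on $\beta$ only through $\floor{\beta}$ and therefore take finitely many values as $\beta$ ranges over a bounded interval; your self-contained route would need to re-derive this dependence explicitly, which you correctly identify as essential rather than cosmetic.
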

\begin{proof}
The result as stated here follows by setting $k=\floor{\beta}$, $s=\beta-\floor{\beta}$, $p=2$, $q=p$, $m=0$, and $u=f-s_{f,X} \in W_2^\beta(\Omega)$ in Theorem~2.6 of \citet{WendlandRieger2005}.
\end{proof}

\begin{proof}[Proof of Theorem~\ref{thm:sobolev-rates}]

Theorem~\ref{thm:wendland-rieger} with $\beta = \alpha$ and $p=2$, the norm-equivalence~\eqref{eq:norm-equivalence}, and $\norm[0]{g-s_{g,X}}_{\mathcal{H}_K(\Omega)} \leq \norm[0]{g}_{\mathcal{H}_K(\Omega)}$ for any $g \in \mathcal{H}_K(\Omega)$ give $\norm[0]{g - s_{g,X}}_{W_2^\alpha(\Omega)} \leq C_K^{-1} C_K' \norm[0]{g}_{W_2^\alpha(\Omega)}$ and, if $h_X \leq h_{0,\alpha}$,
  \begin{equation*}
    \norm[0]{g - s_{g,X}}_{L^2(\Omega)} \leq C_\alpha h_X^\alpha \norm[0]{g - s_{g,X}}_{W_2^\alpha(\Omega)} \leq C_K^{-1} C_K' C_\alpha h_X^\alpha \norm[0]{g}_{W_2^\alpha(\Omega)}.
  \end{equation*}
Lemma~2.1 in \citet{Narcowich2006} therefore holds with the mapping $Tg = g-s_{g,X}$ and constants $\tau = \alpha$, $C_1 = C_K^{-1} C_K' C_\alpha h_X^\alpha$, and $C_2 = C_K^{-1} C_K'$.
It follows that
\begin{equation*}
  \norm[0]{T g}_{W_2^\beta(\Omega)} = \norm[0]{g - s_{g,X}}_{W_2^\beta(\Omega)} \leq C_1^{1-\beta/\alpha} C_2^{\beta/\alpha} \norm[0]{g}_{W_2^\alpha(\Omega)} = C_K^{-1} C_K' C_\alpha^{1-\beta/\alpha} h_X^{\alpha-\beta} \norm[0]{g}_{W_2^\alpha(\Omega)}
\end{equation*}
for $g \in W_2^\beta(\Omega)$ and $h_X \leq h_{0,\alpha}$.
Select now $g$ as the function $f_{\beta} \in W_2^\alpha(\Omega)$ in Lemma~\ref{lemma:bernstein} and let $C_\beta'$ be the constant in~\eqref{eq:bernstein-statement}.
Then, exploiting the fact that $f_{\beta}|_X = f|_X$ and thus $s_{f_{\beta},X} = s_{f,X}$,
  \begin{equation*}
    \begin{split}
      \norm[0]{f-s_{f,X}}_{W_2^\beta(\Omega)} &\leq \norm[0]{f-f_{\beta}}_{W_2^\beta(\Omega)} + \norm[0]{f_{\beta} - s_{f_{\beta},X}}_{W_2^\beta(\Omega)} + \norm[0]{s_{f_{\beta},X} - s_{f,X}}_{W_2^\beta(\Omega)} \\
      &\leq 5 \norm[0]{f}_{W_2^\beta(\Omega)} + C_K^{-1} C_K' C_\alpha^{1-\beta/\alpha} h_X^{\alpha-\beta} \norm[0]{f_{\beta}}_{W_2^\alpha(\Omega)} \\
      &\leq \big(5 + C_K^{-1} C_K' C_\alpha^{1-\beta/\alpha} C_\beta' \rho_X^{\alpha-\beta} \big) \norm[0]{f}_{W_2^\beta(\Omega)}.
    \end{split}
  \end{equation*}
Since the mesh ratio satisfies $\rho_X \geq 1$, we can write this as
  $\norm[0]{f-s_{f,X}}_{W_2^\beta(\Omega)} \leq C_\beta^* \rho_X^{\alpha-\beta} \norm[0]{f}_{W_2^\beta(\Omega)}$
for a constant $C_\beta^* > 0$ varying continuously with $\beta$.
Finally, Theorem~\ref{thm:wendland-rieger} yields
  \begin{equation} \label{eq:sobolev-rates-with-constant}
    \begin{split}
      \norm[0]{f-s_{f,X}}_{L^p(\Omega)} &\leq C_\beta h_X^{\beta-d(1/2-1/p)_+} \norm[0]{f-s_{f,X}}_{W_2^\beta(\Omega)} \\
      &\leq C_\beta C_\beta^* h_X^{\beta-d(1/2-1/p)_+} \rho_X^{\alpha-\beta} \norm[0]{f}_{W_2^\beta(\Omega)} \\
      \end{split}
  \end{equation}
  if $h_X \leq \min\{h_{0,\alpha}, h_{0,\beta}\}$.
  The claims of Theorem~\ref{thm:sobolev-rates} follow from the above inequality with $p=1$ and $p=\infty$, the inequality $\abs[0]{I(f) - Q_X(f)} \leq \sup_{x \in \Omega} w(x) \norm[0]{f-s_{f,X}}_{L^1(\Omega)}$ ($w$ is the weight function from Section~\ref{subsec: BC}), and that $\rho_{X_N}$ is bounded for quasi-uniform $(X_N)_{N=1}^\infty \subset \Omega$.
\end{proof}

Note that for any bounded set $B \subset [\ceil{d/2}, \infty)$ the constants related to~\eqref{eq:sobolev-rates-with-constant} satisfy
\begin{equation} \label{eq:well-behaved-constants}
  \sup_{ \beta \in B} C_\beta C_\beta^* < \infty \quad \text{ and } \quad \inf_{ \beta \in B} \min\{h_{0,\alpha},h_{0,\beta}\} > 0
\end{equation}
because, as remarked in the proof, $C_\beta^*$ is a continuous function of $\beta$ and $C_\beta$ and $h_{0,\beta}$, which are the constants in Theorem~\ref{thm:wendland-rieger}, depend on $\floor{\beta}$ and can thus take only a finite number of distinct values for $\beta \in B$.
This observation is used in the next proof.

\begin{proof}[Proof of Theorem~\ref{thm:sobolev-rates-S}] 
  The proof is based on the fact that $S_-^\beta(\R^d) \subset W_2^\gamma(\R^d)$ for every $\beta > \gamma$ and given here only for approximation. Let $f_0 \in S_-^\beta(\R^d) \cap W_2^\gamma(\R^d)$ be an extension of $f \in S_-^\beta(\Omega)$. For a quasi-uniform sequence $(X_N)_{N=1}^\infty \subset \Omega$ it follows from~\eqref{eq:sobolev-rates-with-constant} and~\eqref{eq:well-behaved-constants} that
  \begin{equation} \label{eq:sobolev-error-proof}
    \sup_{ x \in \Omega} \, \abs[0]{f(x) - s_{f,X_N}(x)} \leq C N^{-\gamma/d + 1/2} \norm[0]{f}_{W_2^\gamma(\Omega)} \leq C N^{-\gamma/d + 1/2} \norm[0]{f_0}_{W_2^\gamma(\R^d)}
  \end{equation}
  for every $\gamma \in B \coloneqq [\ceil{d/2}, \beta)$ and all $N \geq N_{0}$, where
  \begin{equation*}
    C \coloneqq \sup_{\gamma \in B} C_\gamma C_\gamma^* < \infty \quad \text{ and } \quad N_0 \coloneqq \bigg( C_\textsm{qu} \inf_{\gamma \in B} \min\{h_{0,\alpha}, h_{0,\gamma}\} \bigg)^{-d}
  \end{equation*}
are independent of $\gamma$ and $C_\textsm{qu} > 0$ is a constant such that $C_\textsm{qu}^{-1} N^{-1/d} \leq h_{X_N} \leq C_\textsm{qu} N^{-1/d}$ for all $N \geq 1$ (the existence of which follows from quasi-uniformity).
Set $\gamma = \gamma_N \coloneqq \beta - 1/\log N \to \beta$.
Because $f_0 \in S_-^\beta(\R^d)$, a spherical coordinate transform gives, with constants $C_1, C_2 > 0$ that depend on $\gamma$ and $\beta$, $d$, and $f_0$ and remain bounded away from zero and infinity as $\gamma \to \beta$,
\begin{equation*}
  \begin{split}
    \norm[0]{f_0}_{W_2^{\gamma}(\R^d)}^2 = \int_{\R^d} \big( 1 + \norm[0]{\xi}^2 \big)^\gamma \abs[0]{\widehat{f}_0(\xi)}^2 \dif \xi \leq C_1 \int_{\norm[0]{\xi} \geq 1} \norm[0]{\xi}^{2(\gamma-\beta) - d} \dif \xi &= C_1 C_2 \int_1^\infty r^{2(\gamma-\beta)-1} \dif r \\
    &= \frac{C_1 C_2}{2(\beta-\gamma)}.
  \end{split}
\end{equation*}
By inserting $\gamma = \gamma_N = \beta - 1/ \log N$ into~\eqref{eq:sobolev-error-proof} and exploiting the estimate above we thus get
\begin{equation*}
  \begin{split}
    \sup_{ x \in \Omega} \, \abs[0]{f(x) - s_{f,X_N}(x)} \lesssim N^{-\gamma_N/d + 1/2} \norm[0]{f}_{W_2^{\gamma_N}(\R^d)} &\lesssim N^{-\beta/d+1/2} N^{1/(d \log N)} (\log N)^{1/2} \\
    &= \e^{1/d} N^{-\beta/d - 1/2} (\log N)^{1/2},
  \end{split}
\end{equation*}
as claimed.
\end{proof}

\begin{proof}[Proof of Theorem~\ref{thm:WCE-exact}]

  The rates $\sup_{ x \in \Omega} P_{X_N}(x,x)^{1/2} \asymp N^{-\alpha/d + 1/2}$ and $V_{X_N}^{1/2} \asymp N^{-\alpha/d}$ follow the worst-case interpretation~\eqref{eq:WCE} of the standard deviations, Theorem~\ref{thm:sobolev-rates} with $\beta = \alpha$, and standard results on fundamental lower bounds for the rate of convergence of approximation and integration algorithms in Sobolev spaces, which can be found in \citet[Sections~1.3.11 and~1.3.12]{Novak1988}; \citet[Section~1.2, Chapter~VI]{Ritter2000}; and \citet[Section~4.2.4]{NovakWozniakowski2008}. We are left to prove the lower bound $P_{X_N}(x,x)^{1/2} \gtrsim N^{-\alpha/d + 1/2}$ for fixed $x$. Although this lower bound is more or less standard \citep[e.g.,][]{Schaback1995}, we have not found the exact version given here in the literature.
  
  By~\eqref{eq:WCE} the conditional standard deviation has the worst-case interpretation
  \begin{equation*}
    P_{X_N}(x,x)^{1/2} = \sup_{ \norm[0]{g}_{\mathcal{H}_K(\Omega)} \leq 1} \abs[0]{ g(x) - s_{g,X_N}(x)}.
  \end{equation*}
  If there is $g \in \mathcal{H}_K(\Omega)$ such that $g|_{X_N} \equiv 0$ it follows that $P_{X_N}(x,x)^{1/2} \geq \abs[0]{g(x)} \norm[0]{g}_{\mathcal{H}_K(\Omega)}^{-1}$ because in this case $s_{g,X_N} \equiv 0$.
  We follow the proof of Theorem~1 in \citet{DeMarchiSchaback2010}, a standard bump function argument, to construct this function.
  Let $\phi \colon \R^d \to \R$ be an infinitely smooth bump function that is supported on the unit ball and satisfies $\sup_{x \in \R^d} \phi(x) = \phi(0) = 1$.
  Let $\delta_{x,X_N} \coloneqq \min_{i=1,\ldots,N} \norm[0]{x-x_i}$ be the distance between $x \in \Omega$ and $X_N \subset \Omega$.
  Define $\phi_x \coloneqq \phi(\cdot - x)$ and $g_x \coloneqq \phi_x(\cdot / \delta_{x,X_N} ) \in W_2^\alpha(\Omega)$, which satisfies $g_x|_{X_N} \equiv 0$.
  Using the properties of the Fourier transform, a change of variables, and the fact that $\delta_{x,X_N} \leq h_{X_N} \leq 1$ for all sufficiently large $N$ due to quasi-uniformity we get
  \begin{equation*}
    \begin{split}
      \norm[0]{g_x}_{W_2^\alpha(\Omega)}^2 \leq \delta_{x,X_N}^{2d} \int_{\R^d} \big(1 + \norm[0]{\xi}^2 \big)^{\alpha} \abs[0]{\widehat{\phi}_x( \delta_{x,X_N} \xi)}^2 \dif \xi &= \delta_{x,X_N}^{d} \int_{\R^d} \bigg(1 + \frac{\norm[0]{\xi}^2}{\delta_{x,X_N}^2} \bigg)^{\alpha} \abs[0]{\widehat{\phi}_x(\xi)}^2 \dif \xi \\
      &\leq \delta_{x,X_N}^{d - 2\alpha} \int_{\R^d} \big(1+\norm[0]{\xi}^2 \big)^{\alpha} \abs[0]{\widehat{\phi}(\xi)}^2 \dif \xi \\
      &= \delta_{x,X_N}^{d-2\alpha} \norm[0]{\phi}_{W_2^\alpha(\R^d)}^2.
      \end{split}
  \end{equation*}
  By norm-equivalence we thus have $\norm[0]{g_x}_{\mathcal{H}_K(\Omega)} \leq C \delta_{x,X_N}^{-\alpha+d/2}$ for a constant $C > 0$ which is independent of $x$ and $X_N$.
  Therefore
  \begin{equation} \label{eq:PX-lower-in-proof}
    P_{X_N}(x,x)^{1/2} \geq C^{-1} \delta_{x,X_N}^{\alpha-d/2}.    
  \end{equation}
  
  Because the point set sequence is quasi-uniform, there is a constant $C_\textsm{qu} > 0$ such that $C_\textsm{qu}^{-1} N^{-1/d} \leq q_{X_N}$ for all $N \geq 1$.
  If $\delta_{x,X_N} \geq q_{X_N}$, then $\delta_{x,X_N} \geq c_1 N^{-1/d}$ and~\eqref{eq:PX-lower-in-proof} gives
  \begin{equation} \label{eq:PXN-lower-in-proof}
    P_{X_N}(x,x)^{1/2} \geq C^{-1} \delta_{x,X_N}^{\alpha-d/2} \geq C^{-1} c_1^{-\alpha+d/2} N^{-\alpha/d+1/2},
  \end{equation}
  the claimed lower bound.
  Assume thus that $\delta_{x,X_N} \leq q_{X_N}$.
  Let $x^* \in X_N$ be a point closest to $x \notin X_N$. Then $2 q_{X_N} \leq \norm[0]{x^* - x} + \norm[0]{x - x'} = \delta_{x,X_N} + \norm[0]{x-x'}$ for any $x' \in X_N \setminus \{x^*\}$.
  If~$N$ is such that $\delta_{x,X_N} < \delta_{x,X_{N-1}}$ there is $x'$ such that $\norm[0]{x-x'} = \delta_{x,X_{N-1}}$.
  For such~$N$ we thus have $2q_{X_N} \leq \delta_{x,X_N} + \delta_{x,X_{N-1}}$, which together with $\delta_{x,X_N} \geq q_{X_N}$ and quasi-uniformity yields
  \begin{equation*}
    \delta_{x,X_{N-1}} \geq C_\textsm{qu}^{-1} N^{-1/d} \geq 2^{-1/d} C_\textsm{qu}^{-1} (N-1)^{-1/d}
  \end{equation*}
  for $N \geq 2$. Again, a lower bound of the form~\eqref{eq:PXN-lower-in-proof} thus holds.
  This completes the proof.
\end{proof}


\begin{proof}[Proof of Proposition~\ref{thm:MLE-upper}]
  For any distinct points $X \subset \Omega$ Lemma~\ref{lemma:bernstein}, $s_{f,X} = s_{f_{\beta},X}$, the minimum-norm property of the GP conditional mean, and the norm-equivalence~\eqref{eq:norm-equivalence} yield
  \begin{equation*}
    \norm[0]{s_{f,X}}_{\mathcal{H}_K(\Omega)} = \norm[0]{s_{f_{\beta},X}}_{\mathcal{H}_K(\Omega)} \leq \norm[0]{f_{\beta}}_{\mathcal{H}_K(\Omega)} \leq C_K' \norm[0]{f_{\beta}}_{W_2^\alpha(\Omega)} \leq C_K' C_\beta q_{X}^{\beta-\alpha} \norm[0]{f}_{W_2^\beta(\Omega)}.
  \end{equation*}
  The claims follow from~\eqref{eq:MLE} and the fact that $q_{X_N} \gtrsim N^{-1/d}$ for quasi-uniform points.
\end{proof}

\begin{proof}[Proof of Proposition~\ref{thm:MLE-upper-S}]
The proof is similar to that of Theorem~\ref{thm:sobolev-rates-S}. The use of Theorem~\ref{thm:sobolev-rates} is replaced with Proposition~\ref{thm:MLE-upper} which implies that
\begin{equation*}
  \sigma_\ML(f,X_N) \leq C_{\gamma} \, N^{(\alpha-\gamma)/d-1/2} \norm[0]{f_0}_{W_2^\gamma(\R^d)},
\end{equation*}
where $f_0 \in S_-^\beta(\R^d) \cap W_2^\gamma(\R^d)$ is an extension of $f$ and $C_\gamma$ again satisfies $\sup_{\gamma \in [\ceil{d/2},\beta)} C_\gamma < \infty$.
\end{proof}


\begin{proof}[Proof of Proposition~\ref{thm:MLE-lower}] 
This proof is adapted from the proof of Theorem~8 in \cite{VaartZanten2011}.
Let $X \subset \Omega$ be any distinct points. By Theorem~\ref{thm:sobolev-rates} there are $C_\gamma, h_{0,\gamma} > 0$ such that for $h_{X} \leq h_{0,\gamma}$,
  \begin{equation} \label{eq:epsilonX}
    \norm[0]{f-s_{f,X}}_{L^{2}(\Omega)} \leq C_{\gamma} h_{X}^{\gamma}\rho_{X}^{\alpha-\gamma}\norm[0]{f_0}_{W^{\gamma}_{2}(\R^d)} \eqqcolon \varepsilon_X.
  \end{equation}
  In the proof of Theorem~\ref{thm:sobolev-rates-S} it is shown that $C_\gamma$ and $h_{0,\gamma}$ are bounded away from zero and infinity if $\gamma$ remains in a bounded interval.
  Because the support of $f_0$ is compact and contained in the interior of $\Omega$, there is a non-negative bump function $\phi \colon \R^d \to \R$ such that $\phi|_{\supp(f_0)} \equiv 1$, $\phi|_{\R^d \setminus \inte(\Omega)} \equiv 0$, $\sup_{ x \in \Omega} \phi(x) = 1$, and $\abs[0]{\widehat{\phi}(\b{\xi})\e^{\norm[0]{\b{\xi}}^{u}}} \to 0$ as $\norm[0]{\b{\xi}} \to \infty$ for some $u > 0$.
  Let $s_{f,X,0} \in W_2^\alpha(\R^d)$ be an extension of $s_{f,X} \in W_2^\alpha(\Omega)$.
  By Parseval's identity and $f_0 = f_0 \phi$,
\begin{equation} \label{eq:epsilon-convolution}
  \norm[0]{\widehat{f_0} - \widehat{s}_{f,X,0}*\widehat{\phi}}_{L^{2}(\mathbb{R}^{d})} = \norm[0]{f_0 - s_{f,X,0}\phi}_{L^{2}(\mathbb{R}^{d})} \leq \norm[0]{f - s_{f,X}}_{L^{2}(\Omega)} \leq \varepsilon_X.
\end{equation}
For $R > 0$ let $\mathbbm{1}_{R}^\textsf{c}$ be the indicator function of the set $\Set{\b{x}\in\R^{d}}{ \norm[0]{\b{x}} > R}$. Because $f_0 \in S_+^\beta(\R^d)$, for sufficiently large $R$ we have
\begin{equation*}
    \norm[0]{\widehat{f_0} \mathbbm{1}_{2R}^\textsf{c}}_{L^{2}(\mathbb{R}^{d})}^2 = \int_{\norm[0]{\xi} > 2R} \abs[0]{\widehat{f_0}(\xi)}^2 \dif \xi \geq C_f \int_{\norm[0]{\xi} > 2R} \norm[0]{\xi}^{-2\beta-d} \dif \xi \geq C_f \widetilde{C} R^{-2\beta} \eqqcolon C_1 R^{-2\beta},
\end{equation*}
where $C_f > 0$ depends on $f_0$ and $\widetilde{C} > 0$ on $\beta$ and $d$.
The reverse triangle inequality and~\eqref{eq:epsilon-convolution} thus yield
\begin{equation*}
  \norm[0]{(\widehat{s}_{f,X,0}*\widehat{\phi})\mathbbm{1}_{2R}^\textsf{c}}_{L^{2}(\mathbb{R}^{d})} \geq \norm[0]{\widehat{f_0} \mathbbm{1}_{2R}^\textsf{c}}_{L^{2}(\mathbb{R}^{d})} - \norm[0]{( \widehat{f_0} - \widehat{s}_{f,X,0}*\widehat{\phi}) \mathbbm{1}_{2R}^\textsf{c}}_{L^{2}(\mathbb{R}^{d})} \geq C_{1} R^{-\beta}-\varepsilon_X.
\end{equation*}
By Lemma~16 in \cite{VaartZanten2011},
\begin{equation} \label{eq:lemma16}
  \norm[0]{\widehat{s}_{f,X,0}\mathbbm{1}_{R}^\textsf{c}}_{L^{2}(\mathbb{R}^{d})} \norm[0]{\widehat{\phi}(1-\mathbbm{1}_{R}^\textsf{c})}_{L^{1}(\mathbb{R}^{d})}\geq C_{1} R^{-\beta} -\varepsilon_X - \norm[0]{\widehat{s}_{f,X,0}}_{L^{2}(\mathbb{R}^{d})} \norm[0]{\widehat{\phi} \mathbbm{1}_{R}^\textsf{c}}_{L^{1}(\mathbb{R}^{d})}.
\end{equation}
Let $m(\b{\xi}) \defeq (1+\norm[0]{\b{\xi}}^{2})^{\alpha/2}$ so that $\norm[0]{s_{f,X,0}}_{W^{\alpha}_{2}(\R^{d})} = \norm[0]{\widehat{s}_{f,X,0}m}_{L^{2}(\R^{d})}$. We engage in slight abuse of notation by also writing $m(r) = (1+r^{2})^{\alpha/2}$ for $r \in \R$. By the definition of Sobolev norm,
\begin{align*}
  \norm[0]{\widehat{s}_{f,X,0}\mathbbm{1}_{R}^\textsf{c}}_{L^{2}(\mathbb{R}^{d})} = \norm[0]{\widehat{s}_{f,X,0}m \mathbbm{1}_{R}^\textsf{c} m^{-1}}_{L^{2}(\mathbb{R}^{d})} \leq m(R)^{-1}\norm[0]{s_{f,X,0}}_{W^{\alpha}_{2}(\mathbb{R}^{d})}
\end{align*}
and $\norm[0]{\widehat{s}_{f,X,0}}_{L^{2}(\R^{d})} \leq \norm[0]{s_{f,X,0}}_{W^{\alpha}_{2}(\R^{d})}$. Set $R = C_1^{1/\beta} (2\varepsilon_X)^{-1/\beta}$ and use these estimates to rearrange~\eqref{eq:lemma16} as
\begin{equation*}
  \Big[ m(R)^{-1}\norm[0]{\widehat{\phi}(1-\mathbbm{1}_{R}^\textsf{c})}_{L^{1}(\mathbb{R}^{d})} + \norm[0]{\widehat{\phi}\mathbbm{1}_{R}^\textsf{c}}_{L^{1}(\mathbb{R}^{d})} \Big] \norm[0]{s_{f,X,0}}_{W^{\alpha}_{2}(\mathbb{R}^{d})} \geq C_{1}R^{-\beta} -\varepsilon_X = \varepsilon_X.
\end{equation*}
By construction, $\widehat{\phi}\in L^{1}(\mathbb{R}^{d})$ and $\norm[0]{\widehat{\phi}\mathbbm{1}_{R}^\textsf{c}}_{L^{1}(\mathbb{R}^{d})} \leq C_{2} \e^{-dR^{u}}$ for some constant $C_{2} > 0$. Let $C_{3} > 0$ be a constant such that $C_2 \e^{-dr^{u}} \leq C_3 m(r)^{-1}$ for all $r \geq 0$. Then
\begin{align*}
  \norm[0]{s_{f,X,0}}_{W^{\alpha}_{2}(\mathbb{R}^{d})} & \geq \varepsilon_X \Big[ m(R)^{-1}\norm[0]{\widehat{\phi}(1-\mathbbm{1}_{R}^\textsf{c})}_{L^{1}(\mathbb{R}^{d})} + \norm[0]{\widehat{\phi}\mathbbm{1}_{R}^\textsf{c}}_{L^{1}(\mathbb{R}^{d})} \Big]^{-1} \\
  & \geq \varepsilon_X \Big[ m(R)^{-1} \norm[0]{\widehat{\phi}}_{L^{1}(\mathbb{R}^{d})} + C_{2}e^{-dR^{u}} \Big]^{-1} \\
  & \geq \varepsilon_X \Big[ m(R)^{-1} \big( \norm[0]{\widehat{\phi}}_{L^{1}(\mathbb{R}^{d})}+C_{3} \big) \Big]^{-1} \\
  &\geq C_4 \varepsilon_X^{1-\alpha/\beta},
\end{align*}
where $C_4 = 2^{-1/\beta} C_1^{1/\beta} (\norm[0]{\widehat{\phi}}_{L^{1}(\mathbb{R}^{d})}+C_{3})^{-1}$ does not depend on $\gamma$.
The definition of $\varepsilon_X$ in~\eqref{eq:epsilonX} therefore gives
\begin{align*}
  \norm[0]{s_{f,X,0}}_{W^{\alpha}_{2}(\mathbb{R}^{d})} &\geq C_4 \big( C_\gamma h_{X}^{\gamma}\rho_{X}^{\alpha-\gamma}\norm[0]{f_0}_{W^{\gamma}_{2}(\R^d)} \big)^{1-\alpha/\beta} \\
  &= C_4 C_\gamma^{1-\alpha/\beta} h_X^{\gamma(1-\alpha/\beta)} \rho_X^{-(\alpha-\gamma)(\alpha-\beta)/\beta} \norm[0]{f_0}_{W^\gamma_2(\R^d)}^{1-\alpha/\beta}. 
\end{align*}
If $(X_{N})_{N=1}^\infty$ is a quasi-uniform sequence, $\rho_{X_N}$ remains bounded and $h_{X_N} \gtrsim N^{-1/d}$. Thus
\begin{align*}
  \norm[0]{s_{f,X_N,0}}_{W^{\alpha}_{2}(\mathbb{R}^{d})} \gtrsim N^{\gamma(\alpha/\beta-1)/d}\norm[0]{f_0}_{W^{\gamma}_{2}(\R^d)}^{1-\alpha/\beta}.
\end{align*}
The claims now follow from the norm-equivalence of $\mathcal{H}_K(\Omega)$ and $W_2^\alpha(\Omega)$, the Sobolev extension theorem~\citep[Theorem~1.4.3.1]{Grisvald1985}, and~\eqref{eq:MLE}.
\end{proof}

\begin{proof}[Proof of Proposition~\ref{thm:MLE-lower-S}]

Let $\gamma_N = \beta - 1/\log N$. The arguments in the proof of Theorem~\ref{thm:MLE-upper-S}, the Sobolev extension theorem, and~\eqref{eq:MLE-lower-uniform} yield
\begin{equation*}
  \begin{split}
  \sigma_\ML(f,X_N) &\gtrsim N^{\gamma_N(\alpha/\beta-1)/d - 1/2}\norm[0]{f_0}_{W^{\gamma_N}_{2}(\R^d)}^{1-\alpha/\beta} \\ 
  &\gtrsim N^{(\alpha-\beta)/d-1/2} N^{(1-\alpha/\beta)/(d \log N)} (\log N)^{(1-\alpha/\beta)/2} \\
  &= \e^{(1-\alpha/\beta)/d} N^{(\alpha-\beta)/d-1/2} (\log N)^{(1-\alpha/\beta)/2},
  \end{split}
\end{equation*}
which is the claimed bound.
\end{proof}



\end{document}